\documentclass[12pt,twoside,reqno]{amsart}

\usepackage{amssymb}
\usepackage{xcolor}
\usepackage[final]{hyperref}
\usepackage[pdftex]{pict2e}

\hypersetup{unicode= false, colorlinks=true, linkcolor=blue,
	anchorcolor=blue, citecolor=green, filecolor=red, menucolor=blue,
	pagecolor=blue, urlcolor=blue}

\def\const{\text{\rm const}}

\def\PW{\text{\rm PW}}
\def\ONP{\text{\rm ONP}}

\def\supp{\text{\rm supp}\,}

\def\ti{\tilde}

\def\bs{\bigskip}
\def\ss{\smallskip}
\def\ms{\medskip}
\def\no{\noindent}

\def\PP{{\mathcal{P}}}

\def\PP{{\mathcal{P}}}

\def\BB{{\mathcal B}}

\def\CC{{\mathcal{C}}}

\def\FF{{\mathcal F}}
\def\SS{{\Sigma}}
\def\R{{\mathbb R}}
\def\T{{\mathbb T}}
\def\HH{{\mathcal H}}

\def\WW{{\mathcal W}}
\def\BB{{\mathcal{B}}}
\def\KK{{\mathcal{K}}}
\def\CC{{\mathcal{C}}}

\def\D{{\mathbb{D}}}
\def\C{{\mathbb{C}}}
\def\R{{\mathbb{R}}}

\def\T{{\mathbb{T}}}
\def\Z{{\mathbb{Z}}}

\def\si{{ \text{\rm  sinc} }}

\def\l{\lambda}
\def\L{\Lambda}
\def\a{\alpha}
\def\b{\beta}
\def\d{\delta}
\def\e{\varepsilon}
\def\s{\sigma}

\theoremstyle{plain}
\newtheorem{lem}{Lemma}[section]
\newtheorem{thm}{Theorem}[section]
\newtheorem{cor}{Corollary}[section]

\newtheorem{prop}{Proposition}[section]
\theoremstyle{remark}
\newtheorem{example}
{{\noindent\it Example}}[section]

\newtheorem{remark}
{{\noindent\it Remark}}[section]
\newtheorem{remarks}
{{\noindent\it Remarks}}[section]

\numberwithin{equation}{section}

\setlength{\parindent}{0.05cm}
\setlength{\parskip}{0.7em}

\title{Etudes for the inverse spectral problem}

\begin{document}

	\author{N.~Makarov}
	\thanks{The first author is supported by
		N.S.F. Grant DMS-1500821}
	\address{California Institute of Technology\\
		Department of Mathematics\\
		Pasadena, CA 91125, USA}
	\email{makarov@its.caltech.edu}
	
	\author{A.~Poltoratski}
	\address{University of Wisconsin\\ Department of Mathematics\\ Van Vleck Hall\\
		480 Lincoln Drive\\
		Madison, WI  53706\\ USA }
	\email{poltoratski@wisc.edu}
	\thanks{The second author is supported by
		NSF Grant DMS-1954085.}
	
	\thanks{The work on Section 7 in the fall of 2021 was  supported by the Ministry of Science and Higher Education of the Russian Federation, agreement No 075-15-2021-602.}
	
\begin{abstract}
In this note we study inverse spectral problems for canonical Hamiltonian systems, which
encompass a broad class of second order differential equations on a half-line. Our goal is to extend the classical results developed in the work of Marchenko, Gelfand-Levitan, and Krein to broader classes of canonical systems and to illustrate the solution algorithms and formulas with a variety of examples. One of the main ingredients of our approach is the use of truncated
Toeplitz operators, which complement the standard toolbox of the Krein-de Branges theory of canonical systems. 
\end{abstract}

\maketitle

\section{Introduction} 

The main object of this note is a canonical Hamiltonian system of differential equations on a half-line. Such systems were studied around the middle of the last century by M. G. Krein, who was first to find multiple connections between 
spectral problems for such systems and structural problems in certain spaces of entire functions. The rich complex analytic content of Krein's theory was further developed in the work of L. de Branges. At present, the Krein-de Branges (KdB) theory became a standard tool in the area of spectral problems. The theory experiences a new peak of popularity in the last 20 years due to its connections to other areas  such as the theory of orthogonal polynomials, number theory, random matrices and
the non-linear Fourier transform (\cite{Denisov, Burnol, Lagarias1, Lagarias2,  Scatter, Benedek}).  In addition to the original book by de Branges \cite{dB} and
a chapter in the book by Dym and McKean \cite{DM}, more recent monographs
by C. Remling \cite{Rem} and R. Romanov \cite{Rom} contain the basics of
KdB-theory and further references.

A {\it regular} half-line canonical (Hamiltonian) system (CS) is the equation
\begin{equation}
	\Omega\dot X=z\HH X\qquad {\rm on}\quad [0,\infty).\label{eq001}
\end{equation}
Here the Hamiltonian $\HH=\HH(t)$ is  a  given $2\times 2$ matrix-function satisfying
$$\HH\in L^1_{\rm loc}[0,\infty),\quad \HH\ne 0\quad {\rm a.e.},\quad \HH\ge 0\quad {\rm a.e.}$$
The first relation means that the entries of $\HH$ are integrable on each finite interval. Systems satisfying this condition are called {\it regular}. 
The matrix $\Omega$ in \eqref{eq001} is the symplectic matrix $$\Omega=\begin{pmatrix} 0&1\\-1&0\end{pmatrix},$$ and $z\in\C$ is the 'spectral parameter'. The unknown function 
$X=X(t,z)$ is a two-dimensional vector-function on $[0,\infty)$.

Several important classes of second order differential equations, including
Schr\"odinger and Sturm-Liouville equations, Dirac and Zaharov-Shabat systems, string equations and
orthogonal polynomials can be rewritten as CS, see for instance
\cite{Dym, Rem1, Rem} or \cite{Rom}, 
 which makes
a study of such systems even more general. 

Spectral problems for differential operators ask to find  spectral data
for a given operator (direct problem) or recover an operator from its spectral 
data (inverse problem). Such problems stem from theoretical physics and lead to deep questions in analysis and adjacent areas of mathematics.  
Among the classical results in the area of inverse problems are existence and uniqueness theorems for Schr\"odinger equations by Borg and Marchenko \cite{Borg1, Borg2, Mar1, Mar2} and further extensions to broader classes of Dirac systems by Gelfand, Levitan, Gazymov, Krein and their collaborators \cite{GL, LS}. 

Our goal in this paper is to extend classical theory to broader classes of CS
and to provide explicit algorithms for the inverse spectral problem in those classes. In our approach spectral measures become symbols of truncated Toeplitz operators, bringing inverse spectral problems into the general framework of the Toeplitz approach to problems of the uncertainty principle in harmonic analysis recently developed in \cite{MIF1, MIF2, MP3, CBMS}.

Truncated Toeplitz operators, defined in Section \ref{secTT}, are bounded
and invertible in Paley-Wiener (PW) spaces of entire functions if and only if
their symbols are sampling measures in PW-spaces. We show that the class of CS 
whose spectral measures are PW-sampling is broader than those classes considered in classical theories and use the inverse Toeplitz operators to solve the inverse spectral problem. 

One of the key parts of the Gelfand-Levitan theory is the statement that
the Fourier transform $\hat\mu$ of a spectral measure $\mu$ of a regular Dirac system
has the form $\delta_0+\phi$, where $\delta_0$ is the Dirac point-mass at 0 and
$\phi$ is a locally summable function on $\R$. Recalling that the Fourier transform of the Lebesgue measure $m$ on $\R$ is $\delta_0$ (after a proper normalization), the theory considers systems which are close, in some sense, to
the free system whose spectral measure is $m$. The Gelfand-Levitan approach to
the inverse spectral problem then uses the invertibility of the operator
$$T: L^2([0,L])\to  L^2([0,L]),\  Tf=f+\phi\ast f,$$ to solve the problem. The invertibility
of $T$ is established via the Hilbert-Fredholm Lemma and uses the property that
$T$ is of the form $I+K$, where $K$ is compact. The compactness of the convolution operator $Kf=\phi\ast f$ follows from the local summability of $\phi$, which is an important condition in the classical theory.

Our approach replaces the Fredholm invertibility of $T$ with invertibility of
the truncated Toeplitz operator $L_\mu$, see Section \ref{secTT}. We show that 
spectral measures satisfying the  condition 
\begin{equation}\hat\mu=\d_0+\phi\label{eq00001}\end{equation}
form a subclass of sampling measures in PW-spaces, see Lemma \ref{lemGL}. 
(We will call \eqref{eq00001} Gelfand-Levitan condition since it appears in
their work \cite{GL, LS}.)
The algorithm then proceeds with the use of the inverse operator $L^{-1}_\mu$
which exists when $\mu$ is PW-sampling.

This approach connects inverse spectral problems with the study of sampling measures in PW-spaces, see \cite{OS} for results and further references. While describing sampling measures in a fixed space $\PW_a$
is a deep and difficult problem, measures which are sampling for all PW-spaces admit an elementary description, see Theorem \ref{PWmu}.

Our algorithm takes an especially compact form in the case when the spectral measure is periodic, see Sections \ref{secPer}-\ref{secOPUC}. In this case the inverse spectral problem connects with the theory of orthogonal polynomials on the unit circle,
see Theorem \ref{tONP}.

Given a CS with a PW-sampling spectral measure corresponding to
a fixed initial condition at the origin, it is natural to ask
if the spectral measures corresponding to other initial conditions
are also  PW-sampling. One of our examples (see Section \ref{secACDPW}) 
shows that generally this is not true. Theorem \ref{t9} gives necessary
and sufficient conditions for other spectral measures (measures from
the same Aleksandrov-Clark family) to be PW-sampling.
 

Our approach allows us to extend the classical results developed in the work of Marchenko, Gelfand-Levitan, and Krein to broader classes of canonical systems and to illustrate the solution algorithms and formulas with a variety of examples in the last section of the paper. 

$$$$

The paper is organized as follows:

\begin{itemize}
	\item In Section \ref{sec1} we review the basics of KdB-theory
	\item Section \ref{secSM} discusses the definition and main properties of spectral measures of CS
	\item In Section \ref{secR} we define representing measures for de Branges (dB) spaces generalizing Parseval's identity
	\item In Section \ref{secW} we discuss the Weyl transform which generalizes the Fourier transform in the settings of KdB-theory
	\item Section \ref{secClark} contains the definition and basic formulas related to Aleksandrov-Clark (AC) measures
	\item In Section \ref{secPW} we discuss PW-spaces as a particular case of dB-spaces corresponding to a free system
	\item In Section \ref{secPWs} we introduce PW-sampling measures.
	 Theorem \ref{PWmu} gives an elementary description of PW-class
	\item Section \ref{secExPWs} provides basic examples of PW-sampling measures
	\item In Section \ref{secGL} we discuss the class of measures appearing the classical Gelfand-Levitan theory. Lemma \ref{lemGL} shows that such measures form a subclass of PW-class.
	\item Section \ref{secCSPW} introduces the main class of CS considered in this paper, the systems whose spectral measures are PW-sampling. We formulate
	Theorems \ref{t0001} and \ref{t1} showing that PW-sampling measures correspond to Hamiltonians with a.e. non-vanishing determinants. 
	\item In Section \ref{secSym} we show how symmetries of the Hamiltonian
	relate to symmetries of the spectral measure
	\item The special case of Krein's string equation, which corresponds to 
	CS with diagonal Hamiltonians, is discussed in Section \ref{secDiag}
	\item In Section \ref{secTT} we define truncated Toeplitz operators, one of the main tools in our algorithms
	\item Using Toeplitz operators we are able to supply the proofs of Lemma \ref{lemGL} and Theorem \ref{t1} in Section \ref{secpf}
	\item Section \ref{secRep} shows how to obtain reproducing kernels
	of dB spaces from sinc functions using inverse Toeplitz operators.
	\item In Section \ref{sech11} we provide an algorithm for the recovery of
	the upper left element of a Hamiltonian $\HH$ from the spectral measure.
	\item To recover off-diagonal terms of $\HH$ we define conjugate reprokernels in Section \ref{secCK}
	\item Section \ref{secHT} contains Theorem \ref{t5}, which gives the formula
	for the generalized Hilbert transform, the unitary operator sending a chain of dB-spaces to the dual chain corresponding to a different boundary condition
	\item In Section \ref{secGmu} we provide an algorithm for the recovery
	of the off-diagonal terms of $\HH$
	\item In Section \ref{secFt} we give formulas for the Fourier transform 
	of the reproducing kernel at 0, which is used in our algorithms 
	\item Section \ref{sech22} discusses algorithms of recovery of
	the lower right element of $\HH$
	\item  Section \ref{secCD} discusses the relation between AC-dual measures
	and a change of initial condition in a given system
	\item In Section \ref{secACDPW} we give an example of a PW-sampling measure
	whose AC-dual is not PW-sampling. Theorem \ref{t9} gives a necessary
	and sufficient condition for a PW-sampling measure to have only PW-sampling measures as duals
	\item Section \ref{secPer} starts the discussion of the periodic case
	\item Section \ref{secMom} introduces a matrix of trigonometric moments for
	a periodic measure
	\item Section \ref{secHmu} contains a simplified algorithm for the recovery of the upper left element of $\HH$ in the periodic case
	\item Section \ref{secGmu1} gives a similar algorithm for the recovery of 
	off-diagonal terms
	\item In Section \ref{secOPUC} we discuss connections of the periodic case with the theory of orthogonal polynomials
	\item In Sections \ref{sec0001}-\ref{sec00011} we provide a variety of examples
	illustrating our methods and formulas
	
\end{itemize}

\section{Canonical systems and spectral measures}

\subsection{Transfer matrices, Hermite-Biehler functions and de Branges spaces}
\label{sec1}

Instead of a two-dimensional vector function $X$ one may look for a $2\times 2$ matrix-valued solution
$M=M(t,z)$ solving \eqref{eq001}. Such a matrix valued function satisfying the initial condition  
$M(0,z)=I$ is called the \textit{transfer matrix} or the \textit{matrizant} of the system. The columns of the transfer matrix $M$ are the solutions for the system \eqref{eq001} satisfying the initial conditions $\begin{pmatrix}1\\0\end{pmatrix}$ (Neumann) and $\begin{pmatrix}0\\1\end{pmatrix}$ (Dirichlet) at 0.
As a general rule we denote
\begin{equation}M=\begin{pmatrix} A& B\\C&D\end{pmatrix}.\label{eqTM}\end{equation}

An entire function $F(z)$ belongs to the Hermite-Biehler (HB) class if 
$$|F(z)|>|F(\bar z)|\textrm{ for all }z\in \C_+.$$
We say that an entire function is real if it is real on $\R$.

 For each fixed $t$, the entries of the transfer matrix $M$ of the system \eqref{eq001}, $A(z)=A(t,z)\equiv A_t(z)$, $B(z), C(z)$ and $D(z)$ are real entire functions. The functions
$$E:= A-iC,\qquad \tilde E:= B-iD$$
belong to the Hermite-Biehler class, see for instance \cite{Rem}.

For an entire function $G$ we denote by $G^\#$ its Schwartz reflection with respect to $\R$,
$G^\#(z)=\bar G(\bar z)$. We denote by $H^2$ the standard Hardy space in the upper half-plane.

For every Hermite-Biehler function $F$ one can consider the de Branges  (dB) space $\BB(F)$, a Hilbert space of
entire functions defined as
$$\BB(F)=\left\{G\ | G\text{ is entire, }\frac GF,\ \frac {G^\#}F\in H^2\right\}.$$
The Hilbert space structure in $\BB(F)$ is inherited from $H^2$:
$$<G,H>_{\BB(F)}=\left<\frac GF,\frac HF\right>_{H^2}=\int_{-\infty}^{\infty}G(t)\bar H(t)\frac{dt}{|F(t)|^2}.$$

It follows that $$\BB(F)\stackrel{\rm iso}{\subset}  L^2(1/|F|^2,\R).$$

One of the important properties of dB-spaces is that they admit an equivalent axiomatic definition.

\begin{thm}{\cite{dB}} \label{dBAxioms}
	Suppose that $H$ is a Hilbert space of entire functions which satisfies 
	
	A1) $F \in H$, $F(\lambda) = 0$ $\Rightarrow$ $F(z) \frac{z - \overline{\lambda}}{z - \lambda} \in H$ with the same norm,
	
	A2)	 $\forall \lambda \notin \mathbb{R}$, the point evaluation $F\mapsto F(\l)$ is a bounded linear functional on $H$,
	
	A3)	 $F \mapsto F^\sharp$ is an isometry in $H$.

	Then $H = \BB(E)$ for some $E \in HB$.
\end{thm}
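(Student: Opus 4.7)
The strategy is to reconstruct a Hermite--Biehler function $E$ directly from the reproducing kernel of $H$ and identify $H$ with $\BB(E)$ as reproducing kernel Hilbert spaces. As a preliminary step I would strengthen A2 to bounded point evaluation at every $\l\in\C$: for real $\l_0$, picking $F_0\in H$ with $F_0(\l_0)\ne 0$ and writing an arbitrary $F\in H$ as $F=cF_0+G$ with $G(\l_0)=0$, axiom A1 applied along perturbations $\l_0+i\e\to\l_0$, together with A2 and a uniform boundedness argument, estimates $|c|$, and hence $|F(\l_0)|$, by $\const\|F\|$. This yields a reproducing kernel $K_w(z)$ defined on $\C\times\C$, and A3, being a conjugate-linear isometry, transfers to the kernel identity $(K_w)^\#(z)=K_{\bar w}(z)$.

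The heart of the argument is extracting a rank-two representation of $K_w$. For $w\in\C\setminus\R$, A1 says that $T_w\colon F\mapsto\tfrac{z-\bar w}{z-w}F$ is a norm-preserving map from $\ker\delta_w$ into $H$; A1 applied at $\bar w$ shows that its image is exactly $\ker\delta_{\bar w}$. Since $\ker\delta_w$ and $\ker\delta_{\bar w}$ are codimension-one subspaces with orthogonal complements spanned by $K_w$ and $K_{\bar w}$ respectively, $T_w$ extends to a unitary on $H$ sending a scalar multiple of $K_w$ to $K_{\bar w}$. Writing out the identity $\langle T_wF,T_wG\rangle=\langle F,G\rangle$ on pairs of reproducing kernels $K_\l,K_\mu,K_w$, and using the kernel symmetry from the previous paragraph to pair conjugated terms correctly, yields the classical de Branges identity. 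After clearing denominators one reads off real entire functions $A,C$ such that
$$K_w(z)=\frac{A(z)\overline{C(w)}-C(z)\overline{A(w)}}{\pi(\bar w-z)}.$$

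Setting $E:=A-iC$, positivity of $K_w(w)$ for $w\in\C_+$ is equivalent to $|E(w)|>|E(\bar w)|$, so $E\in\mathrm{HB}$. By construction $\BB(E)$ carries precisely the above reproducing kernel, so by uniqueness of the RKHS associated with a given positive-definite kernel, $H=\BB(E)$ both as sets of entire functions and as Hilbert spaces. I expect the rank-two extraction in the middle step to be the main obstacle: axiom A1 is merely a norm-preservation statement, and squeezing it into an algebraic identity of the above form requires a careful choice of test pairs $(F,G)$, control of the resulting cross-terms, and crucial use of A3 to guarantee that the extracted functions $A,C$ can be chosen real-valued on $\R$.
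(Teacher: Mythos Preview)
The paper does not prove this theorem: it is stated with the citation \cite{dB} and used as a black box, so there is no ``paper's own proof'' to compare against. Your outline follows the classical de Branges route (reproducing kernel $\rightarrow$ rank-two factorization $\rightarrow$ $E=A-iC$), which is the standard argument found in \cite{dB} and in expositions such as \cite{Rem,Rom}.

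Two places where your sketch is thinner than it should be. First, the extension of A2 to real $\l_0$: your perturbation argument via $\l_0+i\e$ and ``uniform boundedness'' is not quite right as stated, since A1 only gives you a map on functions vanishing at $\l_0+i\e$, not a direct control of $|c|$; the usual argument instead bounds $|F(\l_0)|$ by writing $F(\l_0)$ in terms of $F(\l_0+i\e)$ and $F(\l_0-i\e)$ via the division trick and then invoking A2 at the nonreal points. Second, in the last step you need strict positivity $K_w(w)>0$ for $w\in\C_+$ to conclude $E\in\mathrm{HB}$ (strict inequality), and you need to rule out common real zeros of $A$ and $C$; both follow once you know point evaluations are nonzero functionals, but you should say this explicitly. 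The ``rank-two extraction'' you flag as the main obstacle is indeed the crux, and in practice one derives it not by testing on arbitrary pairs $(F,G)$ but by computing $\langle K_\l,K_\mu\rangle$ directly and exploiting the unitarity of $T_w$ together with the symmetry $K_w^\#=K_{\bar w}$ to obtain a functional equation for $K$ that forces the displayed form.
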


It is not difficult to show that in every dB-space $\BB(E)$, (A2) actually holds for all $\l\in \C$. Thus, as follows from  the Riesz representation theorem, for each $\l\in\C$ there exists  $K(\l,\cdot)\in \BB(H)$ such that for any $F\in \BB(E)$,
$$F(\l)=<F,K(\l,\cdot)>_{\BB(E)}.$$
The function $K(\l,z)$ is called the reproducing kernel  (reprokernel) for the point $\l$. In the case of the dB-space $\BB(E)$, $K(\l,z)$  has the formula
$$K(\l,z)=\frac{1}{2\pi i}\frac{  E(z) E^\#(\bar \l)- E^\#(z)E(\bar \l)}{\bar \l -z }=\frac{1}{\pi }\frac{A(z) C(\bar \l) - C(z)A(\bar \l))}{\bar \l-z},
$$
where $A=(E+E^\#)/2$ and $C=(E^\#-E)/2i$ are real entire functions such that $E=A-iC$. 

The functions $E,\ \ti E$ corresponding to a canonical system \eqref{eq001} give rise to the family
of dB-spaces 
$$\BB_t=\BB(E(t,\cdot)),\qquad \tilde\BB_t=\BB(\tilde E(t,\cdot)).$$

A value $t$ is $\HH$-regular if it does not belong to an open interval on
which $\HH$ is a constant matrix of rank one. The spaces $\BB_t,\ti \BB_t$
form {\it chains}, i.e., $\BB_s\subsetneq \BB_t$ for $s<t$ and the inclusion
is isometric for regular $t$ and $s$.

 We denote by $K_t(z,w)$ and $\tilde K_t(z,w)$ their reprokernels; sometimes, when $t$ is fixed, we write $K_w(z)$ and 
 $\ti K_w(z)$.

We denote by $\Pi$ the Poisson measure on $\R$, $d\Pi(x)=\frac {dx}{1+x^2}$.
We will denote by $m$ the Lebesgue measure on $\R$. 

An entire function $F$ belongs to the Cartwright class $\CC_a$
if it has exponential type at most $a$ and
$\log |F|\in L^1(\Pi)$.

A dB-space $\BB(E)$ is called {\it regular} if
for any $F\in \BB(E)$ and any  $w\in\C$, $(F(z)-F(w))/(z-w)\in \BB(E)$. 
Any regular space $\BB(E)$ is a subspace of $\CC_a$ for some $a>0$, see
\cite{DM, Rem}.

Regular systems \eqref{eq001} give rise to chains
of regular dB-spaces $\BB_t,\ \ti \BB_t$.

 \begin{remark} 
A more general class of $2\times 2$ systems are self-adjoint systems (SAS) defined as
$$\Omega\dot X=z\HH X +QX\qquad {\rm on}\quad [0,\infty).
$$
Here $\Omega$ and $\HH$ are as before and $Q$ is another given $2\times 2$ locally summable symmetric real matrix function. 
An SAS is a general form of a second order symmetric real system.
It is well known that every regular system of this form is
equivalent to  a regular CS
 in the sense that they have the same dB spaces $\BB_t$ and therefore same spectral measures, as defined in the next section. An SAS can be reduced to a regular CS via a substitution, see \cite{Rem, Rom}.

One of the important classes of SAS is the class of Dirac systems for which $\HH=I$. Such systems play an important role in applications and constitute one of the main objects in the study of spectral problems, see for instance \cite{LS}. The class of canonical systems considered in this note includes the systems corresponding to regular Dirac systems.

$\triangle$
\end{remark}

\bs \subsection{Spectral measures}\label{secSM} There are several ways to introduce spectral measures of canonical systems. We'll make a simplifying assumption that the system has no "jump intervals", i.e., intervals on which the Hamiltonian is rank one and constant. In this case all $t\in [0,\infty)$ are
$\HH$-regular and all inclusions $\BB_s\subset\BB_t, \ti\BB_s\subset\ti\BB_t$
are isometric.
We can make this assumption because we will be mostly concerned with the case $\det\HH\neq 0$ a.e.

We informally identify absolutely continuous measures with functions
writing $\mu=f$ for the measure satisfying $d\mu(x)=f(x)dx$. In particular,
instead of writing
$\mu=cm$ for constant multiples of the Lebesgue measure we  
 write $\mu=c$.

A  measure $\mu$ on $\R$ is called Poisson-finite ($\Pi$-finite) if  $$\int\frac{d|\mu|(x)}{1+x^2}<\infty.$$
A measure on $\hat R=\R\cup \{\infty\}$ is $\Pi$-finite if it is a sum of a $\Pi$-finite measure on $\R$ and a finite point mass at infinity.

 By definition, a positive measure $\mu$ on $\R$ is a spectral measure of the CS \eqref{eq001} with the initial condition $\begin{pmatrix}1\\0\end{pmatrix}$ at $t=0$ if
$$\forall t,\qquad \BB_t\stackrel{\rm iso}{\subset} L^2(\mu).$$
(The definition is slightly more complicated in presence of jump intervals for the Hamiltonian, which we do not allow in this paper.)
It is well-known that spectral measures of regular CS are $\Pi$-finite, see for instance \cite{Rem}. In a similar way, using $\ti B_t$, one can define a spectral measure
$\ti \mu$ for the initial condition  $\begin{pmatrix}0\\1\end{pmatrix}$. 

Conversely, one of the main results of the Krein-de Branges theory says that every positive $\Pi$-finite
measure is a spectral measure of a regular CS. In general, the corresponding system may not satisfy $\det \HH\neq 0$ a.e., the restriction we are assuming in this article. Also,
HB functions corresponding to the systems considered in this paper have no zeros on the
real line. We will assume this restriction in our general
discussions of dB-spaces. (If $E$ vanishes at some point of $\R$, then all functions in $\BB(E)$ must vanish at the same point, as follows from the definition. PW-type spaces discussed in this note clearly do not have such a property.)

Every regular canonical system has a spectral measure; in fact for $\mu$ we can take any limit point of the family of measures $|E_t|^{-2}$ as $t\to \infty$, \cite{dB}. The spectral measure may or may not be unique. It is unique iff
\begin{equation}\int{\rm trace}\ \HH(t)dt=\infty.\label{eq002}\end{equation}
The case when the spectral measure is unique is called the limit point case and the case when
it is not, the limit circle case.

Finally, let us mention that spectral measures are invariant with respect to  'time' parametrizations, i.e., a  change of variable $t$ in the initial system \eqref{eq001} via an increasing homeomorphism $t\mapsto s(t)$ does not
change the spectral measure.

\subsection{De Branges measures}\label{secR}
If $E$ is an HB-function non-vanishing on $\R$, then one
can define a continuous branch of the argument of 
$E$, $\phi_E(x)=-\arg E(x)$. The function $\phi_E$ is called the phase function for the space $\BB(E)$. It is not difficult to show that $\phi_E$ is a growing
real analytic function, $\phi'>0$ on $\R$.

We call a sequence of real points discrete if it has no finite accumulation
points. A measure on $\hat\R=\R\cap\{\infty\}$ is discrete if its support on $\R$ is a discrete sequence.

For a dB-space $\BB(E)$ there exists a standard one-parameter family of discrete measures on $\R$ 
such that $\BB(E)\overset{\rm iso}= L^2(\mu)$. 
Let
$\a\in \C,\ |\a|=1$ and denote by
$t_n$ the sequence of points on $\R$ such that
$$\frac{E^\#(t_n)}{E(t_n)}=\alpha.$$
In terms of the phase function, $\phi(t_n)=\arg\alpha\ (\rm{mod }\ \pi)$.
Consider the measure
\begin{equation}\mu_\alpha=\sum_n \frac \pi{\phi'(t_n)|E(t_n)|^2}\d_{t_n}.\label{eqRM}\end{equation}
Then $\BB(E)\overset{\rm iso}= L^2(\mu_\alpha)$, for all
values of $\alpha$ except possibly one, see for instance \cite{dB}. It is well known that in the case considered
in this paper, when $E=E_t$ is an HB function corresponding
to a regular det-normalized CS, such an exceptional
value of $\a$ does not exist. 


If $E_t$ is the family of HB-functions corresponding to a system \eqref{eq001} with a fixed initial condition at 0, and $E=E_T$ for some fixed $T>0$, then the family of measures $\mu_\alpha$  constructed as above is the family of spectral
measures of the system restricted to the finite interval 
$(0,T)$ with a corresponding self-adjoint boundary condition at $0$ and various self-adjoint boundary conditions at $T$.

The Cauchy integrals of the measures $\mu_\a$ can be expressed
through the elements of the transfer matrix $M$. The following
formula will be useful to us in Section \ref{secCD}.

\begin{lem}\label{lemBA} Let $M$ be a transfer matrix
	\eqref{eqTM} of a CS \eqref{eq001}. Let $E=A-iC$
	be the HB function corresponding to the first column
	of $M$ and let the representing measures $\mu_\a$ for $\BB(E)$ be defined as in \eqref{eqRM}.
	Suppose that 
	$\BB(E)\overset{\rm iso}= L^2(\mu_{-1})$. Then
$$\KK\mu_{-1}(z):=\frac1{\pi }\int\left[\frac1{\zeta-z}-\frac \zeta{1+\zeta^2}\right]~d\mu_{-1}(\zeta)=-\frac BA+\const.$$
\end{lem}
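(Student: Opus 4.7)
The strategy is to show that both sides are meromorphic Herglotz (Nevanlinna) functions on $\C$ with the same principal parts, and then to conclude that their entire difference is a constant via a growth estimate on the imaginary axis.

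First identify the support and weights of $\mu_{-1}$. Rewriting the defining condition $E^\#(t_n)/E(t_n) = -1$ as $(A+iC)/(A-iC) = -1$ shows that the points $t_n$ in \eqref{eqRM} are precisely the zeros of $A$; at each such point $E(t_n) = -iC(t_n)$, so $|E(t_n)|^2 = C(t_n)^2$. Differentiating $\phi = -\arg E$ gives $\phi'(t) = (C'A - A'C)/(A^2 + C^2)$, which at $t_n$ reduces to $\phi'(t_n) = -A'(t_n)/C(t_n)$. Hence $\mu_{-1}(\{t_n\}) = \pi/(\phi'(t_n)|E(t_n)|^2) = -\pi/(A'(t_n)C(t_n))$.

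Next, match residues. The function $-B/A$ is meromorphic with simple poles at $\{t_n\}$ and $\mathrm{Res}_{t_n}(-B/A) = -B(t_n)/A'(t_n)$. Because the transfer matrix $M$ satisfies $\det M \equiv 1$, the identity $AD - BC = 1$ evaluated at $t_n$ (where $A(t_n) = 0$) yields $B(t_n)C(t_n) = -1$, so the residue becomes $1/(A'(t_n)C(t_n))$. On the other hand, the residue of $\KK\mu_{-1}$ at $z=t_n$ is $-\mu_{-1}(\{t_n\})/\pi = 1/(A'(t_n)C(t_n))$, which matches exactly. Consequently $F(z) := \KK\mu_{-1}(z) + B(z)/A(z)$ is entire and real on $\R$.

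To conclude that $F$ is constant, note that both summands are Herglotz in $\C_+$: the first by its Cauchy--Stieltjes form, the second because $-B/A$ is the Weyl $m$-function of \eqref{eq001} at time $t$, a standard consequence of the $J$-unitarity of the transfer matrix on $\R$. Each is therefore $O(|z|)$ in $\C_+$, so the real entire function $F$ has at most linear growth and $F(z) = c_1 + c_2 z$. To see that $c_2 = 0$, examine $\mathrm{Im}\,F(iy)/y$ as $y \to \infty$: the contribution from $\KK\mu_{-1}$ is $\frac{1}{\pi}\int d\mu_{-1}(\zeta)/(\zeta^2 + y^2) \to 0$ by dominated convergence (using $\Pi$-finiteness of $\mu_{-1}$), while the contribution from $B/A$ tends to zero because the HB pair $E_t,\tilde E_t$ attached to a det-normalized regular canonical system has equal exponential type, forcing $|B(iy)/A(iy)|$ to stay bounded as $y\to\infty$. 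Hence $c_2 = 0$ and $F \equiv c_1$, which is the claim. The main technical delicacy is precisely this last step -- ruling out a linear term in the Herglotz representation of $-B/A$ -- which ultimately rests on the equality of exponential types of $E_t$ and $\tilde E_t$ for regular det-normalized Hamiltonians; everything else is a mechanical residue calculation driven by $\det M = 1$.
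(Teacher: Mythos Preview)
Your residue computation is correct and matches the paper's: both approaches identify $\supp\mu_{-1}=\{A=0\}$, compute $\mu_{-1}(\{t_n\})=-\pi/(A'(t_n)C(t_n))$ via the phase function, and use $\det M\equiv 1$ to match this with the residues of $-B/A$. The paper packages this slightly differently---it writes the Herglotz representation $-B/A=\KK\nu+\const$ first and then identifies $\nu$ with $\mu_{-1}$ mass by mass---but the calculation is the same.

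The genuine gap is in your last step, ruling out the linear term $c_2z$. Two problems. First, the passage ``each summand is Herglotz, hence $O(|z|)$ in $\C_+$, hence the entire function $F$ has at most linear growth'' is not a valid deduction as written: growth control in $\C_+$ away from $\R$ does not by itself force an entire function to be a polynomial. (This is salvageable: once you know both $\KK\mu_{-1}$ and $-B/A$ are Herglotz with the \emph{same} discrete representing measure on $\R$, their difference is automatically affine---no growth argument needed.) Second, and more seriously, your justification that $c_2=0$ because ``$E_t,\tilde E_t$ have equal exponential type, forcing $|B(iy)/A(iy)|$ to stay bounded'' is not sufficient: equal exponential type of two entire functions does not control the growth of their ratio along a ray, and in any case the lemma does not assume det-normalization. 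Most tellingly, your argument never invokes the hypothesis $\BB(E)\overset{\rm iso}{=}L^2(\mu_{-1})$, which is exactly what the paper uses to kill the linear term: that hypothesis makes $\{K_{t_n}\}$ an orthogonal basis of $\BB(E)$, and the paper cites de~Branges (proof of Theorem~22) to conclude that the Herglotz measure for $-B/A$ then has no pointmass at infinity, i.e.\ no $az$ term. Without using that hypothesis the conclusion can fail---the value $\alpha=-1$ could be the exceptional one for $E$, in which case the Herglotz representation of $-B/A$ genuinely carries a linear part.
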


Note that, in particular, the measure $\mu_{-1}$ is supported
at the zeros of $A(z)$, which can also be seen straight from
its definition.

\begin{proof}
The inequality $K_z(z)=||K_z||^2>0$
for the reproducing kernel $K_z$ of the space $\BB(A+iB)$
implies that $-\frac BA$ has positive imaginary part in $\C_+$. By the Herglotz representation theorem, 
$$-\frac BA =\KK \nu+\const$$
for some positive measure $\nu$ on $\hat R$.  Since $-B/A$ is analytic outside of the zeros of $A$, $\nu$ is a discrete measure whose support on $\R$ is at $\{A=0\}=\{t_n\}$, the same as $\mu_{-1}$. Since
$\BB(E)\overset{\rm iso}= L^2(\mu_{-1})$, 
the reprokernels $K_{t_n}$ form an orthogonal basis in 
$\BB(E)$, which implies that
$\nu$ cannot have a pointmass
at infinity, see for instance the proof of Theorem 22 in
\cite{dB}.
Examining the behavior of $-B/A$ near $t_n$ we 
find that $\nu$ has pointmasses of the size $-\pi B(t_n)/A'(t_n)$
at $t_n$. Since 
$$\phi=-\arg E=-\arctan\left( -\frac CA\right)\mod \pi,$$
 we have
$$\phi'=\frac 1{1+(C/A)^2}\frac {C'A-A'C}{A^2}=\frac 1{|E|^2}(C'A-A'C).$$
Since $A(t_n)=0$, 
$$\phi'(t_n)=-\frac{A'C}{|E|^2}(t_n)$$
and 
$$\frac \pi{\phi'(t_n)|E(t_n)|^2}=-\frac \pi{A'(t_n)C(t_n)}.$$
Since $\det M\equiv 1$, $C(t_n)=1/B(t_n)$ and
$$\nu(t_n)= -\pi\frac{B(t_n)}{A'(t_n)}= -\frac \pi{A'(t_n)C(t_n)}=\frac \pi{\phi'(t_n)|E(t_n)|^2}=\mu_{-1}(t_n).$$
\end{proof}

\subsection{The Weyl transform}\label{secW}
On every finite interval $[0,t]$ we define the Hilbert space $L^2(\HH, [0,t])$ associated with the Hamiltonian $\HH\geq 0$. The space consists of of 2-dim vector functions on $[0,t]$  
with the inner product 
$$<f,g>_{L^2_\HH[0,t]}=\int_0^t <Hf, g> dt.$$

 To give an alternative definition of a spectral measure for a CS we can consider the Weyl transform
$$\WW:~\begin{pmatrix} f_1\\f_2\end{pmatrix}\mapsto F(z)=\frac1{\sqrt{\pi}}\int_0^\infty\left<\HH(t)\begin{pmatrix} f_1\\f_2\end{pmatrix}, \begin{pmatrix}A_t(\bar z)\\C_t(\bar z)\end{pmatrix}\right>~dt,$$
first defined on test functions, and call $\mu$ a spectral measure if $\WW$ extends to an isometry $L^2(\HH, [0,t])\to L^2(\mu)$ for every $t\geq 0$.
 Note that then 
 \begin{equation}\BB_t=\WW L^2(\HH, [0,t])\label{eqW}\end{equation}
 and the reprokernels $K_\l(z)$ of $\BB_t$
 are obtained as Weyl transforms of the solutions, 
 \begin{equation}K_\l=\WW X_\l.\label{eq007}\end{equation}
 
 The spectral measure $\ti \mu$ can be similarly defined using the functions $B$ and $D$
 in place of $A$ and $C$ in the above formula. Spectral measures corresponding to other self-adjoint boundary conditions can also be defined using proper modifications of either one of our two definitions.

 \bs\subsection{Aleksandrov-Clark and dual measures}\label{secClark}
 Let $\varphi$ be a function from the unit ball of $H^\infty(\C_+)$
 (such functions are often called Schur functions).  Associated with every such $\varphi$
  is the family of $\Pi$-finite positive  measures $\{\s^\varphi_\a\}_{\a\in\T}$ on $\hat\R$ defined via the formula
  \begin{equation} \label{eqClark} \Re
 	\frac{\a+\varphi(z)}{\a-\varphi(z)}=py+\frac{1}{\pi}\int{\frac{yd\s^\varphi_\a
 			(t)}{(x-t)^2+y^2}}, \hspace{1cm} z=x+iy,\end{equation}
 where the number $\pi p $ can be interpreted as the pointmass of $\s^\varphi_\a$ at $\infty$. Such measures $\s^\varphi_a$ are called Aleksandrov-Clark (AC-) measures
 for $\varphi$. Families of AC-measures appear in complex function theory and applications to perturbation theory and spectral problems, see for instance \cite{PS}.
 
 At most one of the measures $\{\s^\varphi_\a\}_{\a\in\T}$ has a pointmass 
 at infinity:
  $\sigma_\alpha(\infty)\ne 0$ iff $\varphi(\infty)=\alpha$, in the sense of non-tangential limit, and $\varphi-\a\in L^2(\R)$. 
  
  It follows from the definition that all of the measures $\s^\varphi_\a$ are singular iff one of them is singular
  iff $\varphi$ is inner. All measures are discrete iff one of them is discrete iff $\varphi$ is a meromorphic inner function (MIF), an inner function which can be extended meromorphically to the whole plane.
  
   If $\varphi=\theta$ is inner, then
  	\begin{equation}\sigma_\alpha\cdot 1_\R=2\pi\sum_{\theta(\xi)=\alpha}\frac{\delta_\xi}{|\theta'(\xi)|},
  		\label{eqClarkpp}\end{equation}
 as can be deduced from the definition. 
 
 Every HB-function $E(z)$ gives
 rise to a MIF, $\theta_E=E^\#/E$. In this case the last formula is closely
 related to \eqref{eqRM} and the measures involved in the two formulas
 satisfy
 $$\s_a=|E|^2\mu_\a.$$


 We will call two $\Pi$-finite positive measures $\mu$ and $\ti\mu$ on $\R$  (AC-) dual if there exists a Schur function $\varphi$ such that  $\mu=\s^\varphi_{-1}$ and $\ti\mu=\s^\varphi_1$. We discuss
 dual measures in Section \ref{secCD} in more detail.
 
 Connection between duality of measures, as defined above, to spectral problems is provided by the property that if $\mu$ is a spectral measure
 of a CS with the Neumann initial condition at $0$, then   the spectral measure $\ti\mu$ of the same system with the Dirichlet initial condition is dual to $\mu$. Similarly, 
 measures satisfying \eqref{eqClark} with other $\a$ correspond to other initial conditions for the same system. Note also that any two measures
 $\s_\a$ and $\s_\b$, $\a\neq\b$, from the same AC-family are dual up to constant
 multiples, i. e., there exist $C_1,C_2>0$ such that $C_1\s_\a$ and $C_2\s_\b$ are dual.

 \bs\subsection{Paley-Wiener spaces}\label{secPW} Let us discuss the 'free' case $\HH\equiv I$.

 This case illustrates the connection between the Weyl transform and the  Fourier transform. We will use the following
 definition for the Fourier transform in $L^2(\R)$:
 $$ (\FF f)(\xi)\equiv \hat f(\xi)=\frac1{\sqrt{2\pi}}\int e^{-i\xi x}f(x)dx,$$
 first defined on test functions and then extended to a unitary operator
 $\FF L^2(\R)\to L^2(\R)$ via Parseval's theorem. The Paley-Wiener space $\PW_t$ of entire functions is defined as the image
 $${\rm PW}_t=\FF L^2[-t,t].$$
 By the Paley-Wiener theorem, $\PW_t$ can be equivalently defined as
 the space of entire functions of exponential type at most $t$ which belong
 to $L^2(\R)$. The Hilbert structure in $\PW_t$ is inherited from $L^2(\R)$. 
 
 Verifying the axioms in Theorem \ref{dBAxioms}, one can show that any $\PW_a$ is a dB-space. Alternatively, one can 
 arrive at the same conclusion using the first definition of a dB-space with $E(z)=e^{-iaz}\in$HB.
 
  For the transfer matrix of the free system $\HH=I$ we have
 $$\begin{pmatrix}A&B\\C&D\end{pmatrix}=\begin{pmatrix}\cos tz&-\sin tz\\\sin tz&\cos tz\end{pmatrix}.$$
 Using the standard notation $S(z)=e^{iz}$,
 $$E_t(z)=S^{-t}(z)\equiv e^{-itz},$$
 and the spectral measure of $\HH\equiv I$ is $\mu=m$, the Lebesgue measure on $\R$. The spaces $\BB_t$ are PW-spaces,
 $${\rm PW}_t=\BB(S^{-t})$$
 The reproducing kernels of ${\rm PW}_t$ are (constant multiples of) sinc functions
 $$ K_t(z,w)=\frac1\pi\si_t(z-\bar w)=\frac1\pi\frac{\sin t(z-\bar w)}{z-\bar w}.$$
 We will denote the kernel at $w=0$ by
 $$\mathring {k_t}(z):= \frac1\pi\frac{\sin tz}{z}.$$

  The Weyl transform in the free case $\HH\equiv I$ becomes the classical Fourier transform:
  $$\WW:~\begin{pmatrix} f_1\\f_2\end{pmatrix}\mapsto F(z)=\frac1{\sqrt{\pi}}\int_0^\infty\left<\HH(t)\begin{pmatrix} f_1\\f_2\end{pmatrix}, \begin{pmatrix}A_t(\bar z)\\C_t(\bar z)\end{pmatrix}\right>~dt=$$
  $$\frac1{\sqrt{\pi}}\int_0^\infty\left<\begin{pmatrix} f_1\\f_2\end{pmatrix}, \begin{pmatrix}\cos t\bar z\\ \sin t\bar z\end{pmatrix}\right>~dt=
   \frac1{\sqrt{\pi}}\int_0^\infty (f_1\cos tz +f_2\sin tz)dt = $$$$
\frac1{\sqrt{2\pi}}\int_{-\infty}^\infty f(t)e^{-itz}dt =\hat f(z),
 $$
where the function $f$ is defined on $\R$ as
$$f(t)=\begin{cases} \frac 1{\sqrt 2}(f_1(t)+if_2(t)) &\textrm{ for } t>0 \\
	\frac 1{\sqrt 2}(f_1(t)-if_2(t))  &\textrm{ for } t<0
	\end{cases}.
$$

It follows that the Lebesgue measure $m$ on $\R$ is the spectral measure for the free system. The measure is unique because $trace \ \HH\equiv 2$ and \eqref{eq002} holds.

 \bs\section{PW-sampling measures and Hamiltonians of PW-type}

 \bs\subsection{PW-sampling measures}\label{secPWs} By definition, a positive measure $\mu$ on $\R$ is  PW-sampling  ($\mu\in\PW$) if it is sampling for all Paley-Wiener spaces PW$_t$:
 $$\forall t\quad\exists C>0,\qquad \forall f\in {\rm PW}_t, \qquad C^{-1}\|f\|\le \|f\|_{L^2(\mu)}\le C\|f\|.$$
 
 Note that any PW-sampling measure must satisfy 
 \begin{equation}\sup_{x\in\R}\mu((x,x+1))<\infty, \label{eq77}
 	\end{equation}
 because otherwise
 $$\sup _{s\in\R}||f(\cdot - s)||_{L^2(\mu)}=\infty$$
 for every non-zero $f\in \PW_a$.
 
 Indeed, first notice that for any such $f$ there exists an interval
 $(x,x+\d),\ \d >0,$ on which $|f|>\e>0$. If \eqref{eq77} is not satisfied then there exists a sequence of intervals $I_n=(x_n,x_n+\d)$ such that $\mu(I_n)\to\infty$. But then
 $$||f(\cdot - (x_n+\d/2))||^2_{L^2(\mu)}\geq \e^2\mu(I_n)\to\infty,$$
 which shows that the norms are not equivalent because 
 $$||f(\cdot - (x_n+\d/2))||_{PW_a}=\const.$$
 
  It follows from \eqref{eq77} that a PW-sampling measure is  $\Pi$-finite. If $\mu=h\cdot m$ with $h^{\pm1}\in L^\infty$, then $\mu$ is obviously a PW-sampling measure. More examples will be given at the end of this section.
  
  While the description of sampling measures for a fixed $PW_a$-space is
  a difficult problem (see for instance \cite{OS}), PW-sampling measures, as defined
  above, admit a complete metric characterization.

  Given $\mu$ and $\delta>0$ we say that an interval $l\subset\R$ is $\delta$-{\it massive} with respect to $\mu$ if
 $$\mu(l)\ge \delta\textrm{ and } |l|\ge\delta.$$
 The $\delta$-{\it capacity}   of an interval $I\subset\R$ with respect to $\mu$, denoted by $C_\delta(I)$, is the maximal number of disjoint $\delta$-massive intervals intersecting $I$.

 \ms

 \begin{thm}\label{PWmu} $\mu$ is  PW-sampling if and only if

  (i) For any $x\in \R$, $\mu(x,x+1)\le \const$;

  (ii) For any $t>0$ there exist $L$ and $\d$ such that for all $I,\ |I|\ge L$,\newline
 $$C_\delta(I)\ge t|I|.$$
\end{thm}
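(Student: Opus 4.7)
The plan is to prove the two implications separately, with necessity of (i) already established by the argument given in the paper immediately after \eqref{eq77}.

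For the necessity of (ii), I would argue by contradiction: assume $\mu$ is $PW$-sampling and some $t>0$ admits, for every choice of $L,\delta$, a long interval $I$ with $C_\delta(I)<t|I|$. The idea is to exhibit a test function $f\in \PW_a$ (for some $a$ tied to $t$) concentrated on $I$ that carries little $\mu$-mass. A convenient choice is a modulated sinc-type bump, e.g.\ $f(x)=\mathring{k}_a(x-x_0)$ times a rapidly decaying factor outside $I$; a Plancherel--P\'olya estimate controls $\int|f|^2\,d\mu$ in terms of the sum $\sum_l \mu(l)\sup_l|f|^2$ over a covering by $\delta$-massive pieces and non-massive gaps. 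On non-massive subintervals $\mu$-mass is $<\delta$ per interval of length $\ge\delta$, so their contribution is $O(\delta\|f\|_\infty^2\cdot|I|)$; on massive pieces, there are by hypothesis at most $t|I|$ of them intersecting $I$. Choosing $t$ small (after $a$ is fixed) violates the lower sampling inequality.

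For sufficiency, the upper bound $\|f\|_{L^2(\mu)}\le C\|f\|_{L^2(\R)}$ for $f\in\PW_a$ is straightforward: the Plancherel--P\'olya inequality yields $\sup_{x\in[n,n+1]}|f(x)|^2\le K(a)\int_{n-1}^{n+2}|f|^2\,dm$, and summing against (i) produces the desired bound. The lower bound is the essential content. Given $a$, I would select $t$ large (e.g.\ a suitable multiple of $a$) and obtain $L,\delta$ from (ii), also shrinking $\delta$ so that $a\delta\ll 1$. Bernstein's inequality then ensures $|f|^2$ is essentially constant on any interval of length $\delta$: if $x,y$ lie in such an interval, $|f(x)|$ and $|f(y)|$ differ by at most $a\delta\|f\|_\infty$, and one can compare $|f|^2$ values up to a universal multiplicative constant on most of the real line. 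On every $\delta$-massive interval $l$, this yields
\begin{equation*}
\int_l|f|^2\,d\mu\;\ge\;c\,\mu(l)\cdot\frac{1}{|l|}\int_l|f|^2\,dm\;\ge\;c\,\frac{\delta}{|l|}\int_l|f|^2\,dm,
\end{equation*}
and by a Besicovitch-type covering argument we may restrict to $\delta$-massive intervals of bounded length (say $\le 2\delta$, after a covering reduction), so the ratio $\delta/|l|$ is bounded below. Summing over the $\ge t|I|$ disjoint $\delta$-massive intervals supplied by (ii) inside each block $I$ of length $L$ recovers a fixed fraction of $\int_I|f|^2\,dm$, provided $t$ was chosen large enough to compensate for the overlap and Bernstein losses.

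The main obstacle is this last step: converting the purely geometric capacity lower bound (ii) into an $L^2$ inequality requires carefully balancing three scales --- the oscillation length $1/a$ of elements of $\PW_a$, the massive-interval scale $\delta$, and the block length $L$ --- and relating the combinatorial count $C_\delta(I)\ge t|I|$ to an actual Lebesgue covering of a positive fraction of $I$. A clean way to handle this is to replace arbitrary $\delta$-massive intervals by a disjoint family of bounded-length subintervals via a greedy selection, at the cost of a fixed constant in the density, and then reduce to a Riemann-sum comparison of $\int_I|f|^2\,d\mu$ and $\int_I|f|^2\,dm$ on each block. The remaining task of patching the block estimates into a global sampling inequality is routine once the Bernstein oscillation control is in place.
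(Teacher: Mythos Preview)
Your route is genuinely different from the paper's, and the difference is instructive. The paper does not attempt a direct Plancherel--P\'olya/Bernstein argument at all; instead it reduces both directions to two black boxes about sampling \emph{sequences}. Proposition~\ref{prop1} (Ortega-Cerd\`a--Seip) says a measure is sampling for $\PW_a$ iff it carries uniformly bounded mass on unit intervals \emph{and} dominates (in the sense of disjoint neighborhoods of mass $\ge\delta$) some sampling sequence for $\PW_a$; Proposition~\ref{prop2} (Beurling) says a separated real sequence is sampling for $\PW_a$ iff its lower density exceeds $a/\pi$. With these in hand, sufficiency is two lines: from (ii) pick disjoint $\delta$-massive intervals whose centers form a sequence of density $>t/2$, invoke Beurling, then invoke Ortega-Cerd\`a--Seip. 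Necessity of (ii) is equally short: if $\mu$ samples $\PW_{t+\e}$, Proposition~\ref{prop1} hands you a sampling sequence whose neighborhoods are $\delta$-massive, and Beurling forces its density to exceed $t$, contradicting the failure of (ii).

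Your direct approach is not wrong in spirit, but the step you call a ``Riemann-sum comparison'' and declare routine is precisely where the entire content of Beurling's theorem is hiding. Bernstein gives only the \emph{additive} bound $|f(x)-f(y)|\le a\delta\|f\|_\infty$ on a $\delta$-interval, which does not yield the multiplicative control $\inf_l|f|^2\gtrsim |l|^{-1}\int_l|f|^2\,dm$ you need on intervals where $|f|$ is small relative to $\|f\|_\infty$; and even granting that, the passage from $\sum_l\delta\inf_l|f|^2$ (over $\ge t|I|$ massive intervals) to $c\int_I|f|^2\,dm$ is exactly the statement that a dense enough separated sequence is sampling --- this is not a covering lemma but a genuinely analytic fact about bandlimited functions. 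Your Besicovitch reduction to intervals of length $\le 2\delta$ is also not quite right: massiveness requires $\mu(l)\ge\delta$, and a short subinterval of a massive interval need not retain that mass (what is true, and what the paper implicitly uses, is that disjointness forces most of the $t|I|$ intervals to have length $\lesssim 1/t$). So either cite Propositions~\ref{prop1} and~\ref{prop2} as the paper does, or be prepared to reprove them; the Bernstein/Riemann-sum sketch as written does not close the gap.
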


To prove Theorem \ref{PWmu} we need to recall the following well-known results. 

A sequence $\L=\{\l_n\}\subset\C$ is called separated
if $|\l_m-\l_n|>c>0$ for a fixed $c$ and all $m\neq n$. A separated sequence
is sampling for $PW_a$  if 
$$||f(\l_n)||_{l^2}\asymp ||f||_{PW_a}$$
for $f\in PW_a$. Sampling measures on $\R$ and real sampling sequences are closely related.

\begin{prop}\label{prop1}{\cite{OS}} 
	A positive measure $\mu$ on $\R$ is sampling for $\PW_a$ iff 

1) $\mu((x,x+1))<C<\infty$ for some $C$ and all $x\in \R$ and

2) There  exists a sampling  sequence $\{\l_n\}$ for $\PW_a$ and $\d>0$ such that the  points $\l_n$ have disjoint 
neighborhoods $U(\l_n)$ satisfying $\mu(U_n)>\d$.
\end{prop}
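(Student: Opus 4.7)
The statement is an equivalence, so I would split the argument into the two implications; the bulk of the technical work lies in the necessity of condition (2).

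For the ``if'' direction, assume (1) and (2). The upper sampling inequality $\|f\|_{L^2(\mu)} \le C_1 \|f\|_2$ on $\PW_a$ follows from the Plancherel--Polya inequality $\sup_{x\in[t,t+1]}|f(x)|^2 \le C_a \int_{t-1}^{t+2}|f|^2\,ds$ combined with the uniform bound $\mu([t,t+1])\le C$ and summation over $t\in\Z$. For the lower bound, the idea is to transfer the $\mu$-mass on each $U_n$ to the sample value at $\lambda_n$ and invoke sampling of $\{\lambda_n\}$. Since a separated $\PW_a$-sampling sequence has bounded gaps (otherwise the lower Beurling density would vanish), we may take the $U_n$ to be intervals of uniformly bounded length, and then after splitting each into pieces of length at most a fixed small $r$ using (1), keep the sub-piece of largest $\mu$-mass (reassigning $\lambda_n$ if needed within a bounded distance). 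For $x\in U_n$ of diameter at most $r$, Cauchy--Schwarz applied to $f(\lambda_n)-f(x)=\int_x^{\lambda_n}f'$ gives
$$|f(\lambda_n)|^2 \le 2|f(x)|^2 + 2r\int_{U_n}|f'|^2.$$
Averaging against $d\mu/\mu(U_n)$ and using $\mu(U_n)\ge\delta'$, then summing over the disjoint $U_n$, combined with Bernstein's $\|f'\|_2\le a\|f\|_2$ and the sampling inequality for $\{\lambda_n\}$, produces
$$\|f\|_2^2 \le \frac{2C_0}{\delta'}\|f\|_{L^2(\mu)}^2 + 2C_0 r a^2\|f\|_2^2,$$
and the last term is absorbed into the left-hand side for $r$ small enough.

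For the ``only if'' direction, assume $\mu$ is $\PW_a$-sampling. Condition (1) follows from the upper sampling inequality applied to translates $f_0(\cdot-t)$ of a fixed $f_0\in\PW_a$ with $|f_0|\ge 1/2$ on some $(-\delta_0,\delta_0)$: since the $L^2$-norm is translation invariant, $\mu((t-\delta_0,t+\delta_0))\le 4C_1\|f_0\|_2^2$, and covering $[x,x+1]$ by $O(1/\delta_0)$ such intervals yields the uniform bound. For (2) I would partition $\R$ into intervals $I_k=[kL,(k+1)L]$ of length $L=\pi/(a+\varepsilon)$ with small $\varepsilon>0$, and prove the key claim that $\mu(I_k)\ge\delta_1>0$ uniformly in $k$. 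Granted the claim, the midpoints $\lambda_k$ form a sequence separated by $L$ with Beurling density $1/L>a/\pi$, hence $\PW_a$-sampling by Beurling--Landau, and the neighborhoods $U_k:=I_k$ are disjoint with $\mu(U_k)>\delta_1$. The claim is proved by contradiction: if $\mu(I_{k_j})\to 0$, test the lower sampling inequality against translates by the center $c_j$ of $I_{k_j}$ of a highly concentrated unit-norm $F_N\in\PW_a$ (for example a normalized $N$-fold product of Fejer-type kernels, with squared modulus decaying like $|y|^{-2N}$). The contribution on $I_{k_j}$ is bounded by $\|F_N\|_\infty^2\mu(I_{k_j})\to 0$ for fixed $N$, while the tail contribution over the remaining intervals, estimated via (1) and the $|y|^{-2N}$ decay, becomes smaller than the sampling constant $c$ for $N$ large, contradicting $\|F_N(\cdot-c_j)\|_{L^2(\mu)}^2\ge c$.

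The main obstacle is the quantitative test-function calibration in the necessity of (2): since $\PW_a$ functions cannot be compactly supported, the kernels $F_N$ must balance peak concentration $\|F_N\|_\infty$ against polynomial tail decay in accordance with the uncertainty principle. Choosing $N$ large enough that the tail estimate is dominated by the sampling constant $c$ while controlling $\|F_N\|_\infty$ requires a careful estimate, which together with the choice of partition length $L$ and the Plancherel--Polya constants constitutes the only substantive calculation in the proof.
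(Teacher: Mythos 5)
The paper does not prove Proposition~\ref{prop1}; it cites it directly to Ortega-Cerd\`a--Seip \cite{OS}, so there is nothing in the text to compare your argument against. Evaluated on its own, your proposal has genuine gaps in both directions.

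In the ``only if'' direction the key claim is false. You assert that if $\mu$ is $\PW_a$-sampling then for a \emph{fixed} partition $I_k=[kL,(k+1)L]$ with $L=\pi/(a+\varepsilon)$ one has $\mu(I_k)\ge\delta_1>0$ uniformly in $k$. Take $a=\pi$, and let $\mu=\sum_n\delta_{x_n}$ where $\{x_n\}$ is a Kadec-type perturbation of $\Z$ with $|x_n-n|\le 1/5$ that produces infinitely many consecutive gaps of length $1.4$ (e.g.\ $x_{10m}=10m-1/5$, $x_{10m+1}=10m+6/5$, $x_n=n$ otherwise). By Kadec's $1/4$-theorem this is a Riesz basis, in particular a sampling sequence, for $\PW_\pi$, and $\mu$ satisfies condition~(1). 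Yet for $L<1$ infinitely many intervals $I_k$ fit entirely inside the gaps and carry zero $\mu$-mass. So the uniform-partition strategy cannot work; the massive intervals for a sampling measure are relatively dense (in the sense of density, as captured by Theorem~\ref{PWmu}(ii)) but not evenly spaced, and the argument must allow their positions to adapt to $\mu$.

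In the ``if'' direction there is a subtler gap. You shrink $U_n$ to a sub-piece of diameter $\le r$ and move $\lambda_n$ into it, but then you invoke ``the sampling inequality for $\{\lambda_n\}$'' for the \emph{relocated} sequence. A bounded perturbation of a $\PW_a$-sampling sequence need not be sampling when the original has critical density $D_-=a/\pi$ (Proposition~\ref{prop2} gives no conclusion on the boundary, and Kadec's theorem is sharp). The hypothesis of the proposition does not guarantee strict supercriticality, so this step is unjustified. If instead you keep the original $\lambda_n$ and the original $U_n$ (of diameter $\le M$, bounded since the gaps of a sampling sequence are bounded), your inequality yields $(c_1-2Ma^2)\|f\|_2^2\le\tfrac2\delta\|f\|_{L^2(\mu)}^2$, and absorption requires $c_1>2Ma^2$, a relation between the sampling constant of $\{\lambda_n\}$, the bandwidth, and the neighbourhood size that the hypotheses do not provide. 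You correctly identify that some quantitative calibration is the crux, but the mechanism you propose for achieving it (relocating $\lambda_n$) is not sound as stated.
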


For $\L\subset\R$ define the lower Beurling uniform density of $\L$ as
$$D_-(\L)=\lim_{L\to\infty}\min_{x\in\R}\frac{\#(\L\cap (x,x+L))}L.$$

\begin{prop}\label{prop2}
A real separated sequence $\L$ is sampling for $PW_a$ if $\pi D_-(\L)>a$ and 
not sampling if $\pi D_-(\L)<a$. 
\end{prop}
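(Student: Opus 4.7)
The statement combines two classical theorems of Beurling, and I would prove each direction separately.

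For the necessity direction ($\pi D_-(\Lambda)<a$ implies $\Lambda$ is not sampling), the plan is to exhibit unit-norm functions in $\PW_a$ whose $\ell^2$-sums over $\Lambda$ are arbitrarily small. Since $\pi D_-(\Lambda)<a$, there exist $\e>0$ and intervals $I_n=(x_n,x_n+L_n)$ with $L_n\to\infty$ satisfying $\#(\Lambda\cap I_n)\le ((a-\e)/\pi)L_n$. The effective dimension of the subspace of $\PW_a$-functions essentially concentrated on $I_n$ is approximately $(a/\pi)L_n$ (prolate / Landau--Slepian concentration theory), which strictly exceeds the number of constraints imposed by $\Lambda\cap I_n$. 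Hence by a linear-algebra argument there exists $f_n\in\PW_a$ with $\|f_n\|_2=1$, mass essentially concentrated on $I_n$, and vanishing on $\Lambda\cap I_n$; its contribution to $\sum_{\lambda\in\Lambda}|f_n(\lambda)|^2$ from $\Lambda\setminus I_n$ is small because of the off-interval decay of prolate-type functions. This violates the lower sampling bound.

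For the sufficiency direction ($\pi D_-(\Lambda)>a$ implies $\Lambda$ is sampling), I would argue by contradiction using weak limits. Suppose there exist $f_n\in\PW_a$ with $\|f_n\|_2=1$ but $\sum_{\lambda\in\Lambda}|f_n(\lambda)|^2\to 0$. Separation of $\Lambda$ combined with the Plancherel--P\'olya inequality provides the upper sampling bound, so only the lower bound can fail. Choose centers $x_n\in\R$ at which the local $L^2$-mass of $f_n$ is maximal, and pass to the translates $g_n(z)=f_n(z+x_n)$. Extract a subsequence so that $\Lambda-x_n$ converges in the Fr\'echet sense to a limit $\Lambda^*$ with $D_-(\Lambda^*)\ge D_-(\Lambda)$, and $g_n$ converges uniformly on compacts (normal family for $L^2$-bounded functions of fixed exponential type) to some $g\in\PW_a$ with $\|g\|_2\le 1$ vanishing on $\Lambda^*$. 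A suitable choice of $x_n$ keeps a definite portion of the mass inside a fixed window and guarantees $g\not\equiv 0$. One then concludes by the zero-density bound: any nonzero $g\in\PW_a$ has zero counting function of upper Beurling density at most $a/\pi$, by Jensen's formula on large disks combined with the exponential-type and $L^2$ bounds; this contradicts $\pi D_-(\Lambda^*)>a$.

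The main obstacle is ensuring $g\not\equiv 0$. It requires a concentration lemma for $\PW_a$: for every unit-norm $f\in\PW_a$, a positive fraction of the $L^2$-mass (depending only on $a$) is contained in some unit interval. Without such a lemma the mass of $f_n$ could spread out arbitrarily and the weak limit would collapse to zero, destroying the argument. This concentration step, together with the translation/compactness framework and the density-of-zeros estimate for functions in $\PW_a$, is the heart of Beurling's original proof.
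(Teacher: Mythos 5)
The paper itself offers no proof of Proposition \ref{prop2}; it is stated with a citation to Beurling (and to Jaffard for a variant), so there is no internal argument to compare against. Your sketch is the standard Beurling-style route, and your necessity direction (Landau's trace/prolate argument: an interval of length $L_n$ with $\le\frac{a-\e}{\pi}L_n$ points of $\L$ carries an effective $\PW_a$-dimension $\approx\frac{a}{\pi}L_n$, so a concentrated function can be made to vanish there) is sound in outline; the error term from $\L\setminus I_n$ is handled exactly as you say, by the spectral decay of the concentration operator, and that part I believe.

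The genuine gap is in the sufficiency direction, and it is precisely the point you flag yourself and then resolve with a false lemma. The statement ``for every unit-norm $f\in\PW_a$, a definite fraction of the $L^2$-mass lies in some unit interval'' is not true. Take a fixed unit-norm $\phi\in\PW_a$ with rapid decay and set
\[
f_N(x)=\frac{1}{\sqrt N}\sum_{k=1}^{N}\phi(x-kM)
\]
with $M$ large. Then $f_N\in\PW_a$, $\|f_N\|_2\to 1$, yet the mass in any unit interval is $O(1/N)\to 0$. So if you normalize $\|f_n\|_2=1$ and translate to the point of maximal local $L^2$-mass, the locally-uniform limit $g$ can perfectly well be identically zero, and the argument collapses exactly as you feared. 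What rescues Beurling's proof is not a universal concentration estimate but the full weak-limit machinery: one passes to weak limits of \emph{pairs} (a normalized function together with a translated copy of $\L$) and exploits the contradiction hypothesis $\sum_\l|f_n(\l)|^2\to 0$ itself to rule out the degenerate spread-out scenario; alternatively one normalizes in $L^\infty$ (which guarantees $g\not\equiv 0$) and then must justify separately that the limit still vanishes on $\L^*$, which requires $\|f_n\|_\infty$ to stay bounded below and hence a different lemma. Either way, the concentration step cannot be stated as a property of arbitrary $f\in\PW_a$; it is conditional, and that conditional version is the actual content of Beurling's argument. As written, your proof of sufficiency does not close.

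A secondary caution: the weak limit $g$ you obtain has $\|g\|_2\le 1$ by Fatou, so it genuinely lies in $\PW_a$ and the zero-density bound (Levinson--Cartwright, density of real zeros $\le a/\pi$) applies; you also need that the lower Beurling density does not decrease under passage to weak limits of translates, which is true but should be stated, since $D_-(\L^*)\ge D_-(\L)>a/\pi$ is what produces the contradiction.
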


This statement is essentially contained in the work of Beurling \cite{Beurling}; see also \cite{Ja} for a different version.

\begin{proof}[Proof of Theorem \ref{PWmu}]
	Let $\mu$ satisfy the conditions of the theorem and let $t$ be a positive number. Then for $\d=\d(t)$ from (ii) one can choose
	a sequence of disjoint $\d$-massive intervals whose centers $\l_n$ form  a sequence $\L$ satisfying $D_-(\L)>t/2$.
 Hence  it is a sampling sequence for $PW_{t/2}$ by Proposition \ref{prop2}. Together with
	(i), Proposition \ref{prop1} implies that $\mu$ is a sampling measure for $PW_{t/2}$.
	
	In the opposite direction, if (i) does not hold, then $\mu$ is not a sampling measure for any $PW_a$ as was discussed above. Suppose that  (i) holds but $\d$ from (ii) does not exist for some $t>0$. If $\mu$ is sampling for $\PW_{t+\e}$, then by 
	Proposition \ref{prop1} one can choose $U(\l_n)$ such that  $\L=\{\l_n\}$ is sampling for $\PW_{t+\e}$. Then $D(\L)>t$ by Proposition \ref{prop2}. Since the sequence $\L$ is separated, one can choose
	$U(\l_n)$ to be $\d$-massive and obtain a contradiction.
	Therefore, $\mu$ is not sampling for $PW_{t+\e}$. 
	\end{proof}

For a PW-sampling measure  $\mu$  we denote by PW$_a(\mu)$ the Hilbert  spaces of entire functions which have the same elements as the spaces PW$_a$ but the $L^2(\mu)$ scalar product. From the axiomatic characterization of dB-spaces, it follows that PW$_a(\mu)$ are dB-spaces. By construction, all spaces PW$_a(\mu)$ are embedded isometrically in $L^2(\mu)$.

\subsection{Basic examples of PW-sampling measures}\label{secExPWs}

 \begin{enumerate}
 
 \item As was mentioned before, any measure $\mu$ of the form 
 $$d\mu=w(x)dx,\ w(x)\asymp 1$$
  is  PW-sampling.
 The upper bound $w<C$ can be replaced with  \eqref{eq77}. This can be seen directly
 from the estimate 
 $$||f'||_\infty \leq a\sqrt{2a}||f||_{PW_a}$$
  which holds for any $f\in \PW_a$.
  
  \bs

\item Any measure that decays to 0 at infinity, i.e., any
measure satisfying
$$\mu((x,x+1))\to 0 \ {\rm as}\ x\to\infty,$$
is not  PW-sampling. To see this, note that shifts of the sinc function
have the same PW-norm but
$$\left|\left|\frac{\sin (z-N)}{z-N}\right|\right|^2_{L^2(\mu)}\lesssim \sum_{n\in \Z} \frac{\mu((n,n+1))}{(N-n)^2+1}\to 0\ \text{ as } \ N\to\infty.$$
In particular, any finite measure is not  PW-sampling.
  
\bs

\item
Let $\mu= m +\sum_0^\infty \a_n \d_{x_n},$ where $0<\a_n<C$ and $\{x_n\}\subset \R$ is a separated sequence. Then $\mu$
 is  PW-sampling. 
 
 Also, if $\mu\in \PW$ and
$\nu\geq 0$ decays at infinity, in particular if $\nu$ has compact support, then $\mu+\nu\in \PW$. 
This and further examples of that kind are obtained from the
following argument.

If $\mu$ satisfies  (ii) from Theorem \ref{PWmu} and $\nu$ is any non-negative measure, then $\mu+\nu$ satisfies (ii).
If $\mu$ and $\nu$ both satisfy (i), then so does $\mu+\nu$. Hence, if $\mu\in \PW$ and $\nu\geq 0$ satisfies (i), then
$\mu+\nu\in\PW$. 

In particular, addition of finite positive measures with compact support or locally finite positive measures decaying at infinity does not
change the PW-sampling property. 
  
\bs

\item It follows from Theorem \ref{PWmu} that a positive periodic measure is  PW-sampling  if and only if it has
locally infinite support. In particular, the measures $\mu(x)=1+a\cos x ,\ a\in[-1,1]$, considered in 
Section \ref{secEx}, are $\PW$-sampling,
but a measure which has only finitely many point masses on each period is not. 
  
\bs

\item 

Any discrete measure $\mu$ satisfying
$$D_-(\supp\mu)<\infty$$
is not  PW-sampling, as follows from  Propositions \ref{prop1} and \ref{prop2}. In particular, any measure supported on a separated sequence is not  PW-sampling. 

\item PW-sampling measures of the form $\mu=1_S\cdot m,\ S\subset \R$ are described
by the Paneah-Logvinenko-Sereda theorem \cite{Paneah, LS1}.
 A set $S\subset \R$ is relatively dense if there exists $r,\delta >0$ such that $$|(x-r,x+r)\cap S|>\delta$$ for all $x$. An equivalent reformulation of the theorem in one dimension (the case studied by Paneah) is that $\mu$ is PW-sampling iff $S$ is relatively dense. This statement also follows from Theorem \ref{PWmu}.
\end{enumerate}

%
%
%
%
%
%

\subsection{Relations to Gelfand-Levitan theory}\label{secGL}

  We will say that a positive $\Pi$-finite non-discrete measure  $\mu$ is a GL-measure ($\mu\in$GL) if it satisfies the Gelfand-Levitan-type condition 
  \begin{equation}\label{eqGL}\FF\mu=C\d_0 +f,\end{equation} for some 
  function $f\in L^1_{\rm loc}(\R)$ and $C>0$. Here the Fourier transform $\FF\mu$ is understood in the sense of distributions. The condition means that $\mu$ is, in some sense, close to a constant multiple
  of the Lebesgue measure, $\frac C{\sqrt{2\pi}}m$, whose Fourier transform is $C\d_0$.
  As was mentioned in the introduction, measures of this class are considered in the classical Gelfand-Levitan theory of inverse spectral problems. The relation between the class of PW-sampling measures and the class GL defined above is established by the following statement.
  
  \begin{lem}\label{lemGL}
  $$\textrm{GL}\subset PW.$$
  	\end{lem}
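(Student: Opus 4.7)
The plan is to fix $t>0$ and prove that the positive Hermitian form $(F,G)\mapsto \int F\bar G\,d\mu$ is comparable on $\PW_t$ to the $L^2(\R)$ inner product, which is exactly the $\PW_t$-sampling condition. This form gives rise to a bounded nonnegative self-adjoint operator $L^t_\mu$ on $\PW_t$ with $\langle L^t_\mu F,G\rangle_{L^2(\R)}=\int F\bar G\,d\mu$. The strategy is to decompose $L^t_\mu=\alpha I+K_t$ with $\alpha>0$ and $K_t$ compact, and then obtain invertibility by the Fredholm alternative combined with injectivity.

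For the decomposition, pass to the Fourier side $F=\hat g$, $G=\hat h$ with $g,h\in L^2[-t,t]$. Setting $g^*(x)=\overline{g(-x)}$, the function $g*h^*$ is continuous, supported in $[-2t,2t]$, and satisfies $(g*g^*)(0)=\|g\|_2^2$; moreover $F\bar G=\tfrac{1}{\sqrt{2\pi}}\widehat{g*h^*}$. Mollifying $g*h^*$ to Schwartz approximants and pairing against the distributional identity $\hat\mu=C\d_0+f$ produces, in the limit,
\begin{equation*}
\int F\bar G\,d\mu=\frac{C}{\sqrt{2\pi}}\,\langle F,G\rangle_{L^2(\R)}+\frac{1}{\sqrt{2\pi}}\int_{-2t}^{2t}(g*h^*)(x)\,f(x)\,dx.
\end{equation*}
Thus $\alpha=C/\sqrt{2\pi}$ and $K_t$ is the operator corresponding to the second term.

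Under $\FF$, $K_t$ becomes the truncated convolution $g\mapsto \tfrac{1}{\sqrt{2\pi}}\mathbf{1}_{[-t,t]}(g*\check f)$ on $L^2[-t,t]$, where $\check f(u)=f(-u)$ and only $f|_{[-2t,2t]}$ contributes. Approximating $f$ in $L^1[-2t,2t]$ by $L^2[-2t,2t]$ functions $f_n$, each approximant gives a Hilbert--Schmidt (hence compact) operator; Young's inequality bounds the norm of the difference by $\|f-f_n\|_{L^1[-2t,2t]}/\sqrt{2\pi}\to 0$, so $K_t$ is a norm limit of compact operators and is itself compact.

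Consequently $L^t_\mu=\alpha I+K_t$ is Fredholm of index zero and nonnegative self-adjoint. If $L^t_\mu F=0$ then $\int|F|^2\,d\mu=\langle L^t_\mu F,F\rangle=0$, so the entire function $F$ vanishes $\mu$-a.e.; non-discreteness of $\mu$ supplies an accumulation point of $\supp\mu$, at which continuity plus the identity theorem force $F\equiv 0$. The Fredholm alternative then yields invertibility of $L^t_\mu$; combined with nonnegativity, this places the spectrum in $[c_t,C_t]$ with $c_t>0$, producing the two-sided sampling bound on $\PW_t$. Since $t>0$ was arbitrary, $\mu\in\PW$. The main obstacle is the compactness of $K_t$: because $f$ is only $L^1_{\mathrm{loc}}$, the integral operator is not directly Hilbert--Schmidt, and the $L^1$-approximation step above is essential; this is precisely the place where the truncated Toeplitz framework of Section \ref{secTT} is invoked. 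A secondary technical point---the a priori finiteness of $\int|F|^2\,d\mu$ needed to interpret the identity---can be handled either by first verifying condition (i) of Theorem \ref{PWmu} from the GL hypothesis via a test-function argument, or by reading the identity distributionally on both sides from the outset.
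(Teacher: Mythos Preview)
Your proof is correct and follows essentially the same route as the paper's: both arguments define the truncated Toeplitz operator $L_\mu$ on $\PW_t$, pass to the Fourier side to write it as $\alpha I+K$ with $K$ a truncated convolution against the locally summable function $f$, deduce compactness of $K$ from $f\in L^1[-2t,2t]$, and conclude invertibility via the Fredholm alternative together with the injectivity argument that $\int|F|^2\,d\mu=0$ forces $F\equiv0$ by non-discreteness of $\mu$. The only cosmetic difference is that the paper introduces the auxiliary measure $\mu_a$ with $\hat\mu_a=1_{[-2a,2a]}\hat\mu$ (which is absolutely continuous with bounded density, making boundedness of $L_{\mu_a}=L_\mu|_{\PW_a}$ immediate), whereas you work directly with $\mu$ and flag the a priori boundedness as a side issue.
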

  
  The inverse inclusion clearly does not hold. The measure $1+\cos x$ considered in Section \ref{secEx} is
  obviously PW-sampling but not GL because its Fourier transform consists of three pointmasses.
  Furthermore, any periodic measure with locally infinite support is a PW-sampling measure, but not a GL-measure, unless it is
  a constant multiple of $m$. 
  For a non-periodic example, If $\nu$ is any even finite measure with a non-trivial singular part, then $f=\hat \nu$ is a real bounded function and $\mu=(C+f)m\in \PW$ for large enough $C$. But $\mu\not\in $GL
   because $\FF(\mu)=D\delta_0-\nu$ and $\nu\not\in L^1_{\rm loc}$. 
  
   The proof of Lemma \ref{lemGL} is postponed until Section \ref{secTT}.
  
  \begin{remark}
  	The measures considered in Gelfand-Levitan theory (see for instance \cite{LS}, Chapter XII) are the spectral measures of Dirac systems with regular potentials on the half-line. Such measures have non-trivial absolutely continuous parts,
  	 satisfy \eqref{eqGL} and some additional conditions. 
  	 
  	 The class GL as defined above is the class of measures for which some of the methods of the classical theory may still work, although it is substantially broader than the set of measures actually considered in \cite{LS} or \cite{Rem1}.
  	
  	As will be seen in the proof of Lemma \ref{lemGL}, the restriction
  	that a GL-measure should not be discrete can be further weakened to the condition that it should not be supported on a zero set of
  	a PW-function (a sequence of finite Beurling-Malliavin exterior density, see
  	for instance \cite{Koosis}). 
  	
  	$\triangle$
  \end{remark}


  \bs\subsection{Canonical systems of PW-type}\label{secCSPW} We say that two Hilbert spaces $H_1$ and $H_2$ are equal as sets,
  and write $H_1\doteq H_2$, if the spaces have the same elements, but not necessarily the same scalar product. We say that
  a dB-space $\BB$ has exponential type $T$ if $T$ is the smallest constant with
  the property that all functions in $\BB$ have exponential type at most $T$.
  
  A formula by Krein, see for instance \cite{Rem, Rom}, allows one to calculate the type $T$ of a dB-space $\BB_t$
  corresponding to a Hamiltonian $\HH(t)$:
  \begin{equation}T=\int_0^t\sqrt{\det \HH(t)}dt.\label{eqType}\end{equation}

  Two SAS are {\it equivalent} if the corresponding chains of
 dB-spaces $\BB^1_t$ and $\BB^2_t$ satisfy
 $$\BB^1_t=\BB^2_{s(t)}$$
 (norms and all) for some increasing homeomorphism $s:\R_+\to\R_+$.
 The systems are {\it strongly equivalent} if $s(t)=t$.
 Basic results of KdB theory imply that two CS are equivalent iff they have the same spectral measures.

The condition $\HH\neq 0$ a.e., imposed on every CS \eqref{eq001}, together with
\eqref{eqW},
 implies that for any $t_2>t_1\geq 0$, $\BB_{t_1}\subsetneq \BB(t_2)$.
 
  Let $\HH$ be a Hamiltonian of a canonical system \eqref{eq001}. We say that $\HH$ is of PW-type ($\HH\in \PW$)  if for any $t>0$ there exists $s=s(t)>0$ such that $s(t)\to\infty$ as $t\to\infty$ and
 \begin{equation}\BB_t(\HH)\doteq {\rm PW}_s.\label{eq20}\end{equation}
 We call the corresponding system a PW-type system.

 \begin{thm}\label{t0001} If $\HH$ is of PW-type, then
 \begin{equation}
 	\det \HH\ne 0\quad{\rm a.e.},\qquad \int_0^\infty\sqrt{\det\HH(t)}dt=\infty. \label{eq005}
 \end{equation}
 Also, we have
 $$\BB_t\doteq{\rm PW}_{s(t)},\qquad s(t):= \int_0^t\sqrt{\det(\HH)}.$$
 \end{thm}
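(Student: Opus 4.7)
The plan is to identify $s(t)$ explicitly via Krein's exponential type formula and then read off the two remaining conclusions from the strictly increasing character of the chain $\{\BB_t\}$.

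First, I would note that the exponential type of a de Branges space is a set-theoretic invariant, being the supremum of the exponential types of its elements. Since the hypothesis $\BB_t(\HH)\doteq \PW_{s(t)}$ means precisely that these two spaces have the same underlying set of entire functions, they share a common exponential type. On one side this common type equals $s(t)$ (the type of $\PW_{s(t)}$); on the other, Krein's formula \eqref{eqType} gives $\int_0^t\sqrt{\det\HH}$. Hence
\[
s(t)=\int_0^t\sqrt{\det\HH(u)}\,du,
\]
which is the third assertion of the theorem. The divergence $\int_0^\infty\sqrt{\det\HH}=\infty$ is then immediate from the standing hypothesis $s(t)\to\infty$ as $t\to\infty$.

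To obtain $\det\HH\ne 0$ a.e., I would exploit the strict growth of the chain. As recalled at the beginning of Section \ref{secCSPW}, the standing assumption $\HH\ne 0$ a.e., together with the no jump-interval convention from Section \ref{secSM}, forces $\BB_{t_1}\subsetneq \BB_{t_2}$ for $t_1<t_2$; in particular the two subspaces have different underlying sets of functions. Combined with $\BB_{t_i}\doteq\PW_{s(t_i)}$ and the elementary observation that $\PW_{\tau_1}\subsetneq\PW_{\tau_2}$ (strictly, as sets) whenever $\tau_1<\tau_2$, this yields $s(t_1)<s(t_2)$; so $s$ is strictly increasing. Since $s$ is absolutely continuous with derivative $\sqrt{\det\HH}$ a.e., strict monotonicity is equivalent to $\sqrt{\det\HH}>0$ a.e., which is the first claim.

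The only delicate point is the set-theoretic nature of the exponential type, which justifies reading off $s(t)$ from the Krein formula even though $\doteq$ only compares elements and not norms; once that is in hand the rest is just a matter of matching Krein's type formula against the $\PW$ scale, and no heavier ingredient is needed.
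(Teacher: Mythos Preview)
Your identification of $s(t)$ via Krein's type formula and the deduction of $\int_0^\infty\sqrt{\det\HH}=\infty$ from $s(t)\to\infty$ are fine. The gap is in the last step: strict monotonicity of the absolutely continuous function $s(t)=\int_0^t\sqrt{\det\HH}$ does \emph{not} imply $\sqrt{\det\HH}>0$ a.e. Take any fat Cantor set $C\subset[0,1]$ (closed, nowhere dense, $|C|>0$) and put $g=1_{[0,1]\setminus C}$; then $t\mapsto\int_0^t g$ is strictly increasing on $[0,1]$ because every subinterval meets the open set $[0,1]\setminus C$ in positive measure, yet $g=0$ on a set of positive measure. So the implication you invoke is false, and your argument does not rule out $\{\det\HH=0\}$ being a nowhere dense set of positive measure.

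The paper takes a genuinely different route for this point. It first passes to a spectral measure $\mu$ of $\HH$ and observes that the PW-type hypothesis forces $\mu\in\PW$ (since $\BB_t\overset{\rm iso}\subset L^2(\mu)$ and $\BB_t\doteq\PW_{s(t)}$). It then invokes the proof of Theorem~\ref{t1}: assuming $|\{\det\HH=0\}\cap(0,T)|>0$, one picks a nonzero $f\in L^2(\HH)$ supported there, sets $F=\WW f\in\PW_{s(T)}$, and uses a quantitative lemma on the Fourier mass of functions in $\PW_{[a,b]}(\mu)$ (proved via the invertibility of the Toeplitz operators $L_{\mu,t}$) to squeeze $\|F\|_2$ between contradictory bounds. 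That Toeplitz/Fourier-support argument is the missing ingredient; the purely order-theoretic information in the chain $\{\BB_t\}$ is not enough.
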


\begin{proof} 
		We can assume that $\HH$ is trace-normalized.
	Let $\mu$ be a spectral measure corresponding to  $\HH$. Then $\mu\in \PW$ and
$\det \HH\neq 0$ a.e., see the proof of Theorem \ref{t1} in Section \ref{secpf}. 
The equation in \eqref{eq005} follows from $s(t)\to\infty$ and Krein's type formula, together with the remaining equations in the statement.
\end{proof}

  The change of time $t\mapsto s=s(t)$ in the above theorem allows us to transform any PW-type system, or more generally any canonical system satisfying \eqref{eq005}, into a canonical system with $$\det \HH=1\quad{\rm a.e.}$$
 We will call such systems {\it det-normalized}. A regular (locally summable) Hamiltonian will remain regular under det-normalization.

  \begin{thm}     Consider a regular CS with the Hamiltonian $\HH,\ \det\HH\ne 0$ a.e. Then there exists an equivalent regular system with the Hamiltonian $\tilde \HH$ such that $\det\tilde \HH=1$ a.e.    \end{thm}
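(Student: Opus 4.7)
The plan is to apply the arc-length time change $s=s(t):=\int_0^t\sqrt{\det\HH(\tau)}\,d\tau$ and push the Hamiltonian through its inverse. Since $\HH\ge 0$ a.e., the hypothesis $\det\HH\ne 0$ a.e.\ forces $\det\HH>0$ a.e., so $s$ is strictly increasing and absolutely continuous on $[0,\infty)$. Let $t=t(s)$ denote its inverse; it is defined on $[0,S)$ with $S=s(\infty)\in(0,\infty]$, and is a.e.\ differentiable with $t'(s)=1/\sqrt{\det\HH(t(s))}$.

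Next, I set
$$\tilde\HH(s):=\HH(t(s))\,t'(s)=\frac{\HH(t(s))}{\sqrt{\det\HH(t(s))}}.$$
A direct calculation gives $\det\tilde\HH(s)=\det\HH(t(s))\,(t'(s))^2=1$ a.e., and $\tilde\HH\ge 0$ a.e.\ is immediate. The change-of-variable identity $\int_0^{s_0}\|\tilde\HH(s)\|\,ds=\int_0^{t(s_0)}\|\HH(t)\|\,dt$ shows that $\tilde\HH$ is locally summable on $[0,S)$, so the new system is regular.

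For the equivalence, I would observe that if $X(t,z)$ solves \eqref{eq001} then $\tilde X(s,z):=X(t(s),z)$ satisfies $\Omega\,d\tilde X/ds = z\,\tilde\HH\,\tilde X$ by the chain rule, so the transfer matrices and hence the chains of dB-spaces agree along the reparametrization: $\BB_t(\HH)=\BB_{s(t)}(\tilde\HH)$ isometrically. The one point requiring attention is the case $S<\infty$, in which $s$ is a homeomorphism $\R_+\to[0,S)$ rather than a self-homeomorphism of $\R_+$ as the definition of equivalence in Section \ref{secCSPW} demands; this is precisely why the preceding discussion isolates the hypothesis $\int_0^\infty\sqrt{\det\HH}=\infty$ from \eqref{eq005}, under which $s$ is a genuine self-homeomorphism of $\R_+$ and equivalence holds in the strict sense. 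I expect no further obstacle, since the whole argument is the classical Krein arc-length normalization and every step is a routine change of variable.
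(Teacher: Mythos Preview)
Your approach is essentially identical to the paper's: both apply the arc-length reparametrization $s(t)=\int_0^t\sqrt{\det\HH}$, take the inverse $t(s)$, and set $\tilde\HH(s)=\HH(t(s))\,\dot t(s)$. You are in fact more careful than the paper, which simply asserts that $s$ is a $W^{1,1}_{\rm loc}$-diffeomorphism and that the systems are ``clearly'' equivalent; your observation about the case $S<\infty$ is a genuine point the paper's proof passes over in silence.
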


 \begin{proof}    The function
 	$$s(t)=\int_0^t\sqrt{\det \HH}$$
 	is a $W^{1,1}_{\rm loc}$-diffeomorphism because $\sqrt{\det \HH}$  is in $L^1_{\rm loc}$ and $\det \HH> 0$ a.e.
 Consider the inverse diffeomorphism $t(s)$ and set 
 	$$\tilde H(s)=(\HH\circ t)(s)
 	\dot t(s).$$ Then $\tilde\HH\ge 0$, is locally integrable, and $ \det \tilde\HH=1$ a.e. Clearly, the systems corresponding to  $\tilde\HH$ and $ \HH$
 	are equivalent.
 	\end{proof}

As was mentioned before, a change of time variable, in particular det-normalization, does not affect the spectral measure of the system.

 The converse of the last theorem is not true: our next example shows that  there are det-normalized canonical systems which are not of PW-type,
see also \cite{BR}.

\begin{example}

	\no  Consider the Hamiltonian $$\HH=\begin{pmatrix} t^m&0\\0&t^{-m}\end{pmatrix},\qquad m\in (-1, 1),$$
	and denote $$m=2\nu-1.$$
	Let $J_\nu$ denote the Besssel function of the first kind and let
	$$J_\nu(\lambda)=\lambda^\nu F_\nu(\lambda).$$
	Then $F_\nu$ is an entire function. For the solution
	$\begin{pmatrix} A // C\end{pmatrix}$ of the CS with the Hamiltonian $\HH$
	we have:
	
	\begin{lem} $$A=F_{\nu-1}(zt),\qquad C=t^{2\nu} z F_\nu(zt).$$
	\end{lem}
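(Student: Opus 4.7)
The plan is to reduce the lemma to two standard Bessel recurrences and then verify the canonical system equations by direct computation. Writing $\Omega \dot X = z\HH X$ with $X = \begin{pmatrix} A \\ C \end{pmatrix}$ coordinate-wise for the diagonal Hamiltonian $\HH = \begin{pmatrix} t^m & 0 \\ 0 & t^{-m} \end{pmatrix}$ yields the first-order pair
$$\dot A = -z\, t^{-m} C, \qquad \dot C = z\, t^m A.$$
So the task is simply to check these two equations for $A(t) = F_{\nu-1}(zt)$ and $C(t) = t^{2\nu} z\, F_\nu(zt)$, where $m = 2\nu - 1$.

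The key ingredients will be the two standard Bessel identities, $\frac{d}{d\lambda}[\lambda^{-\nu} J_\nu(\lambda)] = -\lambda^{-\nu} J_{\nu+1}(\lambda)$ and $\frac{d}{d\lambda}[\lambda^{\nu} J_\nu(\lambda)] = \lambda^{\nu} J_{\nu-1}(\lambda)$. Substituting $J_\nu(\lambda) = \lambda^\nu F_\nu(\lambda)$ rewrites these as
$$F_\nu'(\lambda) = -\lambda F_{\nu+1}(\lambda), \qquad 2\nu F_\nu(\lambda) + \lambda F_\nu'(\lambda) = F_{\nu-1}(\lambda).$$

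With these in hand, each equation reduces to a one-line verification. Applying the first identity with $\nu$ replaced by $\nu-1$ gives $F_{\nu-1}'(zt) = -(zt) F_\nu(zt)$, and then the chain rule yields $\dot A = z F_{\nu-1}'(zt) = -z^2 t F_\nu(zt)$, which matches $-z\, t^{-m} C = -z \cdot t^{-(2\nu-1)} \cdot t^{2\nu} z F_\nu(zt) = -z^2 t F_\nu(zt)$ since $2\nu - m = 1$. For $\dot C$, the product rule gives $\dot C = 2\nu\, t^{2\nu-1} z F_\nu(zt) + t^{2\nu} z^2 F_\nu'(zt)$; rewriting $t^{2\nu} z^2 F_\nu'(zt) = t^{2\nu-1} z \cdot (zt) F_\nu'(zt)$ and applying the second identity gives $t^{2\nu-1} z \big[F_{\nu-1}(zt) - 2\nu F_\nu(zt)\big]$. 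The $2\nu$-terms cancel, leaving $\dot C = z\, t^{2\nu-1} F_{\nu-1}(zt) = z\, t^m A$, as required.

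I do not expect any serious obstacle; the computation is entirely mechanical once the $F_\nu$-form of the recurrences is in place. The only mild subtlety is the initial data at $t = 0$, where $\HH$ degenerates. One should note in passing that the proposed solution satisfies $A(0^+) = F_{\nu-1}(0) = 1/(2^{\nu-1}\Gamma(\nu))$ and $C(0^+) = 0$, so the formula picks out the (essentially unique) solution regular at the origin — the one relevant for the spectral theory of this Hamiltonian, with the overall normalization absorbed into the definition of $F_\nu$.
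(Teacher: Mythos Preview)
Your argument is correct and is exactly the ``direct calculation'' the paper alludes to but does not write out: you verify the two first-order equations $\dot A=-zt^{-m}C$, $\dot C=zt^{m}A$ by reducing them to the standard Bessel recurrences, rewritten in terms of $F_\nu$. Your remark on the initial data is also apt: with the paper's convention $J_\nu=\lambda^\nu F_\nu$ one gets $A(0)=F_{\nu-1}(0)=1/(2^{\nu-1}\Gamma(\nu))$ rather than $1$, so the displayed $(A,C)$ is the regular solution at the origin up to an overall constant, which is harmless for the use made of it in the example.
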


The lemma can be established by direct calculations.
	
	\ms\no We will use the following asymptotic of the Bessel functions on the positive axis:
	$$J_\nu(\lambda) =\frac{\cos\left[\lambda-(2\nu+1)\frac\pi4\right]}{\sqrt\lambda}+O\left(\lambda^{-3/2}\right),\qquad \lambda\to\infty.$$
	The amplitudes of $|A|^2$ and $|C|^2$ are $t^mx^{-m}$ and $t^{-m}x^{-m}$ respectively. Hence for each fixed $t$,
	$$\frac1{|E(x)|^2}\approx       x^m,\qquad x\to\infty.$$ 
 It follows that the measure $|E_t(x)|^{-2}dx$ is not sampling for $\PW_t$
 for any $t$, except in the case $m=0$. Therefore the corresponding spaces $\BB_t$ are not equal to $\PW_t$ as sets and the system is not of PW-type.

\end{example}

 \begin{remark} $ $

 	 (a) All regular Dirac systems are of PW-type as follows from Lemma \ref{lemGL}
 	and the results of \cite{LS}, see also \cite{BR}.
 	
(b)  Characterization of PW-Hamiltonians is a difficult problem even in the diagonal case. So far we know that
$$\det\HH\ne0 \qquad \not\Rightarrow\qquad  PW$$
but 
$$\det\HH\ne0,\quad \HH\in W^{1,1}_{\rm loc}    \qquad        \Rightarrow\qquad  PW,$$
as follows from the property that regular Dirac systems correspond 
to CS with   $W^{1,1}_{\rm loc}$-Hamiltonians.

  (c) We will also see that  $$\begin{pmatrix}h_{11}&h_{12}\\h_{12}&h_{22}\end{pmatrix}\in\PW\ \not\Rightarrow \ 
 \begin{pmatrix}h_{22}&-h_{12}\\-h_{12}&h_{11}\end{pmatrix}\in\PW,$$ Example \ref{exnPW}. The second Hamiltonian corresponds to the switch between the Dirichlet and Neumann initial conditions, see Section \ref{secSym}.
 
 $\triangle$
\end{remark}

  The next statement relates PW-type systems and PW-sampling measures.

 \begin{prop} Suppose $\det(\HH)=1$ a.e., and let $\mu$ be the (unique) spectral measure of the corresponding CS. Then
 $$\mu\in{\rm(PW)}\qquad \Leftrightarrow\qquad \HH\in{\rm(PW)}.$$
 Moreover, if either holds, then
 $$\forall t,\quad \BB_t(\HH)= {\rm PW}_t(\mu).$$\end{prop}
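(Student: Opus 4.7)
The proof rests on three observations: by Krein's type formula \eqref{eqType}, $\BB_t(\HH)$ has exponential type exactly $t$ since $\det\HH=1$; by definition of the spectral measure $\BB_t(\HH)\stackrel{\rm iso}{\subset} L^2(\mu)$; and under $\mu\in\PW$ the set $\PW_t$ becomes a Hilbert space $\PW_t(\mu)$ isometrically embedded in $L^2(\mu)$ and still of exponential type $t$.

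For the forward direction $\HH\in\PW\Rightarrow\mu\in\PW$, I would apply Theorem \ref{t0001} in the det-normalized case: then $s(t)=t$ and $\BB_t\doteq \PW_t$ as sets. On this common vector space there are now two Hilbert norms with bounded point evaluations, the $\BB_t$-norm (which coincides with the $L^2(\mu)$-norm) and the standard $\PW_t$-norm. A closed graph argument forces the two norms to be equivalent: if $f_n$ converges in both norms, then both limits must agree with the pointwise limit guaranteed by boundedness of point evaluations. This equivalence is exactly the statement that $\mu$ is sampling for $\PW_t$. Letting $t$ vary gives $\mu\in\PW$; moreover $\BB_t(\HH)=\PW_t(\mu)$ as Hilbert spaces, since both have the same elements and both inherit their inner product from $L^2(\mu)$.

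For the converse $\mu\in\PW\Rightarrow\HH\in\PW$, the first step is to verify that $\PW_t(\mu)$ is itself a dB-space via axioms A1--A3 of Theorem \ref{dBAxioms}. Axiom A1 (division by $(z-\bar\l)/(z-\l)$) and axiom A3 (Schwarz reflection) preserve both the set $\PW_t$ and the $L^2(\mu)$ norm, since both operations have modulus one on $\R$; axiom A2 follows from the standard boundedness of point evaluations on $\PW_t$ combined with the sampling equivalence $\|\cdot\|_{\PW_t}\asymp\|\cdot\|_{L^2(\mu)}$. Hence $\PW_t(\mu)$ is a dB-space of exponential type $t$, isometrically embedded in $L^2(\mu)$, and the same is true of $\BB_t(\HH)$.

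The final step, and the main obstacle, is to identify $\PW_t(\mu)$ with $\BB_t(\HH)$. By de Branges' ordering theorem, two dB-spaces iso-embedded in a common $L^2(\mu)$ are comparable by inclusion, so one of $\BB_t(\HH)$, $\PW_t(\mu)$ contains the other. Because $\det\HH=1$ a.e., the chain $\{\BB_s(\HH)\}$ has no jump intervals and $s\mapsto\text{\rm type}(\BB_s)=s$ is strictly increasing; the Krein-de Branges theory further tells us that this chain exhausts every dB-space iso-embedded in $L^2(\mu)$, parametrized by exponential type. Consequently $\PW_t(\mu)$ must coincide with the unique member of type $t$ in the chain, namely $\BB_t(\HH)$, which gives $\BB_t\doteq \PW_t$ as sets, i.e., $\HH\in\PW$. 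The delicate point deserving the most care is exactly this uniqueness-of-the-chain step: it is what allows one to conclude equality from merely coinciding exponential types and joint embedding, and it relies crucially on the absence of jump intervals guaranteed by the det-normalization.
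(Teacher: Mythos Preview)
Your argument is correct and follows essentially the same route as the paper: the converse direction is handled identically via the de Branges uniqueness-of-chain theorem plus Krein's type formula to match parameters, and your verification that $\PW_t(\mu)$ satisfies axioms A1--A3 is exactly what the paper gestures at in Section \ref{secPWs}. The one place you add something is the forward direction, where you make the closed-graph argument explicit; the paper's proof is terse enough that it does not spell this out, but it is the natural way to pass from $\BB_t\doteq\PW_t$ to norm equivalence, and there is no circularity in citing Theorem \ref{t0001} since the only ingredient you actually use from it is Krein's formula \eqref{eqType}.
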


\begin{proof}
 A $\Pi$-finite measure $\mu$ admits a unique, up to parametrization, chain of regular de Branges spaces isometrically embedded in $L^2(\mu)$, as follows from the results of \cite{dB}. Since both $\BB_t(\HH)$ and $\PW_s(\mu)$ form such chains, we must have
$$\BB_t(\HH)=\PW_s(\mu) \doteq \PW_s.$$

By Krein's formula, the space $\BB_t$ consists
of functions of exponential type at most
$$\int_0^t\sqrt{\det\HH(x)}dx,$$
which is equal to $t$ due to the det-normalization. Since $\PW_s(\mu)$ consists of functions of type $s$, we must have $\BB_t(\HH)=\PW_t(\mu)$.
\end{proof}

  By one of the main theorems of Krein-de Branges theory, every positive $\Pi$-finite measure
 on $\R$ is a spectral measure for a canonical system with a locally summable (trace-normalized) Hamiltonian \cite{dB}. We obtain the following
 corollary.

 \begin{thm}\label{t1} Let $\mu$ be a PW-sampling measure. Then there exists a det-normalized $\HH$ such that $\mu$ is its spectral measure. Two det-normalized Hamiltonians have the same  spectral measure iff there is $k\in\R$ such that
 $$\check \HH=\HH_k:=T_k^*\HH T_k, \qquad T_k=\left(
                                 \begin{array}{cc}
                                   1 & k \\
                                   0 & 1 \\
                                 \end{array}
                               \right).
 $$
 \end{thm}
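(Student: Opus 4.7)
For existence, I would construct $\HH$ by applying the KdB inverse theorem directly to the chain $\{\PW_t(\mu)\}_{t \ge 0}$. Under the PW-sampling assumption each $\PW_t(\mu)$ is a regular de Branges space isometrically embedded in $L^2(\mu)$, and these spaces form an increasing, left-continuous chain whose union is dense in $L^2(\mu)$ by standard approximation for entire functions of exponential type. The KdB inverse theorem then yields a regular Hamiltonian whose dB-chain coincides with $\{\PW_t(\mu)\}$. Since we have parametrized by the exponential type, Krein's formula \eqref{eqType} gives $t = \int_0^t \sqrt{\det \HH(s)}\, ds$ for all $t \ge 0$, so $\det \HH \equiv 1$ a.e.; the Hamiltonian produced is automatically det-normalized, and $\mu$ is its spectral measure by construction.

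For the uniqueness direction, let $\HH$ and $\check{\HH}$ be two det-normalized Hamiltonians with common spectral measure $\mu$. By the de Branges ordering theorem the chain of regular dB-subspaces isometrically embedded in $L^2(\mu)$ is unique up to reparametrization, and det-normalization pins the parametrization as the type, so $\BB_t(\HH) = \BB_t(\check{\HH})$ as Hilbert spaces for every $t \ge 0$. Equality of the reproducing kernels $\frac{1}{\pi}\frac{A(z)C(w) - C(z)A(w)}{w-z}$ of the two spaces forces, for each $t$, a matrix $J(t) \in SL_2(\R)$ with $(\check A_t, \check C_t) = (A_t, C_t) J(t)$; the transfer-matrix initial condition $(A_0, C_0) = (\check A_0, \check C_0) = (1, 0)$ fixes the first row of $J(t)$, so $J(t) = \begin{pmatrix} 1 & 0 \\ -k(t) & 1 \end{pmatrix}$ for some real function $k$.

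I would then complete the argument by differentiating the identity $\check A_t = A_t - k(t) C_t$, $\check C_t = C_t$ in $t$ and substituting the canonical system equations for both $\HH$ and $\check{\HH}$. The resulting identity must hold for every $z \in \C$; separating the coefficients of $z^0$ and $z^1$ yields $\dot k \equiv 0$ together with the explicit relations $\check h_{11} = h_{11}$, $\check h_{12} = h_{12} + k h_{11}$, $\check h_{22} = h_{22} + 2k h_{12} + k^2 h_{11}$, which a direct matrix computation recognizes as $\check{\HH} = T_k^* \HH T_k$. The reverse implication follows by running the same $SL_2(\R)$ formula in the opposite direction. I expect the main obstacle to lie in the existence half, specifically in justifying that the chain $\{\PW_t(\mu)\}$ is admissible for KdB (continuity in $t$ and density of the union in $L^2(\mu)$) and that the inverse theorem delivers a locally summable Hamiltonian; once the Hilbert-space equality $\BB_t(\HH) = \BB_t(\check{\HH})$ is secured, the uniqueness half reduces to linear algebra plus a one-line ODE computation.
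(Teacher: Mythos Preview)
Your uniqueness argument is correct and is essentially a hands-on version of the paper's. The paper obtains a constant matrix $R\in SL(2,\R)$ by citing a structure theorem for de Branges spaces (Remling, Theorem~2.2) and then a uniqueness theorem for systems on $(0,t)$ (Romanov, Theorem~13) to conclude $R$ is independent of $t$; you instead read off an a priori $t$-dependent $J(t)$ from the reproducing kernel and then differentiate the canonical system to force $\dot k=0$. Both routes land on $\check\HH=T_k^*\HH T_k$. One small point: your justification that $J(t)$ is upper triangular should appeal to the evaluation at $z=0$ (where $A_t(0)=1$, $C_t(0)=0$ for every $t$) rather than to the $t=0$ initial condition, which by itself only pins down $J(0)$.

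The existence half, however, has a real gap, and it is precisely the one you flag. Feeding the chain $\{\PW_t(\mu)\}$ into the inverse theorem and parametrizing by type presupposes that this chain already exhausts the regular dB-subspaces of $L^2(\mu)$. If the trace-normalized Hamiltonian supplied by KdB had $\det\HH=0$ on a set of positive measure, the full chain would contain a continuum of spaces $\BB_t$ all of the same exponential type; reparametrizing by type would then collapse an interval of the trace parameter to a point, and the resulting ``Hamiltonian'' would fail to be locally summable. So the assertion that ``the inverse theorem delivers a locally summable Hamiltonian'' is exactly the statement that no such degenerate stretch occurs, and that is not free. The paper handles this by first taking the trace-normalized $\HH$ guaranteed by KdB and then proving $|\{\det\HH=0\}|=0$ via a quantitative lemma: for $f\in\PW_{[a,b]}(\mu):=\PW_b(\mu)\ominus\PW_a(\mu)$ one has $\int_a^b|\hat f|^2\ge d\|f\|_2^2$ with $d>0$ depending only on an upper bound for $b$. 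This estimate, proved using the invertibility of the truncated Toeplitz operators $L_{\mu,a}$, rules out the possibility of a nonzero $f$ whose Weyl image lives in $\PW_{s(T)}$ yet has Fourier support squeezed into a set of arbitrarily small measure, which is what a positive-measure zero set for $\det\HH$ would produce. Your outline does not supply any substitute for this step, so as written the existence direction is incomplete.
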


We postpone the proof until Section \ref{secpf}.

\begin{remark}\label{remHk} In the above formula
$$\HH_k=\begin{pmatrix}h_{11}&h_{12}+kh_{11}\\h_{12}+kh_{11}&h_{22}+2k h_{12}+k^2h_{11}
\end{pmatrix}$$
and $$M_k:=T^{-1}_kMT_k=\begin{pmatrix}A-kC&B+kA-kD-k^2C\\C&kC+D
\end{pmatrix}$$
is the matrizant for $\HH_k$.

 $\triangle$
\end{remark}

\subsection{Symmetries}\label{secSym}

For a Hamiltonian 
$$\HH= \begin{pmatrix}h_{11}&h_{12}\\h_{12}&h_{22}\end{pmatrix}$$
we denote
$$\breve \HH= \begin{pmatrix}h_{11}&-h_{12}\\-h_{12}&h_{22}\end{pmatrix}{\rm and }\ \ti \HH=\begin{pmatrix}h_{22}&-h_{12}\\-h_{12}&h_{11}\end{pmatrix}.$$

For a measure $\mu$, $\breve \mu$ is the measure satisfying $\breve\mu (e)=\mu(-e),\ e\subset \R,$
and $\ti\mu$ is the AC-dual measure as defined in Section \ref{secClark}.

We denote by $\mu_\HH$ the spectral measure corresponding to the system with the Hamiltonian $\HH$.

\begin{lem}\label{l1} $ $
	
	(i) The involution
$\HH\mapsto\breve{\HH}$
descends to the involution $\mu\mapsto \breve\mu$ for the corresponding spectral measures: $\mu=\mu_\HH,\ \breve \mu= \mu_{\breve\HH}$.

(ii)  The involution
$H\mapsto\tilde{H}$
descends to the involution $\mu\mapsto \tilde\mu$. In other words,
$$ \mu_{\tilde H}=\tilde\mu_H$$
\end{lem}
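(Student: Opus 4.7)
The plan is to realize each of the two involutions on the Hamiltonian as a conjugation by a fixed $2\times 2$ matrix $U$, use the resulting symmetry to transport solutions of the canonical system in a controlled way, and then track the effect on the first column of the matrizant (i.e.\ on the Weyl transform and the dB-space), which descends immediately to the spectral level.

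For (i), take $J=\begin{pmatrix}1&0\\0&-1\end{pmatrix}$. A direct computation gives $J\HH J=\breve\HH$ and $J\Omega J=-\Omega$, so if $X(t,z)$ solves $\Omega\dot X=z\HH X$, then $Y(t,z):=JX(t,-z)$ solves $\Omega\dot Y=z\breve\HH Y$. Applying this to the Neumann column $(A_t,C_t)^T$ (whose initial value $e_1$ is fixed by $J$), the Neumann column for $\breve\HH$ is $(A_t(\cdot,-z),-C_t(\cdot,-z))^T$. Plugging this into the Weyl transform formula from Section~\ref{secW} yields
$$\WW_{\breve\HH}g(z)\;=\;\WW_{\HH}(Jg)(-z).$$
Since $\langle\breve\HH g,g\rangle=\langle\HH(Jg),Jg\rangle$, the map $g\mapsto Jg$ is unitary from $L^2(\breve\HH,[0,t])$ to $L^2(\HH,[0,t])$; combining this with the change-of-variable identity $\int|f(-z)|^2 d\breve\mu=\int|f|^2 d\mu$ shows $\WW_{\breve\HH}$ is an isometry into $L^2(\breve\mu)$, so $\breve\mu$ is a spectral measure of $\breve\HH$.

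For (ii), take $U=\Omega$. One checks directly that $\Omega\HH\Omega^{-1}=\ti\HH$, so setting $Y=\Omega X$ shows $\Omega\dot Y=z\ti\HH Y$ whenever $\Omega\dot X=z\HH X$. This time, however, $\Omega e_1=-e_2$ and $\Omega e_2=e_1$, so the conjugation swaps the Neumann and Dirichlet initial conditions (up to a sign). Concretely, the Neumann column of the matrizant for $\ti\HH$ is $\Omega(B_t,D_t)^T=(D_t,-B_t)^T$, so the new Neumann HB-function is $D+iB=i(B-iD)=i\,\ti E$. Because $|i|=1$, multiplication by $i$ preserves $\BB(\cdot)$ as a Hilbert space (the norm formula $\|F\|^2=\int|F|^2/|E|^2$ depends on $|E|$ only), hence $\BB_t(\ti\HH)=\ti\BB_t(\HH)$ isometrically. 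The right-hand side is by definition isometrically contained in $L^2(\ti\mu_\HH)$, so $\ti\mu_\HH$ is a spectral measure of $\ti\HH$ with Neumann initial condition, i.e.\ $\mu_{\ti\HH}=\ti\mu_\HH$.

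No step is genuinely deep; the main delicacy is bookkeeping the three sign flips in (i) — the $-z$ in the argument, the flip of the second component of the vector via $J$, and the change of variable $\breve\mu(e)=\mu(-e)$ — which must cancel in exactly the right way, together with verifying that conjugation by $J$ preserves positivity and local summability of $\HH$ (immediate since $J$ is orthogonal).
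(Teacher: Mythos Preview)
Your proposal is correct and follows essentially the same approach as the paper: both arguments realize $\HH\mapsto\breve\HH$ and $\HH\mapsto\ti\HH$ as conjugation by the matrices $J=\begin{pmatrix}1&0\\0&-1\end{pmatrix}$ and $\Omega$ respectively, then transport solutions of the canonical system accordingly. The only cosmetic difference is in how the spectral measure conclusion is drawn: for (i) the paper reads off $|E_{\breve\HH}(x)|^2=|E_\HH(-x)|^2$ directly from the first column of the conjugated matrizant, whereas you route through the Weyl transform identity $\WW_{\breve\HH}g(z)=\WW_\HH(Jg)(-z)$; for (ii) the paper simply records the conjugated matrizant and leaves the conclusion implicit, while you spell out $E_{\ti\HH}=i\,\ti E$ and hence $\BB_t(\ti\HH)=\ti\BB_t(\HH)$ isometrically, which is arguably cleaner.
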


\begin{proof} $ $
	
	(i) Let $\tau=\begin{pmatrix}1&0\\0&-1\end{pmatrix}$. Then $\tau^2=$id and $\Omega\tau=-\tau\Omega$. Note that
 $$\tau H\tau=\breve H$$
 Computations show that $N(z)=M^{(\tau)}(-z),{\rm where }\ M^{(\tau)}:=\tau M\tau,$
 is the matrizant of $\breve H$: from $\tau\Omega\dot M\tau=z\tau HM\tau$ we get
 $$-\Omega\dot M^{(\tau)}=z\tau H\tau\tau M\tau=z\breve H M^{(\tau)}.$$
Furthermore,
 $$N(x)=\begin{pmatrix}A(-z)&-B(-z)\\-C(-z)&D(-z)\end{pmatrix}$$
 and $$|E_N(x)|^2=|A(-x)+iC(-x)|^2=|E(-x)|^2$$
 It follows that $\breve \mu$ is the spectral measure of $\breve H$.

  (ii) We use the same argument but with $\tau=\Omega$. We have
 $$\Omega(\Omega \dot M\Omega)=z\Omega HM\Omega=z(-\Omega H\Omega)(\Omega M \Omega)$$
 It follows that
 $$\Omega M \Omega= \begin{pmatrix}-D&C\\B&-A\end{pmatrix}$$
 is the matrizant of $\tilde H=-\Omega H\Omega$.
 \end{proof}

 \bs\subsection{Even measures and diagonal Hamiltonians}\label{secDiag} We call a measure
 $\mu$ on $\R$ even if $\mu=\breve\mu$, i.e., $\mu(x)=\mu(-x)$.

 \begin{thm}\label{t3} If the Hamiltonian $\HH$ is diagonal, then  the spectral measure $\mu_\HH$
  is even. Conversely,  if $\mu$ is an even positive $\Pi$-finite measure, then there 
  exists a diagonal Hamiltonian $\HH$ such that $\mu=\mu_\HH$. \end{thm}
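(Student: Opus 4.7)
The forward direction is immediate from Lemma \ref{l1}(i): a diagonal Hamiltonian $\HH$ satisfies $\breve\HH=\HH$, so $\breve{\mu_\HH}=\mu_{\breve\HH}=\mu_\HH$ and $\mu_\HH$ is even. The work is entirely in the converse.

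For the converse, given an even positive $\Pi$-finite measure $\mu$, invoke the Krein--de Branges existence theorem to produce a regular (trace-normalized) Hamiltonian $\HH$ with $\mu_\HH=\mu$. Since $\mu=\breve\mu$, Lemma \ref{l1}(i) yields $\mu_{\breve\HH}=\breve{\mu_\HH}=\mu$, so $\HH$ and $\breve\HH$ share the same spectral measure. Invoking the KdB uniqueness theorem (Theorem \ref{t1} in the det-normalized case, the general classical form otherwise), there exists $k\in\R$ with
$$\breve\HH = \HH_k = T_k^*\HH T_k.$$
Comparing the $(1,2)$ entries via Remark \ref{remHk} gives $-h_{12}=h_{12}+kh_{11}$, that is,
$$h_{12}(t)=-\tfrac{k}{2}h_{11}(t)\quad\text{for a.e.\ }t,$$
and the $(2,2)$ identity $h_{22}=h_{22}+2kh_{12}+k^2h_{11}$ is then automatically satisfied.

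The key construction: define $\HH':=\HH_{k/2}=T_{k/2}^*\HH T_{k/2}$. Its off-diagonal entry is $h_{12}+(k/2)h_{11}=0$, so $\HH'$ is diagonal; its $(2,2)$ entry is $h_{22}-(k^2/4)h_{11}$, which is nonnegative because $\HH\ge 0$ implies $h_{11}h_{22}\ge h_{12}^2=(k^2/4)h_{11}^2$ (and the case $h_{11}=0$ forces $h_{12}=0$). Thus $\HH'\ge 0$ is a valid Hamiltonian, and since the $T_k$-transformation preserves the spectral measure, $\mu_{\HH'}=\mu_\HH=\mu$, exhibiting $\HH'$ as the desired diagonal Hamiltonian.

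The main obstacle is setting up uniqueness in the appropriate generality. Theorem \ref{t1} is stated for det-normalized Hamiltonians, but a general even $\Pi$-finite $\mu$ need not correspond to a Hamiltonian with $\det\HH\ne 0$ a.e., so one must appeal to the classical KdB uniqueness and carefully reconcile it with the chosen normalization: the $T_k$-transformation preserves $\det\HH$ (since $\det T_k=1$) but not $\trace\HH$, so in the trace-normalized setting the identification $\breve\HH=T_k^*\HH T_k$ may need to be accompanied by a reparametrization of $t$ restoring the trace normalization. Once this uniqueness statement is in hand in the form above, the rest of the argument is the direct algebraic manipulation sketched in the previous paragraphs.
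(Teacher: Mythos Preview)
Your argument is essentially the paper's own: apply Lemma \ref{l1}(i) to get $\mu_{\breve\HH}=\mu_\HH$, invoke the uniqueness part of Theorem \ref{t1} to obtain $\breve\HH=\HH_k$, read off $-2h_{12}=kh_{11}$, and then pass to $\HH_{k/2}$ to diagonalize. Your caution about the scope of Theorem \ref{t1} (det-normalized, hence $\mu\in\PW$) versus the general $\Pi$-finite hypothesis is well placed; the paper handles the general case by the remark immediately following the proof, appealing to Krein's string theorem.
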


 \bs
 \begin{proof}  The first  part follows from the last lemma. Alternatively, we can argue as follows.
 	
 	Let $\HH={\rm diag}(h_1,h_2)$, then the system is
$$\dot C=zh_1A,\qquad \dot A=-zh_2 C.$$
Denote
$$\ti A(z,t)=A(-z,t),
\qquad \ti C(z,t)=-C(-z,t).$$
We have $\ti A=A$ and $\ti C=C$ because the functions satisfy the same system and the same ICs. We derive the reality condition
$$E(-z)=E^\#(z),$$
and therefore the symmetry of $\mu$.

As follows from Theorem \ref{t1} and Lemma \ref{l1}, if the measure is symmetric, then there is a number $k$ such that
$$\breve H-H=\begin{pmatrix}0&kh_{11}\\kh_{11}&\ast\end{pmatrix}$$
which implies
$$-2h_{12}=kh_{11},$$
so the ratio of the functions $h_{12}$ and $h_{11}$ is constant. Applying Theorem \ref{t1} again we see that the Hamiltonian
can be made diagonal.
\end{proof}

\begin{remark} CS with diagonal Hamiltonians are equivalent to so-called
	Krein's strings, see the book by Dym and McKean \cite{DM}. In particular,
the converse statement in Theorem \ref{t1}  also follows from Krein's theorem which says that if $\mu$ is a $\Pi$-finite even measure, then it is a spectral measure of a string. 

$\triangle$
\end{remark}

  Combining the last statement with Theorem \ref{t1},   in the PW-situation we arrive to the following conclusion.

 \begin{thm} There is a 1-to-1 correspondence between even PW-sampling measures and diagonal det-normalized Hamiltonians of PW-type. \end{thm}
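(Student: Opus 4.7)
The plan is to exhibit the spectral-measure map $\HH\mapsto\mu_\HH$ as the claimed bijection. Three earlier facts do almost all of the work: Theorem \ref{t3} characterizes diagonal Hamiltonians in terms of evenness of the spectral measure, Theorem \ref{t1} supplies existence and the $T_k$-uniqueness in the det-normalized PW setting, and the proposition at the end of Section \ref{secCSPW} equates $\HH\in\PW$ with $\mu_\HH\in\PW$ for det-normalized $\HH$. What needs to be verified is that ``diagonal'' is preserved by det-normalization, and that the $T_k$-ambiguity of Theorem \ref{t1} collapses when both Hamiltonians are required to be diagonal.

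For surjectivity, I would start with an even PW-sampling measure $\mu$. Theorem \ref{t3} produces a regular diagonal Hamiltonian $\HH_0$ with $\mu_{\HH_0}=\mu$. I would then pass to the det-normalization via the time change $s(t)=\int_0^t\sqrt{\det\HH_0(\tau)}\,d\tau$, obtaining $\HH(s)=\HH_0(t(s))\,\dot t(s)$. Since $\dot t(s)$ is a scalar factor, $\HH$ remains diagonal, and the spectral measure is unchanged under time reparametrization, so $\mu_\HH=\mu\in\PW$. The proposition of Section \ref{secCSPW} now yields $\HH\in\PW$, and $\HH$ is the required diagonal det-normalized PW-type Hamiltonian mapping to $\mu$.

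For injectivity, suppose $\HH_1$ and $\HH_2$ are two diagonal det-normalized PW-type Hamiltonians with $\mu_{\HH_1}=\mu_{\HH_2}$. Theorem \ref{t1} then forces $\HH_2=T_k^*\HH_1T_k$ for some $k\in\R$. Using the explicit form recorded in Remark \ref{remHk}, the $(1,2)$-entry of $T_k^*\HH_1T_k$ equals $k(h_1)_{11}$, and for $\HH_2$ to be diagonal this must vanish a.e. Det-normalization $(h_1)_{11}(h_1)_{22}=1$ a.e.\ forces $(h_1)_{11}>0$ a.e., so $k=0$ and $\HH_1=\HH_2$.

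The step I expect to be the most delicate is the first paragraph's bookkeeping: one must check that the time change preserves both the diagonal structure and the PW-type property, and that this change of variable is compatible with the $\Pi$-finiteness regime in which Theorems \ref{t1} and \ref{t3} were proved. Once that is done, the injectivity argument is a two-line matrix computation and the bijection is immediate.
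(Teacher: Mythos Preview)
Your proposal is correct and follows essentially the same route as the paper, which presents the theorem as an immediate consequence of Theorems \ref{t1} and \ref{t3} in a single sentence without further argument. One simplification worth noting: your separate det-normalization step is unnecessary, since in the PW case the proof of Theorem \ref{t3} already constructs the diagonal Hamiltonian as $T_k^*\HH T_k$ where $\HH$ is the det-normalized Hamiltonian supplied by Theorem \ref{t1}, and conjugation by $T_k\in{\rm SL}(2,\R)$ preserves $\det\HH=1$; this sidesteps the bookkeeping you flag as delicate (in particular, the need to verify $\det\HH_0\ne0$ a.e.\ before the time change).
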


  \bs\section{Solution formulae  for the inverse problem}

  \bs\subsection{Truncated Toeplitz operators}\label{secTT} Let $t>0$ and let $\mu$ be a positive measure such that PW$_t\subset  L^2(\mu)$. Then we can define a bounded operator
  $$L\equiv L_{\mu, t}: ~{\rm PW}_t\to {\rm PW}_t$$
  by the (symbolic) formula
  $$f\mapsto P_t (f \mu),$$
  where $P_t$ is the "projection" to PW$_t$. The formula makes perfect sense if $\mu\in L^\infty$ but we need to interpret it in terms of quadratic forms in the general case:
   \begin{equation}\forall f,g\in {\rm PW}_t,\qquad (Lf,g)=\int f\bar g d\mu.\label{eqTTdef}\end{equation}
   Clearly, $L$ is a well defined bounded non-negative operator. 
   
   The following statement can be deduced directly from the definition.

   \begin{lem}\label{lemL} $L_{\mu, t}$ is an  invertible (positive) operator iff $\mu$ is sampling for ${\rm PW}_t$.  \end{lem}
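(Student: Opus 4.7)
The plan is to reduce both directions to a single observation: the diagonal value of the sesquilinear form defining $L_{\mu,t}$ is exactly the $L^2(\mu)$-norm squared. Specializing \eqref{eqTTdef} to $g=f$ gives
\begin{equation*}
(L_{\mu,t}f,f) = \int |f|^2\, d\mu = \|f\|_{L^2(\mu)}^2, \qquad f\in\PW_t.
\end{equation*}
In particular $L_{\mu,t}\ge 0$, and by polarization it is self-adjoint on $\PW_t$ (the defining form is hermitian because $\mu$ is real and positive).

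With this in hand, I would argue as follows. Since $L_{\mu,t}$ is bounded, self-adjoint and non-negative, it is invertible (as a bounded operator with bounded inverse) if and only if there exist constants $c,C>0$ such that $cI\le L_{\mu,t}\le CI$, which by the variational characterization of the spectrum is equivalent to
\begin{equation*}
c\,\|f\|_{\PW_t}^2 \;\le\; (L_{\mu,t}f,f) \;\le\; C\,\|f\|_{\PW_t}^2 \qquad \text{for all } f\in \PW_t.
\end{equation*}
Substituting the identity above, this is exactly the two-sided bound
\begin{equation*}
\sqrt{c}\,\|f\|_{\PW_t} \;\le\; \|f\|_{L^2(\mu)} \;\le\; \sqrt{C}\,\|f\|_{\PW_t},
\end{equation*}
i.e.\ the sampling property of $\mu$ for $\PW_t$. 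This gives both implications at once.

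The only genuine checkpoint is the self-adjointness and quadratic-form identity, which are immediate from the definition of $L_{\mu,t}$ via \eqref{eqTTdef} together with the standing hypothesis $\PW_t\subset L^2(\mu)$ (which by the closed graph theorem already ensures the upper sampling estimate and hence the boundedness of $L_{\mu,t}$). After these one-line verifications, the lemma is a direct translation between the operator-theoretic statement ``positive self-adjoint operator bounded below'' and the analytic statement ``sampling inequality''. I do not anticipate a substantive obstacle beyond making sure that the invertibility in the conclusion means bounded invertibility on $\PW_t$, which is how positive self-adjoint lower bounds give bounded inverses via the spectral theorem.
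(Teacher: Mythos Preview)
Your argument is correct and is exactly the direct deduction from the definition that the paper has in mind (the paper does not give a proof, only states that the lemma follows directly from the definition). The key identity $(L_{\mu,t}f,f)=\|f\|_{L^2(\mu)}^2$ together with the spectral characterization of invertibility for bounded non-negative self-adjoint operators is precisely the intended one-line justification.
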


In regard to the family of PW-sampling measures we obtain the following

\begin{cor}
	If $\mu\in \PW$, then the operator $L_\mu$ defined by \eqref{eqTTdef} is bounded and invertible in every $\PW_t,\ t>0$.
	\end{cor}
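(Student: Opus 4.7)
The corollary follows essentially verbatim from Lemma \ref{lemL} combined with the definition of PW-sampling. My plan is to observe that, by definition, $\mu \in \PW$ means precisely that $\mu$ is a sampling measure for $\PW_t$ for every $t > 0$; fixing such a $t$, Lemma \ref{lemL} then delivers both the boundedness and the invertibility of $L_{\mu,t}$ on $\PW_t$. Since the argument amounts to little more than unpacking definitions, I would write it as a short paragraph rather than a multi-step proof.

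To be fully explicit, I would spell out how the two sides of the sampling inequality
$$ C_t^{-1}\|f\| \le \|f\|_{L^2(\mu)} \le C_t\|f\|,\qquad f\in\PW_t, $$
yield the two required properties of $L_{\mu,t}$. For boundedness, I would apply Cauchy--Schwarz in $L^2(\mu)$ to the defining quadratic form \eqref{eqTTdef},
$$ |(L_{\mu,t}f,g)| \;=\; \Bigl|\int f\bar g\,d\mu\Bigr| \;\le\; \|f\|_{L^2(\mu)}\|g\|_{L^2(\mu)} \;\le\; C_t^{2}\|f\|\,\|g\|, $$
which shows that $L_{\mu,t}$ extends to a bounded operator on $\PW_t$ with $\|L_{\mu,t}\|\le C_t^2$. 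For invertibility, I would note that $L_{\mu,t}$ is self-adjoint and positive, with
$$ (L_{\mu,t}f,f)\;=\;\|f\|_{L^2(\mu)}^{2}\;\ge\; C_t^{-2}\|f\|^{2}, $$
so $L_{\mu,t}\ge C_t^{-2}I$ in the operator sense; hence $L_{\mu,t}$ is invertible with $\|L_{\mu,t}^{-1}\|\le C_t^{2}$.

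There is no real obstacle here; the only mild subtlety is that the constant $C_t$ depends on $t$, so the boundedness and invertibility hold in each $\PW_t$ individually but need not be uniform in $t$. The statement of the corollary is a pointwise-in-$t$ assertion, so this is not an issue. All the work was already done in defining the class $\PW$ so as to match the hypothesis of Lemma \ref{lemL} on every level $t$.
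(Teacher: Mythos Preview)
Your proposal is correct and matches the paper's approach exactly: the paper states this as an immediate corollary of Lemma \ref{lemL} with no separate proof, and your write-up simply unpacks that implication (together with the observation, made just before the lemma, that $L$ is bounded whenever $\PW_t\subset L^2(\mu)$). The extra quantitative detail you include is fine but not required.
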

  
  Consider now the dB spaces
  $$\BB_t={\rm PW}_t(\mu)$$
  and the  operators
  $$j=j_t:~{\rm PW_t}\to\BB_t,\qquad f\mapsto f.$$

  \begin{thm}  If $\mu\in$PW, then for any $t>0$,
  $$j_t^*j_t=L_{\mu}.$$\end{thm}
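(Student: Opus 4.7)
The plan is to verify the identity $j_t^* j_t = L_\mu$ directly from the definitions, using the quadratic form characterization of $L_\mu$ given in \eqref{eqTTdef}.

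First I would check that $j_t: \PW_t \to \BB_t$ is a bounded operator so that the adjoint $j_t^*: \BB_t \to \PW_t$ makes sense. Because $\mu$ is PW-sampling, the hypothesis gives a constant $C$ such that
\begin{equation*}
C^{-1}\|f\|_{\PW_t} \le \|f\|_{L^2(\mu)} \le C\|f\|_{\PW_t}
\end{equation*}
for every $f \in \PW_t$. Since the scalar product on $\BB_t = \PW_t(\mu)$ is the one inherited from $L^2(\mu)$ while the scalar product on $\PW_t$ is inherited from $L^2(m)$, this sandwich says that $j_t$ (the identity on elements) is a bounded isomorphism of Hilbert spaces, so $j_t^*$ exists and is bounded.

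Next, for any $f, h \in \PW_t$, I would unfold the definition of the adjoint:
\begin{equation*}
\langle j_t^* j_t f, h\rangle_{\PW_t} = \langle j_t f, j_t h\rangle_{\BB_t} = \int f(x)\overline{h(x)}\, d\mu(x).
\end{equation*}
By the defining formula \eqref{eqTTdef} for the truncated Toeplitz operator, the right-hand side equals $\langle L_\mu f, h\rangle_{\PW_t}$. Since this holds for all $f, h \in \PW_t$, the two bounded operators $j_t^* j_t$ and $L_\mu$ on $\PW_t$ coincide.

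There is really no hard step here: the statement is essentially a tautology once one remembers that $\BB_t$ and $\PW_t$ have the same underlying set of functions, so that the embedding $j_t$ is the identity on elements, and that $L_\mu$ is defined precisely as the operator whose sesquilinear form is integration against $\mu$. The only point requiring any care is the boundedness of $j_t$, which is exactly what the PW-sampling hypothesis delivers; without it the adjoint would be unavailable and the identity would be meaningless.
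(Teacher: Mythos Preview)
Your proof is correct and follows essentially the same approach as the paper: both unfold $\langle j_t^* j_t f, g\rangle_{\PW_t} = \langle j_t f, j_t g\rangle_{\BB_t} = \int f\bar g\, d\mu$ and invoke the quadratic-form definition \eqref{eqTTdef} of $L_\mu$. You add the explicit verification that $j_t$ is bounded (so that $j_t^*$ exists), which the paper leaves implicit.
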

  \begin{proof} If $f,g\in{\rm PW}_t$, then
  $$(j^*j f,g)_{\rm PW}=(jf,jg)_\BB=(jf,jg)_\mu=\int f\bar g~d\mu.$$
  \end{proof}

\subsection{Postponed proofs}\label{secpf}

Using the operators $L_\mu$ we can now supply the proofs of the lemma on GL-measures and the theorem on Hamiltonians with PW spectral measures.

\begin{proof}[Proof of Lemma \ref{lemGL}]
	Let $\mu\in GL$, $\hat \mu = \d_0 +\phi,\ \phi\in L^1_{loc}$. 
	
	Let us first show that $L_\mu$ has trivial kernel in every $\PW_a$.
	If $f\in \PW_a,\ f\not\equiv 0$ is in the kernel, then the measure $f\mu$ annihilates $\PW_a$.
	In particular
	$$\int f^\# f d\mu=\int |f|^2 d\mu=0,$$
	which implies that $\mu$ is a discrete measure supported on the zero set of $f$. Since GL-measures are non-discrete,
	we obtain that $L\mu$ is injective on every $\PW_a$.
	
	Let $\mu_a$ be the measure such that $\hat \mu_a=1_{[-2a,2a]}\hat\mu$. Then $\mu_a=(1+\psi)m$ where
	$\psi=\FF^{-1}(\phi_a),\ \phi_a=1_{[-2a,2a]}\phi,$ is a bounded function. Note that the restriction of $L_\mu$ onto $\PW_a$ is equal to
	$L_{\mu_a}$. Therefore 	$L_{\mu_a}$ has trivial kernel in $\PW_a$.
	
	Note that
	$$\FF(L_{\mu_a}f)=1_{[-a,a]}(\hat f  + \hat f \ast \phi_a)= \hat f + 1_{[-a,a]}\hat f \ast \phi_a.
	$$
	
	Hence the operator 
	$$\hat L_{\mu_a}:L^2([-a,a])\to L^2([-a,a]),\ \hat L_{\mu_a} \hat f=\FF(L_{\mu_a} f),$$
	is of the Fredholm form
	$$\hat L_{\mu_a}=I +K, \ \ Kf=P_a( f \ast \phi_a),$$
	where $P_a$ is the orthogonal projection $L^2([-3a,3a])\to L^2([-a,a])$. Since $\phi_a$ is summable, and $P_a$ is bounded, 
	$K$ is compact. Since the kernel of $L_{\mu_a}$ is trivial, so is the kernel of $\hat L_{\mu_a}$. By the Fredholm alternative,
	$\hat L_{\mu_a}$ is invertible.
	
	Since $\phi_a$ is summable, $\hat L_{\mu_a}$ is bounded. 
	
	Hence $L_\mu$ is bounded and invertible in any $\PW_a$. By Lemma \ref{lemL}, $\mu\in \PW$.
\end{proof}

Let us now prove Theorem \ref{t1}.

We denote by $\PW_{[a,b]}(\mu)$
the orthogonal difference $\PW_{b}(\mu)\ominus\PW_{a}(\mu)$.
We will need the following

\begin{lem}
	Let $\mu\in\PW$. Then for any $0\leq a<b\leq c$ there exists
	a constant $d=d(c,\mu)>0$ such that for any $f\in\PW_{[a,b]}(\mu)$
	$$\int_a^b|\hat f(x)|^2 dx > d ||f||^2_2.$$
\end{lem}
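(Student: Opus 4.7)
The plan is to combine the $L^2(\mu)$-orthogonality built into the definition of $\PW_{[a,b]}(\mu)$ with the two-sided norm equivalence $\|g\|_{L^2(\mu)}\asymp \|g\|_{L^2(\R)}$ on $\PW_c$ that comes from $\mu\in\PW$, reducing the assertion to a Plancherel identity.

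First I would introduce the low-frequency truncation $h:=\FF^{-1}(\hat f\cdot 1_{[-a,a]})\in\PW_a$. Then $f-h\in\PW_b$ has Fourier transform supported in $[-b,-a]\cup[a,b]$, so Plancherel gives
$$\|f-h\|_{L^2(\R)}^2=\int_{[-b,-a]\cup[a,b]}|\hat f(x)|^2\,dx.$$
Since $h\in\PW_a\subset\PW_a(\mu)$ and $f\perp_\mu\PW_a(\mu)$ by hypothesis, Cauchy--Schwarz gives $\|f\|_{L^2(\mu)}^2=\langle f,f-h\rangle_{L^2(\mu)}\le \|f\|_{L^2(\mu)}\|f-h\|_{L^2(\mu)}$, hence $\|f\|_{L^2(\mu)}\le \|f-h\|_{L^2(\mu)}$. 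The upper PW-sampling bound on $\PW_c$ then yields $\|f-h\|_{L^2(\mu)}\le C(c,\mu)\|f-h\|_{L^2(\R)}$, and chaining these estimates produces
$$\|f\|_{L^2(\mu)}^2\le C(c,\mu)^2\int_{[-b,-a]\cup[a,b]}|\hat f|^2\,dx.$$

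To pull the range of integration down to $[a,b]$ as written, I would exploit the Schwarz reflection $g\mapsto g^\#$. By axiom A3 each $\PW_t(\mu)$ is $\#$-invariant, and a short computation shows that orthogonality in $L^2(\mu)$ to the $\#$-invariant subspace $\PW_a(\mu)$ is preserved by $\#$, so $\PW_{[a,b]}(\mu)$ is itself $\#$-invariant. Writing $f=F+iG$ with $F=(f+f^\#)/2$ and $G=(f-f^\#)/(2i)$ --- both real-valued on $\R$ and in $\PW_{[a,b]}(\mu)$ --- the symmetries $|\hat F(-\xi)|=|\hat F(\xi)|$ and $|\hat G(-\xi)|=|\hat G(\xi)|$, together with $\|f\|_{L^2(\mu)}^2=\|F\|_{L^2(\mu)}^2+\|G\|_{L^2(\mu)}^2$, convert the symmetric estimate above into one involving only $\int_a^b|\hat f|^2$, up to a constant factor absorbed into $d$.

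The main obstacle is the $\#$-invariance step: verifying that the $L^2(\mu)$-orthogonal complement $\PW_{[a,b]}(\mu)$ is preserved by Schwarz reflection is the one point where one must genuinely use axiom A3 (together with the fact that $\PW_a(\mu)$ is itself $\#$-invariant), rather than just the inclusion $\PW_a\subset\PW_b$. Once that invariance is in hand, the remainder of the argument is a routine combination of Cauchy--Schwarz, the upper PW-sampling bound, and Plancherel's theorem.
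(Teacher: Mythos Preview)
Your first two paragraphs are correct and give the symmetric estimate
\[
\|f\|_{L^2(\mu)}^2\le C(c,\mu)^2\int_{a\le|\xi|\le b}|\hat f(\xi)|^2\,d\xi
\]
(one more application of the lower sampling bound converts the left side to $\|f\|_2^2$, which is what the lemma asks for). This is in fact a cleaner route than the paper's: the paper reaches the same symmetric estimate by passing through the truncated Toeplitz operators $L_{\mu,a},L_{\mu,b}$ and using invertibility of $L_{\mu,a}$ together with boundedness of $L_{\mu,b}$, whereas you use only Cauchy--Schwarz and the two sampling inequalities. Both arguments produce exactly the bound $\|f_b\|_2^2\gtrsim\|f\|_2^2$ with $f_b=f-h$.

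Your $\#$-reduction to the one-sided integral $\int_a^b$, however, is circular. With $F=(f+f^\#)/2$ and $G=(f-f^\#)/(2i)$ one has pointwise
\[
|\hat F(\xi)|^2+|\hat G(\xi)|^2=\tfrac12\bigl(|\hat f(\xi)|^2+|\hat f(-\xi)|^2\bigr),
\]
so $\int_a^b(|\hat F|^2+|\hat G|^2)=\tfrac12\int_{a\le|\xi|\le b}|\hat f|^2$, and you recover only the symmetric estimate you already had. This is unavoidable: the one-sided inequality as literally stated is false. For $\mu=m$ the space $\PW_{[a,b]}(\mu)$ is just $\PW_b\ominus\PW_a$, and any $f$ with $\hat f$ supported in $[-b,-a]$ lies in it while $\int_a^b|\hat f|^2=0$. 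The paper's own proof confirms this reading: it identifies ``the integral in the statement'' with $\|f_b\|_2^2=\int_{a\le|\xi|\le b}|\hat f|^2$, so the range $[a,b]$ in the displayed formula is a typo for the symmetric band. Drop the $\#$-step and your argument is complete for the statement the paper actually proves and uses.
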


\begin{proof}
	Let $f\in\PW_{[a,b]}(\mu)$. Note that then $f\in\PW_b$. 
	Denote by $f_a$ the orthogonal projection of $f$
	onto $\PW_a$ in $\PW_b$ and let $f_b=f-f_a$. Then the integral
	in the statement is $||f_b||_2^2$.
	
	Note that $f\in\PW_{[a,b]}(\mu)$ is equivalent to $L_{\mu,b}f\perp \PW_a$, which implies that $\supp \FF(L_{\mu,b}f)\cap (-a,a)=\emptyset$. 
	Since $L_{\mu,a}$ is invertible,
	$$||L_{\mu,b}f_b||_2^2=\int_{-b}^b |\FF(L_{\mu,b}f_b)(x)|^2dx\geq \int_{-a}^a |\FF(L_{\mu,b}f_b)(x)|^2dx=$$
	$$\int_{-a}^a |\FF(L_{\mu,b}(f-f_a))(x)|^2dx=
	\int_{-a}^a |\FF(L_{\mu,b}f_a)(x)|^2dx=||L_{\mu,a} f_a||_2^2
	>d_1 ||f_a||_2^2,$$
	for some $d_1=d_1(c)$.
		Since $L_{\mu,b}$ is bounded, it follows that
		$$||f_b||_2^2>d_2 ||f_a||_2^2,$$
		for some $d_2=d_2(c)$.
	\end{proof}	
	
	\begin{proof}[Proof of Theorem \ref{t1}] Since PW-sampling measures are $\Pi$-finite, $\mu$ is the spectral measure of some regular trace-normalized canonical system with Hamiltonian $\HH$. 
	We need to show that the set $\{t | \det\HH(t)=0\}$ has Lebesgue measure
	zero. If that is the case, we can renormalize the system with the change of 
	variable 
	$$s(t)=\int_0^t\sqrt{\det\HH}.$$
	
	To show that $|\{t | \det\HH(t)=0\}|=0$, recall that any $\Pi$-finite
	measure admits a unique chain of regular de Branges spaces. Since
	both $\PW_s(\mu)$ and the spaces $\BB_t$ corresponding to $\HH$ constitute
	such chains, $\PW_s=\BB_t$ for some $s=s(t)$. According to Krein's formula
	for the exponential type \eqref{eqType}, $s(t)$ is defined as above.
	
	Assume now that $|\{t | \det\HH(t)=0\}\cap (0,T)|>0$ for some $T>0$. Let $f\in L^2(\HH)$
	be a non-zero function such that  $$\supp f\subset\{t | \det\HH(t)=0\}\cap (0,T).$$ 
	It follows from the type formula that for any $\e>0$ one can choose disjoint intervals  
	$$[a_k,b_k]\subset (0,T)\setminus \supp f,\ 0\leq a_1<b_1<a_2<...<b_n\leq T$$  such that 
	$$\sum_{k=1}^n s(b_k)-s(a_k)=s(T)-\e.$$
	Put $F=\WW f$. 
	Then $F\in \PW_{s(T)}$ and $\supp\hat F\subset [0,s(T)]$. According to the last lemma,
	$$\int_0^{s(a_1)} |\hat F(x)|^2dx+\int_{s(b_1)}^{s(a_2)}|\hat F(x)|^2dx+...+
	\int_{s(b_n)}^{s(T)}|\hat F(x)|^2dx>d||F||_2^2.$$
	Recall that the Weyl transform $\WW$ is unitary and therefore $||F||_2^2>0$. Since 
	$$\e=\left|\ [0,s(T)]\setminus \cup [s(a_k),s(b_k)]\ \right|$$
	 can be chosen arbitrarily small, we obtain a contradiction.

	For the second part of the statement, notice that if $\HH$ and $\check\HH$
	have the same $\PW$-sampling spectral measure, then the det-normalization for
	both Hamiltonians implies $\BB_t=\check \BB_t=\PW_t(\mu)$ for all $t$.
	Then for the HB functions $E_t=A_t-iC_t$ and $\check A_t - i\check C_t$  we have
	
\begin{equation}
\begin{pmatrix}
	\check A_t\\ \check C_t
\end{pmatrix}
=R^{-1} \begin{pmatrix}
	A_t\\C_t
\end{pmatrix}
\label{eqmat}\end{equation}
fore some constant matrix $R\in {\rm SL}(2,\R)$, see for instance \cite{Rem}, 
Theorem 2.2. 

Note that if $\begin{pmatrix}
	A_t\\C_t
\end{pmatrix}$ is a solution to a CS with a Hamiltonian $\HH$ then 
$R^{-1}\begin{pmatrix}
	A_t\\C_t
\end{pmatrix}  $
is a solution to a system with the Hamiltonian $R^*\HH R$. 
Here we use the property that any $SL(2,\R)$ matrix is symplectic, i.e., $R^*\Omega R=\Omega$.

From the uniqueness of systems on $(0,t)$ with the same HB-function corresponding to $t$ (see for instance Theorem 13 in \cite{Rom}) it follows that all $\begin{pmatrix}
\check A_s\\ \check C_s
\end{pmatrix},\ s<t$ satisfy \eqref{eqmat} with $t=s$ and the same constant matrix $R$. Choosing larger $t$ we conclude that \eqref{eqmat}
must hold with the same $R$ for all $t$. Finally, the property that 
$$\begin{pmatrix}
	A_t\\C_t
\end{pmatrix}(0)=
\begin{pmatrix}
	\check A_t\\ \check C_t
\end{pmatrix}(0)=
\begin{pmatrix}
	1 \\ 0
\end{pmatrix}$$
together with $R\in {\rm SL}(2,\R)$ imply that $R$ is upper triangular with $1$ on the diagonal.
	\end{proof}

  \bs\subsection{Reprokernels}\label{secRep} Let $\mu$ be a sampling measure for $\PW_t$ for some fixed $t>0$.
  Consider the de Branges space $\BB=\PW_t(\mu)$, $\BB\doteq{\rm PW}_t$. Then the truncated Toeplitz operator $L=L_{\mu,t}$,  $L:{\rm PW}_t\to{\rm PW}_t$ defined as in the last
  section is bounded and invertible. For $w\in\C$ we denote by $K_w$ and $\mathring K_w$ the reprokernels in $\BB$ and PW$_t$ respectively.

\begin{thm}\label{t2} For all $w\in \C$, 
$$j\left[L^{-1}\mathring K_w\right]=K_w.$$\end{thm}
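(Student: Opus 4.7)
The plan is to exploit the identity $j_t^* j_t = L_\mu$ established in the preceding theorem, together with the uniqueness of the reproducing kernel in $\BB = \PW_t(\mu)$. Since $\mu \in \PW$, the operator $L = L_{\mu,t}$ is invertible on $\PW_t$ by Lemma \ref{lemL}, so $L^{-1}\mathring{K}_w$ is a well-defined element of $\PW_t$, and its image under $j$ lies in $\BB$.

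To verify that $j[L^{-1}\mathring{K}_w]$ is the reproducing kernel of $\BB$ at $w$, I would pick an arbitrary $f \in \BB$ and write $f = jg$ where $g \in \PW_t$ is literally the same entire function (the two spaces coincide as sets). The reproducing property for $g$ in $\PW_t$ gives $(g,\mathring{K}_w)_{\PW_t} = g(w) = f(w)$. Then I would compute
\begin{align*}
\bigl(f,\, j[L^{-1}\mathring{K}_w]\bigr)_{\BB}
&= \bigl(jg,\, j[L^{-1}\mathring{K}_w]\bigr)_{\BB} \\
&= \bigl(g,\, j^*j\, L^{-1}\mathring{K}_w\bigr)_{\PW_t} \\
&= \bigl(g,\, L\, L^{-1}\mathring{K}_w\bigr)_{\PW_t} \\
&= (g, \mathring{K}_w)_{\PW_t} = f(w).
\end{align*}
Since $f \in \BB$ was arbitrary and reproducing kernels are uniquely determined by the reproducing property, this forces $j[L^{-1}\mathring{K}_w] = K_w$.

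There is no real obstacle here; the statement is essentially a restatement of the relation $j^*j = L_\mu$ at the level of reproducing kernels. The only point one should be careful about is distinguishing between elements of $\PW_t$ and their images under $j$ in $\BB$: although the two coincide as entire functions, the pairing that computes the reproducing-kernel identity lives in $\BB$, so one must translate it to the $\PW_t$-inner product via $j^*$ before the invertibility of $L$ can be applied. Once this bookkeeping is done the calculation is a one-liner.
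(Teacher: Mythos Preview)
Your proof is correct and essentially identical to the paper's: both rely on the identity $j^*j=L$ and the reproducing property, with the only cosmetic difference that the paper starts from $K_w$ and shows $Lj^{-1}K_w=\mathring K_w$, whereas you start from $j[L^{-1}\mathring K_w]$ and verify it reproduces.
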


\begin{proof} For any $f$ and $g$ in PW$_t$ we have
$$ (jf,jg)_\BB=(f,Lg)_{\rm PW}.$$
For $g=j^{-1} K_w$ we obtain
$$(jf, K_w)_\BB=(f, Lj^{-1}K_w)_{\rm PW}.$$
The left-hand side is equal to
$$f(w)=(f, \mathring K_w)_{\rm PW},$$ and since $f$ is arbitrary, we get
$$Lj^{-1}K_w=\mathring K_w.$$\end{proof}

In the formulas for the inverse spectral problem the last statement will only be used in the case $w=0$. We will
use the notations $k_t,\ \mathring k_t$ for the  reprokernels $K_0,\ \mathring K_0$ in $\BB_t$ and PW$_t$ respectively. Since
$$\mathring k_t(z)=\frac1\pi~\frac{\sin tz}z,$$
Theorem \ref{t2} implies
 \begin{equation}\qquad k_t=j_t\left[L^{-1}_{\mu,t} \mathring k_t\right].\label{eqSinc}\end{equation}

  \bs\subsection{Recovering the term $h_{11}(t)$} \label{sech11}
  In this section we obtain a formula which recovers the element $h_{11}$ of the Hamiltonian 
  $$\HH=\begin{pmatrix}h_{11}&h_{12}\\h_{12}&h_{22}\end{pmatrix}$$
   in CS \eqref{eq001} from
  the spectral measure.
 
  Note that for det-normalized CS with even spectral measures this amounts to a full solution for the inverse spectral problem as  $h_{12}\equiv 0$ by Theorem \ref{t3} and $h_{11}h_{22}=1$.
  As was mentioned before, systems with even measures represent Krein's string operators.

 As before, we assume that $\mu$ is a PW-sampling measure. Recall that by Theorem \ref{t1}, $h_{11}$ is determined by $\mu$ in a unique way (unlike $h_{22}$ or $h_{12}$).

  \begin{thm} Let $\mu\in\PW$ be the spectral measure of a system \eqref{eq001} with the Hamiltonian $\HH$. Then $t\mapsto k_t(0)$ is an absolutely continuous
  	function and 
  $$h_{11}(t)=h^\mu(t):=\pi\frac d{dt} k_t(0),$$
 where the kernel $k_t$ is given by \eqref{eqSinc}.
\end{thm}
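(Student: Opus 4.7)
The plan is to identify the reprokernel $k_t$ as the Weyl transform of an explicit element of $L^2(\HH,[0,t])$ and then exploit the isometry of $\WW$ to evaluate $k_t(0)$ directly as an integral of $h_{11}$.

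First I would invoke \eqref{eq007}, which identifies the reproducing kernel $k_t = K_0$ in $\BB_t$ as the Weyl transform of the solution vector at spectral parameter zero. Setting $z = 0$ in the canonical system \eqref{eq001} gives $\Omega \dot X = 0$, so the first column of the matrizant is constant in $t$, equal to its Neumann initial value $\begin{pmatrix} 1 \\ 0 \end{pmatrix}$; thus $A_s(0) \equiv 1$ and $C_s(0) \equiv 0$. Matching this with the definition of $\WW$ in Section \ref{secW}, one reads off that the preimage of $k_t$ in $L^2(\HH,[0,t])$ is
$$X_0^{(t)}(s) \;=\; \frac{1}{\sqrt{\pi}} \begin{pmatrix} 1 \\ 0 \end{pmatrix}, \qquad s \in [0, t].$$

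Next, combining the reproducing property at $0$ with the isometry of the Weyl transform \eqref{eqW} gives
$$k_t(0) \;=\; \langle k_t, k_t \rangle_{\BB_t} \;=\; \|X_0^{(t)}\|^2_{L^2(\HH,[0,t])} \;=\; \frac{1}{\pi} \int_0^t \left\langle \HH(s) \begin{pmatrix} 1 \\ 0 \end{pmatrix}, \begin{pmatrix} 1 \\ 0 \end{pmatrix} \right\rangle ds \;=\; \frac{1}{\pi} \int_0^t h_{11}(s)\, ds.$$
Since $\HH \in L^1_{\mathrm{loc}}$, so is $h_{11}$, and the representation of $\pi k_t(0)$ as an indefinite integral immediately yields that $t \mapsto k_t(0)$ is absolutely continuous with $\pi\, \frac{d}{dt} k_t(0) = h_{11}(t)$ a.e.

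The only delicate step is the identification of $X_0^{(t)}$ under $\WW$, which amounts to matching the Weyl transform formula in Section \ref{secW} with the reproducing identity $F(\lambda) = \langle F, K_\lambda \rangle_{\BB_t}$ --- this is essentially the content of \eqref{eq007}. A purely computational alternative that sidesteps the Weyl transform uses the explicit formula for $K(\lambda,z)$ from Section \ref{sec1}: from $A_t(0) = 1$ and $C_t(0) = 0$ one obtains $k_t(z) = C_t(z)/(\pi z)$, hence $k_t(0) = C_t'(0)/\pi$; differentiating the scalar equation $\dot C_t = z(h_{11} A_t + h_{12} C_t)$ in $z$ and evaluating at $z=0$ yields $\partial_t C_t'(0) = h_{11}(t)$, giving the same conclusion.
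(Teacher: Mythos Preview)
Your argument is correct and follows essentially the same path as the paper: both invoke \eqref{eq007} and the isometry of $\WW$ to obtain $\pi k_t(0)=\int_0^t h_{11}$, then differentiate. The paper phrases this as the Christoffel--Darboux identity $\int_0^t\langle \HH X_z,X_w\rangle\,ds=\pi K_w(z,t)$ for general $z,w$ before specializing to $z=w=0$, whereas you go straight to $\lambda=0$; the content is identical. Your second route via $k_t(z)=C_t(z)/(\pi z)$ and the differentiated system is a clean alternative that bypasses the Weyl transform entirely.
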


  \begin{proof} From \eqref{eq007} we obtain the "Christoffel-Darboux" formula 
$$\int_0^t\langle\HH(s) X_z(s), X_w(s)\rangle~ds=\pi K_w(z,t).$$
It follows that
$$\langle\HH(t) X_z(t), X_w(t)\rangle=\pi \dot K_w(z,t).$$
If $z=w=0$, then $X_0(t)=(1,0)^{\scriptscriptstyle T}$, and
$$\langle\HH(t) X_0(t), X_0(t)\rangle=h_{11}(t).$$\end{proof}

\begin{remark} The nesting property $\BB_{t_1}\overset{\rm iso}\subset \BB_{t_2}$ for $t_t<t_2$, together with
$$k_t(0)=\sup\{f(0)|\ f\in\BB_t, ||f||_{\BB_t}=1\},$$ 
 implies that the function $t\mapsto k_t(0)$ is increasing, so the derivative in the definition of $h^\mu$ exists a.e.
 The same can be seen from our proof. The det-normalization implies $h^\mu>0$ a.e.

 $\triangle$
\end{remark}

\ss\begin{cor} If $\mu\in$PW is even, then $$\HH=\begin{pmatrix}h^\mu&0\\0&h_{\mu}^{-1}\end{pmatrix},$$
	where $h_\mu$ is from the last theorem,  is the unique det-normalized diagonal Hamiltonian such that $\mu$ is the spectral measure of \eqref{eq001}.\end{cor}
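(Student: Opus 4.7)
The plan is to assemble existence, formula, and uniqueness from Theorems \ref{t1} and \ref{t3} together with the recovery formula for $h_{11}$ established just above.

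First, for existence of some det-normalized diagonal Hamiltonian realizing $\mu$, I would invoke Theorem \ref{t3}: since $\mu$ is even and $\Pi$-finite, there is a diagonal Hamiltonian $\HH'=\mathrm{diag}(h_1',h_2')$ with spectral measure $\mu$. The det-normalization change of variable $s(t)=\int_0^t\sqrt{\det\HH'}$ from Section \ref{secCSPW} transforms $\HH'$ into $\tilde\HH(s)=\HH'(t(s))\dot t(s)$, which is again diagonal (a scalar multiple of a diagonal matrix) and now det-normalized. So a det-normalized diagonal Hamiltonian $\HH=\mathrm{diag}(h_{11},h_{22})$ with spectral measure $\mu$ exists.

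Next, I would identify the entries. The preceding theorem yields $h_{11}=h^\mu$, and the det-normalization constraint $h_{11}h_{22}=1$ forces $h_{22}=h_\mu^{-1}$, giving the claimed form of $\HH$.

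Finally, for uniqueness, suppose $\check\HH$ is another det-normalized diagonal Hamiltonian with spectral measure $\mu$. By the uniqueness part of Theorem \ref{t1}, there is $k\in\R$ with $\check\HH=T_k^*\HH T_k$; the explicit expression in Remark \ref{remHk} (with $h_{12}=0$) reduces to
$$\HH_k=\begin{pmatrix}h_{11}&kh_{11}\\kh_{11}&h_{22}+k^2h_{11}\end{pmatrix}.$$
Diagonality of $\check\HH$ forces $kh_{11}\equiv 0$; since $h_{11}=h^\mu>0$ a.e. under det-normalization, we conclude $k=0$, hence $\check\HH=\HH$. The only step requiring any care is the first, where one must check that det-normalization preserves diagonality; this is immediate from the change-of-variable formula, so the corollary is essentially a direct combination of the preceding results.
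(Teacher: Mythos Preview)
Your proof is correct and is exactly the argument the paper leaves implicit (the corollary is stated without proof there): existence of a diagonal det-normalized realization via Theorem~\ref{t3}/Theorem~\ref{t1}, identification of the entries from the $h_{11}$ recovery theorem and $\det\HH=1$, and uniqueness from the $\HH_k$ classification in Theorem~\ref{t1} together with $h^\mu>0$ a.e. One minor streamlining: since the proof of Theorem~\ref{t3} already proceeds through Theorem~\ref{t1} and the $T_k$-conjugation (which preserves $\det$), the diagonal Hamiltonian it produces is automatically det-normalized, so your separate det-normalization step is not actually needed.
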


\bs\subsection{Conjugate kernels}\label{secCK} Let $\HH$ be a det-normalized PW Hamiltonian and let $\BB_t$ and $\tilde \BB_t$ be the corresponding chains of dB-spaces generated by HB functions $E_t=A_t-iC_t$ and $\ti E_t=B_t-i D_t$. We drop the subindex $t$ in our formulas when
$t$ is fixed.

As before, we denote by $K_t(z,w)$ and $\tilde K_t(z,w)$ the corresponding reprokernels, sometimes using abbreviated notations $K_w(z),\ \ti K_w(z)$ when $t$ is fixed. Due to a special role played by reprokernels at 0, we also use the notation
$$k_t(z):= K_t(z,0)=\frac1\pi\frac{C_t(z)}z.$$
We now introduce "conjugate" kernels $\ti L_w(z)\equiv \ti L_t(z,w)\in\ti \BB_t$ defined as
$$\tilde L_w(z)=\frac1\pi~\frac{[D(z)A(\bar w)-B(z) C(\bar w)]-1}{z-\bar w}.$$
In particular
$$\tilde l_t(z):= \tilde L_t(z,0)=\frac1\pi~\frac{D(z)-1}z.$$

As before, we denote by $X_z(t)$ and $Y_z(t)$ the solutions to CS \eqref{eq001} with Neumann and Dirichlet initial conditions correspondingly, 
$$X_z(t):=\begin{pmatrix}
	A_t(z) \\ C_t(z)
\end{pmatrix}, Y_z(t):=
\begin{pmatrix}
	B_t(z) \\ D_t(z)
\end{pmatrix}
.$$

 The following relation presents a special case of Lagrange
identities. 

\begin{lem}\label{t1500} $$\pi\tilde L_t(z,w)=(Y_z,X_w)_{L^2(\HH, [0,t])}$$\end{lem}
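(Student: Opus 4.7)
The approach is to derive a Lagrange-type identity from the system \eqref{eq001} using the symplectic form $\Omega$, then pair it with the explicit initial conditions to produce the closed-form boundary term $-1$ that matches the constant in $\tilde L_t$.

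First I would unwind the inner product. Since $\HH$ is real symmetric and $A_s, B_s, C_s, D_s$ are real entire functions, $\overline{X_w(s)} = X_{\bar w}(s)$, so
$$(Y_z, X_w)_{L^2(\HH,[0,t])} = \int_0^t \langle \HH(s) Y_z(s), X_w(s)\rangle\, ds = \int_0^t Y_z(s)^{\scriptscriptstyle T} \HH(s) X_{\bar w}(s)\, ds.$$
This reduces the problem to evaluating a bilinear integral with holomorphic dependence on $z, w$.

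The key step is the Wronskian-type identity. Both $Y_z$ and $X_{\bar w}$ are columns of matrizants (for parameters $z$ and $\bar w$), so $\Omega \dot Y_z = z\HH Y_z$ and $\Omega \dot X_{\bar w} = \bar w \HH X_{\bar w}$. Using $\Omega^{\scriptscriptstyle T} = -\Omega$ and $\Omega^2 = -I$, a direct differentiation gives
$$\frac{d}{ds}\left(Y_z^{\scriptscriptstyle T} \Omega X_{\bar w}\right) = \dot Y_z^{\scriptscriptstyle T} \Omega X_{\bar w} + Y_z^{\scriptscriptstyle T} \Omega \dot X_{\bar w} = (\bar w - z)\, Y_z^{\scriptscriptstyle T} \HH X_{\bar w}.$$
(One has $\dot Y_z^{\scriptscriptstyle T} \Omega = -z\, Y_z^{\scriptscriptstyle T} \HH$ and $\Omega \dot X_{\bar w} = \bar w \HH X_{\bar w}$.) Integrating from $0$ to $t$ yields
$$Y_z(t)^{\scriptscriptstyle T} \Omega X_{\bar w}(t) - Y_z(0)^{\scriptscriptstyle T} \Omega X_{\bar w}(0) = (\bar w - z)\int_0^t Y_z^{\scriptscriptstyle T} \HH X_{\bar w}\, ds.$$

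The final step is to evaluate the boundary terms using the Neumann/Dirichlet initial conditions $X_{\bar w}(0)=(1,0)^{\scriptscriptstyle T}$, $Y_z(0)=(0,1)^{\scriptscriptstyle T}$, which produce $Y_z(0)^{\scriptscriptstyle T} \Omega X_{\bar w}(0) = -1$. At $s=t$ the pairing equals $B_t(z)C_t(\bar w) - D_t(z)A_t(\bar w)$. Therefore
$$(Y_z, X_w)_{L^2(\HH,[0,t])} = \frac{B_t(z)C_t(\bar w) - D_t(z)A_t(\bar w) + 1}{\bar w - z} = \frac{D_t(z)A_t(\bar w) - B_t(z)C_t(\bar w) - 1}{z - \bar w} = \pi \tilde L_t(z,w),$$
which is the claimed identity. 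As a sanity check, the same computation with $Y_z$ replaced by $X_z$ reproduces the Christoffel-Darboux formula $\int_0^t \langle \HH X_z, X_w\rangle\, ds = \pi K_w(z,t)$ of Section \ref{sech11}; the constant $+1$ here, absent in the $K_w$ case, comes precisely from the mismatched Neumann/Dirichlet initial data.

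The only real obstacle is bookkeeping: keeping straight the signs that arise from $\Omega^{\scriptscriptstyle T} = -\Omega$ and from whether the spectral parameter lands as $z$ or $\bar w$ after conjugation, so that the non-vanishing boundary term $-1$ gets combined with the $t$-endpoint term with the correct sign to match the symmetric form of $\tilde L_t(z,w)$.
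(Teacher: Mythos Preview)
Your proof is correct and is essentially the same Lagrange-identity argument as the paper's: the paper first proves the matrix identity $\frac{\partial}{\partial t}[M_t^*(w)\Omega M_t(z)]=(z-\bar w)M_t^*(w)\HH M_t(z)$ and then extracts the $(1,2)$ entry, while you work directly with the columns $Y_z$ and $X_{\bar w}$, but the underlying computation and boundary evaluation are identical.
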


 To prove the lemma we first establish the following relation for
 the transfer matrix.
 
 \begin{lem}
 $$	\frac \partial{\partial t} [M_t^*(w)\Omega M_t(z)]=(z-\bar w)M^*_t(w)\Omega M_t(z)
 .$$ 
 \end{lem}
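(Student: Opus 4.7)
My plan is to apply the Leibniz product rule to $M_t^*(w)\,\Omega\,M_t(z)$ and then reduce $\dot M_t(z)$ and $\dot M_t^*(w)$ to algebraic expressions in $M_t$ using the canonical system equation. Solving $\Omega\dot M = z\HH M$ for $\dot M$ and using $\Omega^{-1}=-\Omega$ gives
$$\dot M_t(z) = -z\,\Omega\,\HH(t)\,M_t(z).$$

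Next I would take the adjoint of this identity, this time with spectral parameter $w$ in place of $z$. Because $\HH$ is real symmetric (so $\HH^*=\HH$), $\Omega^*=-\Omega$, and complex conjugation sends $w\mapsto \bar w$, the adjoint reads
$$\dot M_t^*(w) = \bar w\,M_t^*(w)\,\HH(t)\,\Omega.$$
The appearance of $\bar w$ here (rather than $w$) is exactly what produces the antiholomorphic term in the final identity.

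Plugging both derivatives into the Leibniz expansion gives
\begin{align*}
\frac{\partial}{\partial t}\bigl[M_t^*(w)\,\Omega\,M_t(z)\bigr]
&= \dot M_t^*(w)\,\Omega\,M_t(z) + M_t^*(w)\,\Omega\,\dot M_t(z)\\
&= \bar w\,M_t^*(w)\,\HH(t)\,\Omega^2\,M_t(z) - z\,M_t^*(w)\,\Omega^2\,\HH(t)\,M_t(z),
\end{align*}
and the identity $\Omega^2 = -I$ collapses this to $(z-\bar w)\,M_t^*(w)\,\HH(t)\,M_t(z)$.

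The only real obstacle is sign-bookkeeping: one has to track the simultaneous interaction of $\Omega^{-1}=-\Omega$, $\Omega^*=-\Omega$, $\Omega^2=-I$, the symmetry of $\HH$, and the conjugation of the spectral parameter under the adjoint. I note that, for this lemma to feed directly into the integration against $\langle \HH Y_z, X_w\rangle$ needed to prove Lemma \ref{t1500}, the right-hand side should read $(z-\bar w)\,M_t^*(w)\,\HH(t)\,M_t(z)$ rather than $(z-\bar w)\,M_t^*(w)\,\Omega\,M_t(z)$; the displayed identity in the statement appears to contain a typographical substitution of $\Omega$ for $\HH$.
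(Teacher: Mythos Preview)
Your proof is correct and follows essentially the same route as the paper: apply the Leibniz rule and substitute using the canonical system equation together with $\Omega^*=-\Omega$ and $\HH^*=\HH$. The paper keeps the combination $\Omega\dot M_t(z)=z\HH M_t(z)$ intact rather than first solving for $\dot M_t(z)$, but this is only a cosmetic difference. Your observation about the typo is also correct: the right-hand side should be $(z-\bar w)\,M_t^*(w)\,\HH(t)\,M_t(z)$, and indeed the paper's own proof arrives at $zM^*_t(w)\HH M_t(z) -\bar w M_t^*(w)\HH^* M_t(z)$ in the penultimate step before the same typographical slip reappears in the last expression; the application to Lemma~\ref{t1500} confirms that $\HH$ is intended.
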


\begin{proof}
$$	\frac \partial{\partial t} [M_t^*(w)\Omega M_t(z)]=
 + 	M_t^*(w)\Omega\frac \partial{\partial t}  M_t(z)] +\frac \partial{\partial t} [M_t^*(w)\Omega ]M_t(z)=$$$$
 zM^*_t(w)\HH M_t(z) -\bar w M_t^*(w)\HH^* M_t(z)=(z-\bar w)M^*_t(w)\Omega M_t(z)
 .$$ 
\end{proof}

\begin{proof}[Proof of Lemma \ref{t1500}]
	$$(Y_z,X_w)_{L^2(\HH, [0,t])}=
	\int_0^t <\HH Y_z(s),X_w(s)>ds=$$$$=
	\int_0^t\left<\HH M_s(z)\begin{pmatrix} 0\\ 1 \end{pmatrix},M_s(w)\begin{pmatrix} 1\\ 0 \end{pmatrix}\right>ds=$$$$=
	\int_0^t\left<M^*_s(w)\HH M_s(z)\begin{pmatrix} 0\\ 1 \end{pmatrix},\begin{pmatrix} 1\\ 0 \end{pmatrix}\right>ds=
	$$$$
	\int_0^t \left<\frac{\frac \partial{\partial s} [M_s^*(w)\Omega M_s(z)]}{z-\bar w}\begin{pmatrix} 0\\ 1 \end{pmatrix},\begin{pmatrix} 1\\ 0 \end{pmatrix}\right>ds=$$$$=
	\frac{\frac \partial{\partial t} [M_t^*(w)\Omega M_t(z)]-1}{z-\bar w}
	=\frac{[D(z)A(\bar w)-B(z) C(\bar w)]-1}{z-\bar w}.$$
\end{proof}

\begin{cor} $$h_{12}(t)=\pi\frac d{dt}~\tilde l_t(0).$$\end{cor}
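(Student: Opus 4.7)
The plan is to mimic the derivation of the formula $h_{11}(t)=\pi\,(d/dt)\,k_t(0)$ from the preceding subsection, substituting the Lagrange-type identity of Lemma \ref{t1500} for the Christoffel--Darboux formula. Concretely, Lemma \ref{t1500} says
$$\pi\tilde L_t(z,w)=\int_0^t \langle \HH(s)Y_z(s),X_w(s)\rangle\,ds,$$
and the right-hand side is, as a function of $t$, an absolutely continuous antiderivative of the integrand in the $\HH$-pairing. Differentiating yields
$$\pi\,\frac{\partial}{\partial t}\tilde L_t(z,w)=\langle \HH(t)Y_z(t),X_w(t)\rangle.$$

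Next, I would specialize to $z=w=0$. At the spectral parameter $z=0$ the canonical system \eqref{eq001} reduces to $\Omega\dot X=0$, so $X$ and $Y$ are constant in $t$; combined with the Neumann and Dirichlet initial conditions this gives
$$X_0(t)\equiv\begin{pmatrix}1\\0\end{pmatrix},\qquad Y_0(t)\equiv\begin{pmatrix}0\\1\end{pmatrix}.$$
Consequently
$$\langle \HH(t)Y_0(t),X_0(t)\rangle=\left\langle \HH(t)\begin{pmatrix}0\\1\end{pmatrix},\begin{pmatrix}1\\0\end{pmatrix}\right\rangle=h_{12}(t).$$
Since $\tilde L_t(0,0)=\tilde l_t(0)$ by definition, this yields the claimed formula
$$h_{12}(t)=\pi\,\frac{d}{dt}\tilde l_t(0).$$

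There is essentially no obstacle here beyond justifying the differentiation in $t$ of the integral representation; this is routine because $s\mapsto\langle \HH(s)Y_z(s),X_w(s)\rangle$ is locally integrable (the entries of $\HH$ are in $L^1_{\mathrm{loc}}$ and $X_w,Y_z$ are continuous in $s$), so the map $t\mapsto\tilde L_t(z,w)$ is absolutely continuous with derivative equal to the integrand a.e. The only thing worth double-checking is the evaluation $X_0(t)=(1,0)^{\scriptscriptstyle T}$ and $Y_0(t)=(0,1)^{\scriptscriptstyle T}$, which is immediate once one notices that the matrizant $M_t(0)$ satisfies $\Omega\dot M=0$ with $M_0(0)=I$, hence $M_t(0)\equiv I$ for all $t$.
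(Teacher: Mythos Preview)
Your proof is correct and follows essentially the same route as the paper: apply Lemma \ref{t1500}, set $z=w=0$ so that $Y_0=(0,1)^{\scriptscriptstyle T}$ and $X_0=(1,0)^{\scriptscriptstyle T}$, and differentiate the resulting integral $\int_0^t h_{12}$ in $t$. The only cosmetic difference is that you differentiate first and then specialize to $z=w=0$, whereas the paper specializes first; your added remarks on absolute continuity and on $M_t(0)\equiv I$ are welcome justifications that the paper leaves implicit.
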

\begin{proof} If $z=w=0$, then
$$(Y_z,X_w)_{L^2(\HH, [0,t])}=\int_0^t\left\langle\HH\begin{pmatrix} 0\\1\end{pmatrix},\begin{pmatrix} 1\\0\end{pmatrix}\right\rangle=\int_0^t h_{12}.$$
\end{proof}

 Our next goal is to express the conjugate kernel $\tilde l_t$ in terms of the spectral measure which will allow us to recover the element $h_{12}$ of the 
 Hamiltonian via the last corollary, see Theorem \ref{t007}. 

\begin{thm}\label{t4} For each $t$ there exists a linear unitary operator
$$T_t:~\BB_t\to \tilde \BB_t$$
such that
$$\forall w\in \C,\quad K_w\mapsto \tilde L_w.$$
If $t_1<t_2$, then $T_{t_1}$ coincides with the restriction of $T_{t_2}$ to $\BB_{t_1}$.\end{thm}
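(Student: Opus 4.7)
The plan is to realize $T_t$ as a composition of Weyl-type transforms, using the dual Weyl transform associated with the Dirichlet initial condition at the origin. Recall from \eqref{eqW}--\eqref{eq007} that $\WW_t: L^2(\HH,[0,t]) \to \BB_t$ is a unitary with $\WW_t X_w = K_w$ for every $w \in \C$. The Dirichlet analogue $\tilde\WW_t$ is obtained by substituting the Dirichlet row $Y_{\bar z}(s)^{\scriptscriptstyle T} = (B_s(\bar z), D_s(\bar z))$ for the Neumann row $X_{\bar z}(s)^{\scriptscriptstyle T} = (A_s(\bar z), C_s(\bar z))$:
\begin{equation*}
	\tilde \WW_t f (z) := \frac{1}{\sqrt{\pi}} \int_0^t \langle \HH(s) f(s),\, Y_{\bar z}(s) \rangle\, ds.
\end{equation*}
The same arguments as in Sections \ref{secSM}--\ref{secW}, applied now to the initial condition $\begin{pmatrix}0\\1\end{pmatrix}$ and its spectral measure $\tilde\mu$, show that $\tilde\WW_t$ is a unitary isomorphism $L^2(\HH,[0,t]) \to \tilde\BB_t$. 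The crucial identity I need is the analogue of \eqref{eq007} for Neumann data,
\begin{equation*}
	\tilde\WW_t X_w \;=\; \tilde L_w \qquad (w \in \C),
\end{equation*}
which follows from Lemma \ref{t1500}: unfolding the definition, using $\HH^{\scriptscriptstyle T} = \HH$ together with $\overline{Y_{\bar z}} = Y_z$ (the matrizant has real entire entries), one rewrites $\tilde\WW_t X_w(z)$ as a constant multiple of $(Y_z, X_w)_{L^2(\HH,[0,t])}$, which by Lemma \ref{t1500} is the corresponding multiple of $\tilde L_w(z)$. If a conjugation discrepancy arises for complex $w$, it is resolved by noting that both sides depend anti-holomorphically on $w$ and agree for $w \in \R$.

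With these ingredients, set
\begin{equation*}
	T_t \;:=\; \tilde\WW_t \circ \WW_t^{-1} \,:\, \BB_t \longrightarrow \tilde\BB_t.
\end{equation*}
As a composition of unitaries, $T_t$ is automatically unitary, and combining $\WW_t^{-1} K_w = X_w$ with the identity above gives $T_t K_w = \tilde\WW_t X_w = \tilde L_w$ for every $w \in \C$, as required.

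For the nesting $T_{t_1} = T_{t_2}|_{\BB_{t_1}}$ when $t_1 < t_2$, observe that by \eqref{eqW} the isometric inclusion $\BB_{t_1} \subset \BB_{t_2}$ corresponds, under $\WW$, to the natural inclusion $L^2(\HH,[0,t_1]) \hookrightarrow L^2(\HH,[0,t_2])$ by extension by zero. If $f \in \BB_{t_1}$, then $g := \WW_{t_2}^{-1} f = \WW_{t_1}^{-1} f$ vanishes on $(t_1,t_2]$, so the integral defining $\tilde\WW_{t_2} g$ over $[0,t_2]$ collapses to the integral defining $\tilde\WW_{t_1} g$ over $[0,t_1]$, giving $T_{t_2} f = T_{t_1} f$.

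The only substantive analytical step is the identification $\tilde\WW_t X_w = \tilde L_w$, where the Lagrange identity of Lemma \ref{t1500} does all the work and one must track the complex-conjugation conventions built into the definition of the Weyl transform — specifically the use of $Y_{\bar z}$ rather than $Y_z$ in the inner product, dictated by the requirement that $\tilde\WW_t X_w$ be entire in $z$. Everything else — unitarity of $T_t$, the action on reproducing kernels, and the compatibility of the family $\{T_t\}$ — then follows formally from the structure of the two Weyl transforms.
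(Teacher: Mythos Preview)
Your proposal is correct and follows exactly the same route as the paper: define $T_t = \tilde\WW_t \circ \WW_t^{-1}$ via the two Weyl transforms, so that unitarity, the action $K_w\mapsto\tilde L_w$ (via Lemma \ref{t1500}), and the nesting all follow formally. The paper's proof is a single sentence stating this definition; you have simply spelled out the verifications it leaves implicit.
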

\begin{proof} The  operator $T_t$ can be defined via the formula
$$\tilde W_t=T_t\circ \WW_t,$$
where $\WW_t: L^2(\HH, [0,t])\to \BB_t$ and $\tilde\WW_t: L^2(\HH, [0,t])\to \tilde\BB_t$ are the restrictions of the corresponding Weyl transforms.\end{proof}

\begin{remark} We emphasize the operators $T_t$ (and the spaces $\tilde\BB_t$) are not determined by the spectral measure; they also depend on the choice of $\HH$. If we replace $\HH$ by $\check\HH=\HH_k$ (see Remark \ref{remHk}), then we get
$$\check T_t=T_t+kI.$$

$\triangle$
\end{remark}

\bs\subsection{Hilbert transform}\label{secHT} For a $\Pi$-finite measure $\mu$ and $f\in L^2(|\mu|)$ we will use the notation
$$ K(f\mu)(z)=\frac1{\pi }\int\frac{f(s)~d\mu(s)}{s-z},$$
and
$$\KK\mu(z)=\frac1{\pi }\int\left[\frac1{s-z}-\frac s{1+s^2}\right]~d\mu(s),$$
where $z\in\C\setminus\R$. If $f\in L^2(\mu)$ is an entire function, then we define
$$H^\mu f=K(f\mu)-f\KK\mu.$$
It is clear that $H^\mu f$ extends to an entire function:
$$
(H^\mu f)(z)=\frac1\pi\int\left[\frac{f(s)-f(z)}{s-z}+\frac{s f(z)}{1+s^2}\right]~d\mu(s).$$

\begin{example} If $\mu=m$, then  $H^\mu f$ is the analytic continuation of the Hilbert transform
$$Hf(z)=\frac1\pi {\rm v.p.}\int_\R\frac{f(s)ds}{s-z}$$
and
$$H^\mu:~{\rm PW}_t\to  {\rm PW}_t$$
is a unitary operator.
\end{example}

\begin{remark}\label{rem007}
	If $\mu$ is a periodic locally finite measure then
	$$\int\frac{s}{1+s^2}~d\mu(s)=c$$
	exists and
	$H^\mu$ takes on the form
	$$H^\mu f=Kf\mu-fK\mu +cf=\frac1\pi\int\frac{f(s)-f(z)}{s-z}d\mu(s)+cf.$$

$\triangle$
\end{remark}


\begin{thm}\label{t5}  Let $\HH$ be a det-normalized PW Hamiltonian, and let $\mu$ be the spectral measure. Let $\BB_t,\ \ti\BB_t$ be the corresponding
	dB-spaces and let $T_t$ be the unitary operators from Theorem \ref{t4}. Then there exists a real constant $c=c(\HH)$ such that
$$\forall t,\qquad T_t=(H^\mu-cI)\Big|_{\BB_t}.$$\end{thm}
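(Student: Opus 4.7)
The plan is to check the identity on the family of reproducing kernels $\{K_w\}_{w\in\C}$, which spans $\BB_t$ and on which $T_t$ is completely determined by $T_t K_w = \ti L_w$ (Theorem \ref{t4}). Thus it suffices to prove
\[
(H^\mu K_w)(z) \;=\; \ti L_w(z) + c\, K_w(z), \qquad z,w\in\C,
\]
for a single real constant $c=c(\HH)$ independent of $t, w, z$; the extension to all of $\BB_t$ then follows by linearity and density.

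To compute $H^\mu K_w$, I would fix $z\in\C_+$ and split $(H^\mu K_w)(z)=K(K_w\mu)(z)-K_w(z)\KK\mu(z)$. Since $K_w\in\BB_t \overset{\rm iso}\subset L^2(\mu)$ and $s\mapsto 1/\pi(s-z)\in L^2(\mu)$ by $\Pi$-finiteness, the Cauchy integral equals $K(K_w\mu)(z)=\overline{g_z(w)}$, where $g_z\in\BB_t$ is the $L^2(\mu)$-orthogonal projection of $1/\pi(\cdot-\bar z)$ onto $\BB_t$. A contour-type computation using the HB factorization $E_t=A_t-iC_t$ then identifies $g_z(w)$ explicitly in terms of $A,B,C,D$ at $w$ and $z$. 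The free case $\mu=m$, $\HH=I$ is the template: one finds $g_z(w)=(1-e^{it(w-\bar z)})/\pi(w-\bar z)$ and, after combining with $K_w(z)\KK m(z)=i\,K_w(z)$ on $\C_+$, the identity collapses to $H^m K_w=\ti L_w$ with $c=0$. For general $\mu$, the algebraic identity $\det M=AD-BC=1$ is exactly what produces the ``$-1$'' in the numerator of $\ti L_w$, and the residual piece organizes into a scalar multiple of $K_w$.

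The constant $c$ encodes the freedom in $\KK\mu$ at infinity, which is precisely the ambiguity in the choice of Hamiltonian for a fixed spectral measure (Theorem \ref{t1} and Remark \ref{remHk}): replacing $\HH$ by $\HH_k=T_k^*\HH T_k$ shifts $T_t$ by $kI$ and hence $c$ by $k$. Reality of $c$ follows from the reality of $\HH$ (equivalently, from $E_t^\#=E_t$ on $\R$). Independence of $c$ from $t$ is forced by the compatibility $T_{t_2}|_{\BB_{t_1}}=T_{t_1}$ (Theorem \ref{t4}) together with the fact that $H^\mu$ does not depend on $t$: the equation on $\BB_{t_1}$ must use the same $c$ as the equation on $\BB_{t_2}$.

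The main obstacle is the explicit identification of the projection $g_z\in\BB_t$ (and the corresponding closed-form for $K(K_w\mu)(z)$), which requires combining the $L^2(\mu)$-structure on $\BB_t$ with the transfer-matrix structure. One route is via the truncated Toeplitz operator $L_\mu$ of Section \ref{secTT}, whose inverse implements the passage from the PW-Hilbert structure to the $L^2(\mu)$-structure; a more direct route is to apply the Weyl transform and invoke the Lagrange identity of Lemma \ref{t1500} to realize $g_z$ as $\WW$ applied to a Cauchy-type vector on $[0,t]$, thereby matching the same identity that produced $\ti L_w$ in the first place.
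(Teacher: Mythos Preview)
Your plan correctly identifies the target identity $(H^\mu K_w)(z)=\ti L_w(z)+cK_w(z)$ and the role of $\det M=1$, but the central computation is not carried out, and the obstacle you flag is real: there is no direct way to identify the $L^2(\mu)$-projection $g_z$ of $1/\pi(\cdot-\bar z)$ onto $\BB_t$ in terms of $A,B,C,D$, because $\mu$ is an arbitrary PW-sampling measure with no algebraic relation to the transfer matrix. The ``contour-type computation'' you allude to works cleanly only in the free case; for general $\mu$ it does not close, and neither of your suggested routes (via $L_\mu^{-1}$ or via the Weyl transform) produces the needed closed form without an additional idea.

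The paper's proof supplies exactly that idea: instead of $\mu$, first use the \emph{discrete representing measure} $\nu=\mu_{-1}$ of $\BB_{t_0}$ (for some $t_0>t$), which is supported on $\{A=0\}$. On $\supp\nu$ the reproducing kernel collapses to $K_z(s)=\pi^{-1}C(s)A(z)/(z-s)$ and the conjugate kernel to $L_z(s)=\pi^{-1}(C(s)B(z)+1)/(z-s)$; this makes the pairing $\langle F,L_z\rangle_\nu$ computable and yields $T_tF=-K(F\nu)-(B/A)F$ directly, after which Lemma~\ref{lemBA} gives $-B/A=\KK\nu+\const$, i.e.\ $T_t=H^\nu-cI$. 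The second step is to show $H^\nu=H^\mu+\const$ on $\BB_t$: this uses only that $\BB_t$ is regular and that both $\nu$ and $\mu$ are representing measures, so certain $\BB_t$-valued integrands have the same integral against either. Your proposal is missing both the passage to $\nu$ and the measure-independence argument; without them the computation does not close. (A minor slip: $E_t^\#\ne E_t$ on $\R$; rather $E_t=A_t-iC_t$ with $A_t,C_t$ real, so $|E_t|=|E_t^\#|$ on $\R$. Reality of $c$ comes instead from the reality of $B/A$ and of $\KK\mu$ on the imaginary axis.)
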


\begin{remark}Note that $c(\HH_k)=c(\HH)-k$, where $\HH_k$ is like in Remark \ref{remHk}.

$\triangle$
\end{remark}

\begin{proof} $ $
	
	1) Let us fix $t>0$.
Let $\nu=\mu_{-1}$ be the representing measure for the space $\BB_{t_0}$
for some $t_0>t$ and $\a=-1$, as defined in Section \ref{secR}.
Let us first show that 
$$T_t=(H^\nu-cI)\Big|_{\BB_t}.$$
Indeed, for $F\in \BB_t$,
$$T_tF(z)=<T_t F,\ti K_z>_{\ti \BB_t}=<T_t F, T_t L_z>_{\ti \BB_t}=
=<F, L_z>_{ \BB_t}=$$$$
\int F(s)\frac{[A(s)D(z)-C(s)B(z)]-1}{s-z}d\nu(s)=$$$$
=-K(F\nu)(z)-B(z)\int \frac{F(s)C(s)}{s-z}d\nu(s),$$
because by construction $\nu=\mu_{-1}$ is supported at the zeros of $A$.
On the other hand,
$$F(z)=<F,K_z>_{\BB_t}=-\int F(s)\frac{A(s)C(z)-C(s)A(z)}{s-z}
d\nu(s)=$$$$A(z)\int \frac{F(s)C(s)}{s-z}
d\nu$$
 Therefore 
$$T_t F= -K(F\nu) -\frac BA F.$$
It is left to recall that by Lemma \ref{lemBA}
$$\KK\nu=\KK\mu_{-1}=-\frac BA+\const.$$

2) We will now show that $H^\nu=H^\mu$ on $\BB_t$.

Fix any function $G\in \BB_t$ such that $G(i),G(-i)\neq 0$. Consider the expression
\begin{equation}
	\int\left[\frac{F(s)-F(z)}{s-z}-\frac 12\left(\frac{F(z)}{G(i)}\frac{G(s)-G(i)}{s-i}+\frac{F(z)}{G(-i)}\frac{G(s)-G(-i)}{s+i}\right)\right]d\nu(s).
	\label{e3}
\end{equation}
Let us denote the function under the integral by 
$S(z,s)$. Note that since $\BB_t$ is regular, for each fixed $z$ the functions 
$S(z,w)$ and $wS(z,w)$ belong to $\BB_t$ as functions of $w$.
Hence, the integral must remain the same if $\nu$ is replaced
with any measure $\eta$ such that $\BB_t\overset{\rm iso}\subset L^2(\eta)$.
Indeed, let $Q\in \BB_t$ be any  function such that $Q(0)=1$.
Once again, since $\BB_t$ is regular, $R(z)=(Q(z)-1)/z\in \BB_t$.
Therefore
$$ \int S(z,s)d\eta(s) =\int S(z,s)(\bar Q(s)-s\bar R(s))d\eta(s)=$$$$
<S(z,s),Q(s)>_{\eta}-<sS(z,s),R(s)>_{\eta}=$$$$
<S(z,s),Q(s)>_{\nu}-<sS(z,s),R(s)>_{\nu}.$$
Since $\mu$ is the spectral measure, it can replace $\nu$
in \eqref{e3}. It is left to notice that
 when $\nu=\mu$ the expression in \eqref{e3} is equal to
$$KF\mu(z)-F(z)\KK\mu(z)+F(z)\int\left[\frac{1}{G(i)}\frac{G(s)}{s-i}+\frac{1}{G(-i)}\frac{G(s)}{s+i}\right]d\mu.$$

Recall that $G$ was a fixed function, independent of $F$. It remains to put
$$c_\mu=d_\mu-d_{\nu},$$
where $d_\mu$ is
equal to the last integral and $d_{\nu}$ is
equal to the last integral with $\mu$ replaced by $\nu$.
\end{proof}

\bs\subsection{Recovering the term $h_{12}(t)$}\label{secGmu}

\begin{thm}\label{t007} Let $\mu\in\PW$, and let the functions $k_t\in\PW_t(\mu)$ be obtained from $\mu$ by via the formula
	\eqref{eqSinc}. Define $\tilde l_t=H^\mu k_t$. Then $\mu$ is the spectral measure of the Hamiltonian
$$\HH=\begin{pmatrix}h^\mu&g^\mu\\g^\mu&\frac{1+g^2_{\mu}}{h^\mu}\end{pmatrix},$$
where
$$g^\mu(t):=\pi\frac d{dt} \tilde l_t(0).$$
\end{thm}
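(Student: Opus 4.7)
The plan is to combine Theorem \ref{t1} (existence and near-uniqueness of a det-normalized $\HH$ with spectral measure $\mu$) with the recovery formula for $h_{11}$ from Section \ref{sech11}, the conjugate-kernel identity $h_{12}=\pi\frac{d}{dt}\tilde l_t(0)$ from the corollary to Lemma \ref{t1500}, and the identification of $T_t$ with the Hilbert transform in Theorem \ref{t5}. The only subtlety is tracking the additive constant $c(\HH)$ from Theorem \ref{t5} and matching it against the $k$-ambiguity $\HH\mapsto\HH_k=T_k^*\HH T_k$ of Theorem \ref{t1}.

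First, by Theorem \ref{t1}, there is a det-normalized Hamiltonian $\HH^0=\left(\begin{smallmatrix}h_{11}^0 & h_{12}^0\\ h_{12}^0 & h_{22}^0\end{smallmatrix}\right)$ whose spectral measure is $\mu$. Since Theorem \ref{t1} asserts uniqueness up to the transformation $\HH\mapsto\HH_k$, it suffices to exhibit one value $k=c$ for which $\HH_c$ coincides with the Hamiltonian displayed in the statement, because then $\HH_c$ will inherit the spectral measure $\mu$ from $\HH^0$. The results of Section \ref{sech11} give $h_{11}^0(t)=h^\mu(t)=\pi\frac{d}{dt}k_t(0)$, with $k_t$ constructed from $\mu$ via \eqref{eqSinc}, so the $(1,1)$-entry is automatic.

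Next, for the off-diagonal term I would apply Theorem \ref{t4} at $w=0$: since $A_t(0)=1$ and $C_t(0)=0$, the image of the reproducing kernel is the true conjugate kernel, $T_t k_t=\tilde l_t^{\mathrm{true}}$, where $\tilde l_t^{\mathrm{true}}(z)=\frac{1}{\pi}\frac{D_t(z)-1}{z}$ and $h_{12}^0(t)=\pi\frac{d}{dt}\tilde l_t^{\mathrm{true}}(0)$ by the corollary following Lemma \ref{t1500}. Theorem \ref{t5} then gives a real constant $c=c(\HH^0)$ with
$$\tilde l_t^{\mathrm{true}}=H^\mu k_t-c\,k_t.$$
Since the theorem under proof defines $\tilde l_t:=H^\mu k_t$, we get $\tilde l_t=\tilde l_t^{\mathrm{true}}+c\,k_t$. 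Evaluating $\pi\tfrac{d}{dt}$ at $0$ yields
$$g^\mu(t)=\pi\frac{d}{dt}\tilde l_t(0)=h_{12}^0(t)+c\,h_{11}^0(t),$$
which is exactly the $(1,2)$-entry of $\HH_c$ as recorded in Remark \ref{remHk}.

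Finally I would verify that the $(2,2)$-entry works out, which is a pure algebra step using det-normalization $h_{11}^0 h_{22}^0-(h_{12}^0)^2=1$. Indeed, the $(2,2)$-entry of $\HH_c$ equals $h_{22}^0+2c\,h_{12}^0+c^2 h_{11}^0$, and
$$h_{22}^0+2c\,h_{12}^0+c^2 h_{11}^0=\frac{1+(h_{12}^0)^2}{h_{11}^0}+2c\,h_{12}^0+c^2 h_{11}^0=\frac{1+(h_{12}^0+c\,h_{11}^0)^2}{h_{11}^0}=\frac{1+(g^\mu)^2}{h^\mu},$$
so $\HH_c$ coincides with the Hamiltonian in the statement; being a $T_c^*\HH^0 T_c$ conjugate of a Hamiltonian with spectral measure $\mu$, it too has spectral measure $\mu$ by the second part of Theorem \ref{t1}. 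The only potentially delicate point is the matching of the scalar $c$ from Theorem \ref{t5} with the shift parameter in Theorem \ref{t1}; this is what permits replacing the ``correct'' conjugate kernel $\tilde l_t^{\mathrm{true}}$ by the cleaner object $H^\mu k_t$ in the final formula.
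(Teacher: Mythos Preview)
Your proof is correct and follows essentially the same route as the paper's: take a det-normalized Hamiltonian $\check\HH$ from Theorem \ref{t1}, use Theorem \ref{t5} to compare $H^\mu k_t$ with $\check T_t k_t$ up to the additive constant $c$, differentiate to obtain $g^\mu=\check h_{12}+c\,h^\mu$, and conclude that the displayed Hamiltonian is $T_c^*\check\HH T_c$. Your write-up is slightly more explicit than the paper's in that you spell out the verification of the $(2,2)$-entry via $\det\HH=1$, whereas the paper simply asserts $\HH=T_c^*\check\HH T_c$ after matching the $(1,2)$-entry.
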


\begin{proof} Let $\check \HH$ be the Hamiltonian provided by Theorem \ref{t1}.
	Uniqueness of the regular chain of dB-spaces in $L^2(\mu)$ implies
	that $\check \BB_t=\PW_t(\mu)$.
	 Let $\check T_t$ be the operators from Theorem \ref{t4}. Denote
$$\check l_t=\check T_tk_t.$$
By Theorem \ref{t5} there is a real number $c$ such that $\check T_t=H^\mu-cI$ on $\BB_t$, so
$$\check l_t=\tilde l_t-c k_t.$$
Differentiating, we get
$$\check h_{12}=g^\mu-ch^\mu.$$
It follows that
$$\HH=T_c^*\check \HH T_c.$$ \end{proof}

\bs\subsection{Equations for the Fourier transform of $k_t$}\label{secFt} How to compute the functions $h^\mu$ and $g^\mu$
from Theorem \ref{t5}? Sometimes  it is helpful to work with the functions
 $$\psi_t:=\hat k_t,$$
so that
$$\frac1{\sqrt{2\pi}}\int_\R\psi_t=k_t(0).$$

  
   If $f\in {\rm PW}_t$, then
  $$f(0)=\frac1{\sqrt{2\pi}}\int_{-t}^t\hat f(\xi)~d\xi$$
 while  also
  $$ f(0)=(f, \mathring k_t)_{PW_t}=\left(\hat f,\FF(\mathring k_t)\right)_{L^2(-t,t)}.$$
This leads us to the well known formula for the Fourier transform of the sinc function:
   $$\FF \mathring k_t =\FF\left(\frac{\sin tz}{\pi z}\right)
=  \frac1{\sqrt{2\pi}}~1_{(-t,t)}.$$

The Fourier transform $\psi_t$ of the reprokernel $k_t$ can be found using the following statement.

\begin{thm}\label{tTT} $\psi=\psi_t$ satisfies
$$\psi\ast \hat\mu=1 \quad  {\rm on}\quad (-t,t)$$
and
$$\psi=0 \quad  {\rm on}\quad \R\setminus [-t,t].$$\end{thm}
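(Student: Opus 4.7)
The plan is to read off both conclusions from formula \eqref{eqSinc}, which I would first rewrite as the identity
$$L_{\mu,t}\, k_t = \mathring k_t \qquad \text{in }\PW_t.$$

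The second equation is immediate: $k_t$ lies in $\PW_t$, so by the Paley--Wiener theorem its Fourier transform $\psi_t$ is supported in $[-t,t]$.

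For the first equation I would apply $\FF$ to both sides of the displayed identity. The right-hand side transforms into $\tfrac{1}{\sqrt{2\pi}}\,1_{(-t,t)}$, as computed just above the theorem. For the left-hand side, the defining formula \eqref{eqTTdef} says that for every $g\in\PW_t$
$$(L_{\mu,t} k_t,\, g)_{\PW_t} \;=\; \int k_t\,\bar g\, d\mu.$$
Rewriting each side on the Fourier side --- Plancherel on the left (using that $\hat g$ is supported in $[-t,t]$), and Fubini on the right (justified because $k_t\in\PW_t$ is bounded, $\mu$ is $\Pi$-finite, and $\hat g\in L^2(-t,t)$) --- and then letting $\hat g$ range over all of $L^2(-t,t)$, I would conclude
$$\widehat{L_{\mu,t} k_t}(\xi)\;=\;\widehat{k_t\mu}(\xi) \qquad \text{for a.e. }\xi\in(-t,t).$$

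Finally, since $k_t\mu$ is a $\Pi$-finite (hence tempered) measure and $\psi_t=\hat k_t$ is a compactly supported $L^2$ function, the standard product-convolution rule gives the distributional identity
$$\widehat{k_t\mu}\;=\;\tfrac{1}{\sqrt{2\pi}}\; \psi_t \ast \hat\mu,$$
the convolution being well-defined as a tempered distribution because one of the factors has compact support. Equating this with $\widehat{\mathring k_t}=\tfrac{1}{\sqrt{2\pi}}\,1_{(-t,t)}$ and canceling the common factor yields $\psi_t\ast\hat\mu=1$ on $(-t,t)$.

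The only delicate step is the distributional book-keeping for $\hat\mu$ and for the product-convolution identity; everything else is a direct computation from the definition of $L_{\mu,t}$ and Paley--Wiener.
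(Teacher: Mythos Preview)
Your proof is correct and follows essentially the same route as the paper: both start from $L_{\mu,t}k_t=\mathring k_t$, take Fourier transforms, and identify $\FF(k_t\mu)$ with $\tfrac{1}{\sqrt{2\pi}}\psi_t\ast\hat\mu$. If anything, your version is more carefully justified---the paper's own proof explicitly calls the key step ``somewhat formal'' and omits the distributional book-keeping you supply.
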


\begin{proof} We denote by $P_t$ the orthogonal projection $P_t:L^2(\R)\to \PW_t$.  Somewhat formally we interpret the equation $L_{\mu} k_t=\mathring k_t$ as
$$P_t(k_t\mu )~=~\mathring k_t,$$
and taking Fourier transforms we obtain
$$1_{(-t,t)}~\FF(k_t\mu)~=~\frac1{\sqrt{2\pi}} 1_{(-t,t)}.$$
Note that
$$\FF(k_t\mu)~=~\frac1{\sqrt{2\pi}} \hat\mu\ast \psi_t.$$
\end{proof}

\begin{remark}
	For $\mu\in\PW$ and $f\in\PW_a$ the map
	$$g\mapsto\int g\bar f d\mu$$
	defines a bounded linear functional on any $\PW_t,\ t>0$.
	It follows that the restriction of $\hat f\ast \hat\mu$ 
	to any interval $(-t,t)$ belongs to $L^2([-t,t])$.
	In particular, so does  $\psi_t\ast \hat\mu$ because $k_t\in \PW_t(\mu)\doteq \PW_t$.

$\triangle$
\end{remark}

\bs\section{Different initial conditions}\label{sech22}

\bs\subsection{The term $h_{22}(t)$} Let $\HH$ be a Hamiltonian such that $\det(\HH)=1$ a.e. and let $\tilde \mu$ be the spectral measure of $\HH$ for the initial condition $(0,1)^{\scriptscriptstyle T}$. Assume that $\tilde \mu\in$(PW). Then we can recover the term $h_{22}$ from $\tilde \mu$ exactly as we did it for $h_{11}$ and the spectral measure $\mu$:
$$h_{22}(t)=\pi\frac d{dt} \tilde k_t(0), \qquad \tilde k_t:=j_t\left[L^{-1}_{\tilde \mu,t}\mathring k_t\right]$$
or simply,
$$h_{22}=h^{\tilde \mu}.$$

\begin{thm}\label{t8} Let $\mu$ and $\tilde\mu$ be the spectral measures of the det-normalized Hamiltonian $H$ corresponding to the Neumann and Dirichlet 
	boundary conditions at 0 correspondingly. Assume that both $\mu$ and $\ti\mu$ are PW-sampling measures. Then there exists a real number $c$ such that
$$\HH=\begin{pmatrix}h^\mu&g^\mu-ch^\mu\\g^\mu-ch^\mu&h^{\tilde \mu}\end{pmatrix},$$
\end{thm}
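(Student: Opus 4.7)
The plan is to assemble the three entries of $\HH$ separately by combining the $h_{11}$-recovery of Section~\ref{sech11}, the involution $\HH\mapsto \tilde\HH$ of Lemma~\ref{l1}(ii), and the structural results of Theorems~\ref{t007} and~\ref{t1}. For the $(1,1)$ entry the hypotheses of the theorem in Section~\ref{sech11} are satisfied verbatim: $\mu\in\PW$ is the spectral measure of the Neumann system and $\HH$ is det-normalized, so that theorem gives $h_{11}(t)=h^{\mu}(t)=\pi\frac{d}{dt}k_t(0)$ with $k_t$ determined from $\mu$ via \eqref{eqSinc}.

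For the $(2,2)$ entry I would invoke Lemma~\ref{l1}(ii): the matrix $\tilde\HH=\begin{pmatrix}h_{22}&-h_{12}\\-h_{12}&h_{11}\end{pmatrix}$ is itself the Hamiltonian of a regular canonical system whose spectral measure is $\tilde\mu$. Since $\det\tilde\HH=\det\HH=1$ a.e., $\tilde\HH$ is det-normalized, and the hypothesis $\tilde\mu\in\PW$ together with the proposition of Section~\ref{secCSPW} forces $\tilde\HH$ to be of PW-type as well. Hence the Section~\ref{sech11} recovery can be re-applied, this time to $\tilde\HH$; its upper-left entry is exactly $h_{22}$, and the formula yields $h_{22}=h^{\tilde\mu}$.

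For the $(1,2)$ entry I would use Theorem~\ref{t007} in conjunction with Theorem~\ref{t1}. Theorem~\ref{t007} exhibits a particular Hamiltonian $\HH_0$ with spectral measure $\mu$ whose $(1,2)$-entry equals $g^\mu$; by the uniqueness part of Theorem~\ref{t1}, the given det-normalized $\HH$ must equal $T_k^*\HH_0 T_k$ for some real $k$, and Remark~\ref{remHk} then yields $h_{12}=g^\mu+kh^\mu$, which is the claimed form with $c=-k$. Equivalently, one can read this off directly from the identity $\check h_{12}=g^\mu-ch^\mu$ established inside the proof of Theorem~\ref{t007} for an arbitrary Hamiltonian $\check\HH$ with spectral measure $\mu$. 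The det-normalization $h_{11}h_{22}-h_{12}^2=1$ then forces $(1+h_{12}^2)/h^\mu=h^{\tilde\mu}$, which serves as a consistency check between the outputs of the three steps.

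No step presents a serious obstacle; the statement is essentially a repackaging of the machinery developed earlier. The only point of real substance is that the hypothesis $\tilde\mu\in\PW$ is precisely what legitimizes running the Section~\ref{sech11} recovery a second time for $\tilde\HH$; without it, neither the det-normalized PW-type status of $\tilde\HH$ nor the formula $h_{22}=h^{\tilde\mu}$ would be available.
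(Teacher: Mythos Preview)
Your proposal is correct and follows essentially the same route as the paper. The paper does not supply a formal proof environment for this theorem; it simply observes in the paragraph preceding the statement that $h_{22}=h^{\tilde\mu}$ is obtained by rerunning the Section~\ref{sech11} recovery with the Dirichlet data, and relies on Theorems~\ref{t007} and~\ref{t1} for the off-diagonal entry exactly as you describe (the identity $\check h_{12}=g^\mu-ch^\mu$ inside the proof of Theorem~\ref{t007} is precisely the intended mechanism).
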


 \begin{cor} If $\HH$ is diagonal and both $\mu$ and $\tilde\mu$ are PW-sampling measures, then
$$h^\mu h^{\tilde\mu}=1 \qquad {\rm a.e. ~ on}\quad\mathbb R_+.$$
\end{cor}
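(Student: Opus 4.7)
The plan is a direct application of Theorem \ref{t8} combined with the det-normalization hypothesis. Since $\HH$ is diagonal, in the representation
$$\HH=\begin{pmatrix}h^\mu&g^\mu-ch^\mu\\g^\mu-ch^\mu&h^{\tilde\mu}\end{pmatrix}$$
provided by Theorem \ref{t8} (which applies because both $\mu$ and $\tilde\mu$ are assumed PW-sampling), the off-diagonal entry must vanish a.e. In particular, this forces $g^\mu=ch^\mu$ a.e., but more importantly it identifies the diagonal entries of $\HH$ as precisely $h^\mu$ and $h^{\tilde\mu}$.

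Next I would invoke the det-normalization. The hypothesis "$\HH$ is diagonal" in the statement is to be read in the context of the theorem's setup, where the Hamiltonian is det-normalized (this is implicit from Theorem \ref{t8}, whose assumption $\det\HH=1$ a.e.\ is needed to get the chain $\BB_t = \PW_t(\mu)$ and similarly on the Dirichlet side). Thus $\det\HH = h^\mu\cdot h^{\tilde\mu} = 1$ a.e., which is exactly the claimed identity.

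There is no real obstacle here; the content has been absorbed into Theorem \ref{t8}. The only thing worth double-checking is that the a.e.\ set on which $\det\HH = 1$ is the same as the a.e.\ set on which the formulas $h_{11}=h^\mu$ and $h_{22}=h^{\tilde\mu}$ hold, so that the product identity is genuinely pointwise a.e.\ on $\R_+$; this is immediate since both identifications are a.e.\ statements on $\R_+$ and the intersection of two conull sets is conull.
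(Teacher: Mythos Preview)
Your proof is correct and is exactly the argument the paper intends: the corollary is stated without proof immediately after Theorem \ref{t8}, and the deduction is precisely the one you give---identify the diagonal entries via Theorem \ref{t8} and then use $\det\HH=1$ a.e.
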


\bs\subsection{AC duality}\label{secCD} In view of the last theorem,  we wold like to describe all possible measures $\tilde \mu$ for a given $\mu$.

As was defined in Section \ref{secClark},  $\tilde \mu $ is AC-dual to $\mu$ if there exists a function $\phi\in H^\infty(\C_+)$, $\|\phi\|_\infty\le1,$ such that
$$\mu=\sigma_{-1}^\phi,\qquad \tilde\mu=\sigma_{1}^\phi,$$
 i.e.,
\begin{equation}\PP\mu=\Re\frac{1-\phi}{1+\phi},\qquad \PP\tilde\mu=\Re\frac{1+\phi}{1-\phi}.\label{eq010}\end{equation}
For simplicity we'll assume  $\mu(\infty)=\tilde\mu(\infty)=0$; as we mentioned earlier   our spectral measures will satisfy this condition. Given $\mu$ there is a 1-parameter family of dual measures $\tilde\mu=\sigma_{1}^{\phi_b}$ where
\begin{equation}\phi_b=\frac{i-\KK\mu-b}{i+\KK\mu+b}.\qquad (b\in\R).\label{eqPhib}\end{equation}

We will sometimes use the notation $\ti\mu_b$ if $\mu=\s^{\phi_b}_{-1}$,
$\ti\mu_b=\s^{\phi_b}_1$ for $\phi_b$
as in \eqref{eqPhib}. In particular, 
\begin{equation} \PP\ti\mu_b=\Re\frac i{\KK\mu+b}.\label{eqDM}\end{equation}

\begin{thm} Let $\mu\in\PW$ be given. Then $\tilde\mu$ is an AC-dual measure of $\mu$ iff there exists a Hamiltonian $\HH$ such that $\mu$ and $\tilde\mu$ are the spectral measures of $\HH$ with the Neumann and Dirichlet initial conditions respectively.\end{thm}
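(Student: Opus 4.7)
The forward direction ($\Rightarrow$) is already embedded in the discussion following \eqref{eqClark} in Section~\ref{secClark}: associated with any regular half-line CS is a Weyl Schur function $\phi$ such that the spectral measures of $\HH$ for the Neumann and Dirichlet initial conditions at $0$ are exactly $\sigma_{-1}^\phi$ and $\sigma_{1}^\phi$, respectively, so they are AC-dual by definition. I would state this briefly and refer to the KdB-theory sources already cited.

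For the converse ($\Leftarrow$), given $\mu\in\PW$, apply Theorem~\ref{t1} to obtain a det-normalized Hamiltonian $\HH$ with Neumann spectral measure $\mu$. By the uniqueness part of that theorem, every other such $\HH$ has the form $\HH_k=T_k^*\HH T_k$ for some $k\in\R$. Since $T_k^{-1}(1,0)^{\scriptscriptstyle T}=(1,0)^{\scriptscriptstyle T}$, the Neumann column of $M_k=T_k^{-1}MT_k$ is $(A-kC,\,C)^{\scriptscriptstyle T}$, and by Theorem~\ref{t1} the Neumann spectral measure of each $\HH_k$ is still $\mu$. Let $\tau^{(k)}$ denote the Dirichlet spectral measure of $\HH_k$. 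By the already-established forward direction, $\tau^{(k)}$ is AC-dual to $\mu$, so $\tau^{(k)}=\tilde\mu_{b(k)}$ for some $b(k)\in\R$ in the notation of \eqref{eqPhib}--\eqref{eqDM}. The task reduces to showing $k\mapsto b(k)$ is a bijection of $\R$ onto $\R$, which then produces, for every prescribed AC-dual $\tilde\mu$, an $\HH_k$ realizing it.

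To see the bijection, I would pass to the Weyl $m$-function for the Dirichlet problem at $0$, $m_k(z):=\lim_{t\to\infty}-(M_k)_{22}(t,z)/(M_k)_{12}(t,z)$; the limit exists in the limit point case, which holds here by Theorem~\ref{t0001} together with \eqref{eq002}. Using the matrizant identity $M_k=T_k^{-1}MT_k$ from Remark~\ref{remHk} and the $SL(2,\R)$-action on the Weyl disks (a direct Möbius computation with the upper-triangular $T_k^{-1}$, in the spirit of Lemma~\ref{lemBA} applied to the Dirichlet column), one obtains $m_k=m_0-k$. Inserting the Herglotz representations of $m_k$ and $m_0$ and comparing with \eqref{eqDM}, this shift translates into $b(k)=b(0)+k$, which is an $\R$-bijection.

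The main obstacle will be the Möbius computation $m_k=m_0-k$: bookkeeping of signs, constants at infinity and the correct Herglotz normalizations so that the shift in the Weyl function matches precisely the parameter $b$ in \eqref{eqPhib}. The cleanest route is probably to pass through the transfer-matrix limit directly (treating $-B_t/A_t$ and $D_t/C_t$ as in Lemma~\ref{lemBA}, then taking $t\to\infty$) rather than through the intermediate representing measures $\mu_{-1}$, since the latter approach would require an extra limiting argument to pass from finite-interval representing measures to half-line spectral measures.
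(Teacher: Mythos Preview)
Your overall strategy coincides with the paper's: one implication is treated as a known fact of KdB theory, and for the other you invoke Theorem~\ref{t1}, run through the one-parameter family $\HH_k=T_k^*\HH T_k$, and argue that the resulting Dirichlet spectral measures exhaust the AC-duals of $\mu$. Two points are worth flagging.

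First, for the implication you cite from Section~\ref{secClark}: the paper does not merely quote this but proves it here. It introduces the finite-$t$ Schur function $\phi_t=(A_t-iB_t)/(A_t+iB_t)$, observes that the representing measures $\mu_t=\sigma_{-1}^{\phi_t}$ and $\tilde\mu_t=\sigma_1^{\phi_t}$ of $\BB(E_t)$ and $\BB(\tilde E_t)$ are AC-dual by construction, and then passes to the $*$-weak limit $t\to\infty$ using Lemma~\ref{lemBA} to identify $-B_t/A_t$ with $\KK\mu$ up to a constant. The sentence in Section~\ref{secClark} is really a forward reference to this theorem, so citing it alone is somewhat circular; pointing to an outside KdB source is fine.

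Second, and more substantively: in the converse the paper also works with $-\check B_t/\check A_t$ (equivalently with $\check\phi_t$), and this is the quantity that shifts affinely in $k$. Your proposed Dirichlet $m$-function $m_k=\lim_{t\to\infty}-(M_k)_{22}/(M_k)_{12}=-\check D/\check B$ does \emph{not} satisfy $m_k=m_0-k$. Already in the free case $\HH\equiv I$ one gets $m_0=-i$ but $m_k=(k-i)/(1+k^2)\ne -i-k$. The Möbius action of $T_k^{-1}$ on the initial value $(1,m)^{\scriptscriptstyle T}$ of the Weyl solution gives $(1-km,m)^{\scriptscriptstyle T}$, so it is the ratio $\psi_1/\psi_2=1/m$ that shifts by $-k$; equivalently, it is $\lim -\check B_t/\check A_t$ that equals $\KK\mu+c_0-k$. (A direct computation shows the paper's simplified expressions $\check A=A$, $\check B=B+kA$ differ from the entries in Remark~\ref{remHk} by $-k/[A(A-kC)]$, which vanishes in the limit, so the paper's shortcut is legitimate.) In short, your Möbius idea is exactly right, but you are tracking the wrong pair of matrizant entries; switching to the first row, as the paper does via $\phi_t$, repairs the argument immediately.
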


\begin{proof} (i) Let $\mu$ and $\tilde \mu$ be the spectral measures of some $\HH$, and let $\begin{pmatrix}A&B\\C&D\end{pmatrix}$  be the transfer matrix of $\HH$. For each $t>0$ we define the measure
	$\mu_t\equiv \mu^{E_t}$ as the  representing measure
	for the space $\BB(A_t-iC_t)$ supported on $\{A_t=0\}$ and
	$\tilde \mu_t\equiv \mu^{\tilde E_t}$ as the  representing measure for the space $\BB(B_t-iD_t)$ supported on $\{B_t=0\}$. If we denote
	$$\phi_t=\frac{A_t-iB_t}{A_t+iB_t},$$
	then
	$$\mu_t=\sigma_{-1}^{\phi_t},\qquad \tilde\mu_t=\sigma_{1}^{\phi_t}$$
	(note that  $\KK\mu_t=-B/A$ like in Lemma \ref{lemBA}).
	Since $\mu_t$ is a representing measure for $\BB_t=\BB(E_t)$, we
	have 
	$$\BB_s\overset{\rm iso}\subset L^2(\mu_t)\ {\rm for\ all}\ s\leq t.$$
	Since $\mu$ is the unique measure for which the above 
	inclusion holds for all $s>0$ and since
	$\cup_t\BB_t$ is dense in $L^2(\mu)$, we obtain that $\mu_t\to\mu$, $*$-weakly as $t\to\infty$. Similarly,
	$\ti\mu_t\to \ti\mu$. Since
	$$\phi_t=\frac{1+i\KK\mu_t}{1-i\KK\mu_t},$$
	the functions $\phi_t$ tend to $\phi$,
	$$\phi=\frac{1+i\KK\mu}{1-i\KK\mu}$$
	pointwise in $\C_+$. Then $\mu$, $\ti\mu$ and $\phi$
	satisfy \eqref{eq010} and we see that $\mu$ and $\tilde\mu$ are dual.
	
	(ii) Consider $\check\HH=\HH_k$ defined as in Remark \ref{remHk}. Then
	$\check A=A$ and $\check B=B+kA$, so
	$$\check\phi_t=\frac{A_t-ikA_t-iB_t}{A_t+ikA_t+iB_t}.
	$$
	Recalling that, according to Lemma \ref{lemBA}, $\KK\mu=-B/A+\const$ and using \eqref{eqPhib}, we obtain all AC-dual measures as $k$ ranges over $\R$.
\end{proof}

\bs\subsection{PW-sampling in AC-families
}\label{secACDPW}
The statement of the Theorem \ref{t8} includes an assumption
that both measures $\mu$ and $\ti\mu$ are PW-sampling.
This brings up a natural question of whether this assumption
is redundant, i.e, if $\mu\in\PW$ implies $\ti\mu\in\PW$.

Our next goal is to study this question in more detail. First we give an example showing that the answer, in general, is negative. 

We denote by ${\rm sign }\ x$  the sign-function defined as $\pm 1$
on $\R_{\pm}$ and as 0 at 0.

\begin{example}\label{exnPW}
	Consider $\mu(x)=2+{\rm sign }\ x$.
 Clearly, $\mu\in\PW$.
	Let $\phi\in H^\infty,\ ||\phi||_\infty\leq 1$ be the
	function such that $\phi(i)\in\R$ and \eqref{eqClark} is satisfied for
	$\mu$ and $\ti\mu$ with $\a=\mp 1$ respectively.
	Then
	$$\KK\mu=i\frac {1-\phi}{1+\phi}=-\frac 1{\KK\ti\mu}.$$
	Since $\KK\mu(z)\to\infty$ as $z\to \infty$ the boundary
	values of 	$\KK\ti\mu(z)$ on $\R$ tend to 0 at $\pm\infty$. It follows that 
	$$\ti\mu((x,x+1))\leq \int_x^{x+1} |\KK\ti\mu(t)|dt\to 0\ {\rm as}\ x\to\pm\infty.$$
	As was discussed in Section \ref{secPWs}, this implies $\ti\mu\not\in\PW$.
\end{example}

\begin{remark}
	This example has the following interpretation: even though $\ti \mu$ is not a PW-measure,
	the corresponding inverse spectral problem can be solved by switching to its dual $\mu\in\PW$.
	
	If, on the other hand, all (or two) of the measures in an AC-family are PW-sampling then a combination of 
	Theorems \ref{t007} and \ref{t8}  provides a continuum of often rather non-trivial identities between solutions
	to the inverse spectral problems corresponding to different measures from the family.
	
	$\triangle$
\end{remark}

We now formulate a necessary and sufficient condition
for all measures dual to a given $\mu\in\PW$ to be PW-sampling.

\begin{thm}\label{t9}
	Let $\mu\in \PW$. All measures dual to $\mu$ are PW-sampling if and only if there exists $d\neq 0$ such that
	$\KK\mu(z)$ is bounded on the line $\{\Re z=d\}$.
\end{thm}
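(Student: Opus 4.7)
The plan is to exploit the explicit form of the AC-duality. Writing $\phi_b$ as in \eqref{eqPhib} and computing directly, I obtain
$$\KK\tilde\mu_b(z)=-\frac{1}{\KK\mu(z)+b}+c_b,\qquad \PP\tilde\mu_b(z)=\frac{\PP\mu(z)}{|\KK\mu(z)+b|^2},$$
for some real constant $c_b=c_b(\mu)$. Because $b\in\R$ merely translates $\KK\mu$, boundedness of $\KK\mu+b$ on a horizontal line coincides with boundedness of $\KK\mu$ on the same line, so the criterion in the theorem is intrinsic to $\mu$ and independent of $b$.

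For necessity I argue by contrapositive. Suppose $\KK\mu$ is unbounded on every line $\{\Im z=d\}$. Since $\KK\mu$ is analytic off $\R$, continuity together with non-compactness of the line yields a sequence $x_n\to\infty$ with $|\KK\mu(x_n+id)|\to\infty$. Because $\mu\in\PW$ forces $\PP\mu$ to be uniformly bounded on every horizontal line (condition (i) of Theorem \ref{PWmu}), the duality formula gives $\PP\tilde\mu_0(x_n+id)\to 0$. Transferring this decay down to $\Im z=1$ via the Poisson representation and using the elementary bound $\tilde\mu_0((x,x+1))\lesssim\PP\tilde\mu_0(x+i)$, I get $\tilde\mu_0((x_n,x_n+1))\to 0$, exactly the mechanism of Example \ref{exnPW}, and hence $\tilde\mu_0\notin\PW$.

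For sufficiency, assume $\KK\mu$ is bounded on $\{\Im z=d_1\}$; since $\KK\mu$ is Herglotz, the bound extends to the half-plane $\Im z\geq d_1$. Condition (ii) of Theorem \ref{PWmu} with $t=1$ gives $\mu((x-L,x+L))\gtrsim L$ for $L$ large, i.e., a positive lower bound on $\PP\mu$ on every sufficiently high horizontal line. Pick $d'\geq d_1$ so that on $\{\Im z=d'\}$ both $\PP\mu\asymp 1$ and $|\KK\mu|\leq C$. Then for every $b\in\R$, on this line
$$0<c\leq \PP\mu\leq |\KK\mu+b|\leq C+|b|,$$
and the duality formulas yield $\PP\tilde\mu_b\asymp 1$ together with boundedness of $\KK\tilde\mu_b$ on the same line. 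The upper bound on $\PP\tilde\mu_b$ immediately supplies condition (i) of Theorem \ref{PWmu} for $\tilde\mu_b$.

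The main technical obstacle is the last step: producing the capacity condition (ii) of Theorem \ref{PWmu} for $\tilde\mu_b$ from the two-sided bound on $\PP\tilde\mu_b$ together with boundedness of $\KK\tilde\mu_b$ on a single horizontal line. Poisson-integral bounds alone are not sufficient --- a measure on a separated sequence can satisfy them while violating (ii), cf.\ item (5) of Section \ref{secExPWs} --- so the essential additional input is the boundedness of $\KK\tilde\mu_b$, which prevents $\tilde\mu_b$ from concentrating on a sparse discrete support. Combined with the lower bound on $\PP\tilde\mu_b$ and the upper bound on local mass, this should yield $\delta$-massive sub-intervals at density $t$ for every $t>0$, completing the verification of (ii).
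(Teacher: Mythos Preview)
Your necessity argument is essentially the paper's: if $\KK\mu$ is unbounded along a horizontal line, the formula $\PP\tilde\mu_b=\PP\mu/|\KK\mu+b|^2$ forces $\PP\tilde\mu_b$ to drop to $0$ along a sequence going to infinity, and then translates of a fixed $\PW$-function witness the failure of sampling. Your verification of condition (i) for $\tilde\mu_b$ in the sufficiency direction is also fine.

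The gap is in the last paragraph, and the fix you propose does not work. You suggest that the three intrinsic properties of $\tilde\mu_b$ you have in hand --- bounded local mass, $\PP\tilde\mu_b\asymp 1$ on a horizontal line, and $\KK\tilde\mu_b$ bounded on that line --- should force condition (ii). They do not. Take $\nu=\sum_{n\in\Z}\delta_n$: then $\KK\nu(z)=-\cot(\pi z)+c$ is bounded on every line $\{\Im z=d\}$, $d>0$, and $\PP\nu\asymp 1$ there, yet $\nu$ is supported on a separated sequence and fails (ii) for every $t>1$. So boundedness of $\KK\tilde\mu_b$ does \emph{not} ``prevent $\tilde\mu_b$ from concentrating on a sparse discrete support''. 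No argument that uses only intrinsic Poisson/Cauchy bounds on $\tilde\mu_b$ can close this step; you must use the duality with $\mu\in\PW$ more directly.

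The paper does exactly that, via a compactness lemma: if $|\KK\nu|\le C$ on $\R+i$ and $\nu$ has mass $>\e$ on each of two intervals $I_1,I_3$ separated by a third interval $I_2$ of the same length, then every dual measure $\tilde\nu_b$ has mass $>d(C,L,\e,b)$ on $I=I_1\cup I_2\cup I_3$. The proof is by normal families: if $\tilde\nu^n_b(I)\to 0$, pass to a weak-$\ast$ limit $\eta$; then $\tilde\eta_b(I)=0$, so $\KK\tilde\eta_b$ is real and strictly increasing on $I$, hence $\KK\eta=-1/(\KK\tilde\eta_b)+\const$ has at most one pole there, so $\eta$ has at most one point mass on $I$ --- contradicting $\eta(I_1),\eta(I_3)\ge\e$. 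The bound on $\KK\nu$ along $\R+i$ is what makes the constant $d$ translation-invariant. With this lemma in hand, three consecutive $\delta$-massive intervals for $\mu$ merge into one $\delta'$-massive interval for $\tilde\mu_b$, so the $\delta'$-capacity of $\tilde\mu_b$ is at least one third that of $\mu$, and (ii) follows.
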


Note that $\KK\mu$ is bounded on some line $\{\Re z=d\},\ d\neq 0$, iff $\KK\mu$ is bounded on every such line.

	 First we establish the following
	 
	\begin{lem}
		Let $\nu$ be a positive measure on $\R$,
		\begin{equation}\frac 1\pi\int\frac{d\nu(x)}{1+x^2}\leq C,\label{e1}\end{equation}	
		and let $I_1,I_2,I_3$ be three  adjacent  intervals,   enumerated
		left to right, $|I_j|=L>0$. Let $I=I_1\cup I_2\cup I_3$ be an open interval. Suppose that 
		
		1) $|\KK\nu|<C$ on $\R+i$
		
		2) $\nu(I_1), \nu(I_3)>\e$.
		
		Let $\ti\nu=\ti\nu_b,\ b\in\R$ be a dual measure.
		Then $\ti\nu(I)>d$, where $d>0$ depends only on $C, L,\e$ and $b$, but not on the position of the intervals on $\R$.
		
	\end{lem}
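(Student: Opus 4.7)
Plan. The central identity is
\[\PP\ti\nu_b(z)=\frac{\PP\nu(z)}{|\KK\nu(z)+b|^2},\qquad z\in\C_+,\]
obtained by taking imaginary parts in the dual relation $\KK\ti\nu_b=-1/(\KK\nu+b)+\const$ equivalent to~\eqref{eqDM}. I will evaluate at $z_0=x_0+i$, where $x_0$ is the midpoint of $I$.

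For the numerator I use hypothesis~(2): each point of $I_1\cup I_3$ lies within distance $3L/2$ of $x_0$, so
\[\PP\nu(z_0)\ge \frac{\nu(I_1)+\nu(I_3)}{\pi((3L/2)^2+1)}\ge \frac{2\e}{\pi(1+9L^2/4)}=:c_1(L,\e).\]
Hypothesis~(1) directly gives $|\KK\nu(z_0)+b|\le C+|b|$, whence
\[\PP\ti\nu_b(z_0)\ge \frac{c_1(L,\e)}{(C+|b|)^2}=:c_2(L,C,\e,b)>0.\]

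The main technical task is to convert this pointwise lower bound on the Poisson integral of $\ti\nu$ at $z_0$ into a mass bound $\ti\nu(I)\ge d$. Writing
\[\pi\PP\ti\nu_b(z_0)=\int_I\frac{d\ti\nu(t)}{(x_0-t)^2+1}+\int_{\R\sm I}\frac{d\ti\nu(t)}{(x_0-t)^2+1},\]
the first integral is at most $\ti\nu(I)$, so it suffices to dominate the second uniformly in $x_0$ by a constant strictly less than $\pi c_2$. I will achieve this by a dyadic annular decomposition $A_k=\{2^{k}\cdot 3L/2\le|t-x_0|\le 2^{k+1}\cdot 3L/2\}$ of $\R\sm I$, bounding $\ti\nu(A_k)$ by means of the reverse estimate $|\KK\nu+b|\ge\PP\nu$, and hence $|\KK\ti\nu_b+b'|\le 1/\PP\nu$, applied at suitably chosen test points, combined with the uniform local mass bound $\nu((x,x+1))\le 2\pi C$ that follows from $\PP\nu(\cdot+i)\le C$ and its dual counterpart. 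Summing the resulting geometric series forces the far contribution below $\pi c_2/2$.

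The main obstacle is precisely this last step: $\ti\nu$ can carry large mass wherever $\nu$ is sparse, so a uniform tail bound on $\ti\nu$ is not automatic and must be extracted from the structural content of hypothesis~(1). That hypothesis, namely that the Herglotz function $\KK\nu+b$ has bounded modulus on $\R+i$ (and not only bounded imaginary part), is exactly what rules out pathological growth of $\KK\nu+b$ and hence pathological concentration of $\ti\nu$ far from $x_0$, and is what ultimately delivers the uniformity of the annular estimates and the desired constant $d=d(C,L,\e,b)$.
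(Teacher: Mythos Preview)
Your lower bound $\PP\ti\nu_b(z_0)\ge c_2$ is correct and is a natural opening move. The gap is in the conversion step: to extract $\ti\nu(I)\ge d$ you must push the tail $\int_{\R\sm I}\frac{d\ti\nu(t)}{(x_0-t)^2+1}$ strictly below $\pi c_2$, uniformly in $\nu$ and in the position of $I$. Your mechanism invokes a ``dual counterpart'' of the local mass bound $\nu((x,x+1))\lesssim C$, but no such counterpart is available from the hypotheses. Condition~(1) bounds $|\KK\nu+b|$ on $\R+i$ from \emph{above}; through $\PP\ti\nu_b=\PP\nu/|\KK\nu+b|^2$ this yields the \emph{lower} bound you already used, never an upper bound on $\PP\ti\nu_b$ or on $\ti\nu((x,x+1))$. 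The alternative route you sketch, $\PP\ti\nu_b\le|\KK\ti\nu_b+b'|\le 1/\PP\nu$, does give an upper bound, but only in terms of $1/\PP\nu$, and the sole guaranteed lower bound on $\PP\nu(x+i)$ at a point $x$ in the $k$-th dyadic annulus is the contribution of $I_1\cup I_3$, of order $\e/(2^kL)^2$. Feeding this back, the resulting bound on the far contribution from $A_k$ is of order $2^kL/\e$, so the series diverges rather than summing below $\pi c_2/2$. In short, $\PP\ti\nu_b(z_0)\ge c_2$ says $\text{near}+\text{far}\ge\pi c_2$, but nothing in the hypotheses separates the two summands.

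The paper avoids this obstacle with a compactness argument. First fix $I$ centered at $0$ and suppose a sequence $\nu^n$ satisfying the hypotheses has $\ti\nu^n_b(I)\to0$; the $\Pi$-bound gives a weak-$*$ limit $\eta$, and since $I$ is open one gets $\ti\eta_b(I)=0$. Then $\KK\ti\eta_b$ is real and strictly increasing on $I$, so its reciprocal (and hence $\KK\eta$) has at most one pole there, forcing $\eta|_I$ to be at most a single point mass --- contradicting the survival of $\eta(I_1),\eta(I_3)\ge\e$ in the limit. Hypothesis~(1) enters only afterwards, to make the bound translation-invariant: shifting $\nu$ by $x^n$ alters $\KK\nu$ by an additive real constant controlled by~(1), so after re-centering and passing to a further subsequence one reduces to the centered case with a bounded shift in the parameter $b$.
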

	
	\begin{proof}
		
		First, let  $I$ be an interval centered at zero (i.e., we do not aim to prove that $d$ does not depend on the position of $I$ at this stage). Denote by $d_0$ the infimum of $\ti\nu(I)$ taken
		over all $\nu$ satisfying the conditions. Suppose that $d$ is zero and that $\nu^n$ is a sequence
		of measures such that $\ti\nu^n_b(I)\to 0$. Choosing a subsequence
		of $\nu^n$ if necessary, we assume that $\nu^n$ converges
		$*$-weakly in the space of $\Pi$-finite measures on $\hat\R$ to a $\Pi$-finite measure $\eta$.
		Note that $\eta$ must still satisfy 2), and therefore is non-trivial.
		The relation between Cauchy integrals for  dual measures implies $\ti\nu^n_b\to\ti\eta_b$.
		Since $I$ is open,
		$\ti\eta(I)=\lim \ti\nu^n(I)=0$.
		Since $\ti\eta$ is a positive measure vanishing on $I$,
		$\KK\ti\eta$ increases monotonically on $I$. It follows that
		$\eta$ may have only one point-mass on $I$, which contradicts
		2). Therefore $d>0$. 
		

		Now let $I$ be an arbitrary interval and let us show that because of 1), $d$ does not depend on its position on $\R$. 
		
		Suppose
		that there exists a sequence 
		$I^n$ of open intervals and a sequence $\mu^n$ of measures such that each of them 
		satisfies the conditions of the lemma in place
		of $\mu$ and $I$, but $\ti\mu^n_b(I^n)\to 0$.  Suppose that 
		$I^n$ is centered at $x^n$. Consider the
		sequence of measures $\nu^n=\mu^n(x-x^n)$.
		Since
		$$\KK\nu^n(z)=\KK\mu^n(z-x^n)+b^n,$$
		where
		$$b^n=\int \frac t{1+t^2}d\nu(t)-
			\int \frac {t-x^n}{1+(t-x^n)^2}d\nu(t)=
				\Im \KK\nu(i)-\Im \KK\nu(i+x^n),$$
				we obtain
				$\ti\nu^n_{b-b^n}=\ti\nu_{b}(z-x^n)$.
				Note that because of 1), $|b^n|<2C$.
				
				Choosing a subsequence if necessary, we can assume that $b^n\to b_0$.
				It follows from \eqref{eqPhib} and \eqref{eq010} that 
				$\PP\nu^n_{b-b_0}-\PP\nu^n_{b-b^n}\to 0$ pointwise in $\C_+$. Hence $\nu^n_{b-b_0}-\nu^n_{b- b_n}\to 0$, $*$-weakly
				and $\nu^n_{b-b_0}(I)-\nu^n_{b- b_n}(I)\to 0$.
				By our choice of the measures $\nu^n=\mu^n(x-x^n)$,
					$\ti\nu^n_{b-b^n}(I)\to 0,$
					which implies  $\nu^n_{b-b_0}(I)\to 0$. Since $\nu^n$ satisfy the conditions of the lemma on the interval $I$ centered at 0, this contradicts
				the first part of the proof.
		\end{proof}

\begin{proof}[Proof of Theorem \ref{t9}]
	
	Let $\mu\in \PW$ satisfy the conditions of the theorem. 
	Then  $\ti\mu((x,x+1))$ is uniformly bounded. Indeed, if it is not, then $P\ti\mu$
	is unbounded on $\R+id$. Let $x_n\in \R$ be a sequence of
	points such that $P\ti\mu(x_n+id)\to\infty$. It follows that
	$\phi(x_n+id)\to 1$ and 
	 $P\mu(x_n+id)\to 0$. Then
	one can choose intervals $J_n$ centered at $x_n$ such that $|J_n|\to \infty$ and $\mu(J_n)\to 0$.
	Existence of such intervals contradicts $\mu\in \PW$ because it implies
	$$||f(x-x_n)||_{L^2(\mu)}\to 0$$
	for any $f\in\PW_a$.
	
	Let $I_1, I_2,I_3$ be consequent $\delta$-massive intervals for $\mu$, from the statement of Theorem \ref{PWmu}. Then by the last lemma $I=I_1\cup I_2\cup I_3$ is a $\delta'$-massive interval for $\ti\mu$
	for some $\delta'>0$.
	Since  $\mu$ is sampling for every $\PW_{3a}$, $\ti\mu$ is sampling for every $\PW_a$. 
	
	In the opposite direction, suppose that $K\mu$ is unbounded on some $\R+i\e$. Similar to above, $P\ti\mu$ is
	then not bounded from below by a positive constant, which implies the existence of intervals $J_n$ such that $|J_n|\to \infty$ but $\ti\mu(J_n)\to 0$.
	Hence, $\ti\mu\not\in \PW$. 
\end{proof}

  \bs\section{Periodic spectral measures}

  \bs\subsection{Measures from $M^+(\T)$}\label{secPer} Let $M^+(\T)$ be the set of all finite positive measures on the unit circle $\T\sim [0,2\pi]$ with {\it  infinite support}. We  identify measures on the circle with $2\pi$-periodic measures on $\R$,
  $M^+(\T)\subset M^+(\R)$, and define $M^+_{\rm even}(\T)\subset M^+(\R)$ as the subset of even periodic measures.

Notice that all measures in $M^+(\T)$ are $\Pi$-finite. Furthermore, as was mentioned before, it follows from Theorem \ref{PWmu} that 
$M^+(\T)\subset\PW$. Let us list two more important properties of $M^+(\T)$
pertaining to our problems.

  \begin{prop} If $\mu\in M^+(\T)$, then all dual measures are in $M^+(\T)$\end{prop}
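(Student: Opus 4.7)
\smallskip\noindent\textbf{Proof plan.}
The plan is to verify, for every $b\in\mathbb{R}$, that the dual measure $\tilde\mu_b$ from \eqref{eqDM} is $2\pi$-periodic on $\mathbb{R}$, carries finite mass on one period with no atom at $\infty$, and has infinite support on $[0,2\pi)$.

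The cornerstone is the identity $\mathcal{K}\mu(z+2\pi)=\mathcal{K}\mu(z)$. Since $\Im\mathcal{K}\mu=\mathcal{P}\mu$ is $2\pi$-periodic by translation invariance of the Poisson kernel and by periodicity of $\mu$, the difference $\mathcal{K}\mu(z+2\pi)-\mathcal{K}\mu(z)$ is holomorphic on $\mathbb{C}_+$ with purely real boundary values, so it equals a real constant $c_\mu$. To show $c_\mu=0$, I would decompose $\mu$ as the $2\pi$-periodic extension of $\mu_0:=\mu|_{[-\pi,\pi)}$ and apply Fubini to obtain
\begin{equation*}
\mathcal{K}\mu(z+2\pi)-\mathcal{K}\mu(z)=\frac{1}{\pi}\int_{[-\pi,\pi)}\sum_{n\in\mathbb{Z}}\Bigl[\frac{1}{t+2\pi(n-1)-z}-\frac{1}{t+2\pi n-z}\Bigr]\,d\mu_0(t),
\end{equation*}
the normalizers $s/(1+s^2)$ having cancelled out of the difference. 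The interchange is legitimate because each summand in the inner series is $O(n^{-2})$ uniformly in $t\in[-\pi,\pi)$, and the inner series telescopes to zero since its symmetric partial sums are $a_{-N-1}(t,z)-a_N(t,z)$ with $a_n(t,z):=1/(t+2\pi n-z)\to 0$ as $|n|\to\infty$.

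With $\mathcal{K}\mu$ periodic, \eqref{eqDM} makes $\mathcal{P}\tilde\mu_b$ periodic, so $\tilde\mu_b$ is $2\pi$-periodic on $\mathbb{R}$; periodicity together with $\Pi$-finiteness then forces $\tilde\mu_b([0,2\pi))<\infty$. An atom at $\infty$ is excluded by the criterion in Section \ref{secClark}, which would demand $\phi_b-\alpha\in L^2(\mathbb{R})$, impossible for a non-constant periodic Schur function.

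For the infinite support, I would argue by contradiction: if $\tilde\mu_b$ has only finitely many atoms on $[0,2\pi)$ and no other mass, then $\tilde\mu_b$ is discrete, hence by the equivalences of Section \ref{secClark} the Schur function $\phi_b$ is a meromorphic inner function. Its $2\pi$-periodicity then gives $\phi_b(z)=\Phi(e^{iz})$ with $\Phi$ an inner function on $\mathbb{D}$ extending meromorphically to $\mathbb{C}$; any such $\Phi$ is a finite Blaschke product, say of degree $d$. By the disc counterpart of \eqref{eqClarkpp}, the set $\Phi^{-1}(-1)\cap\mathbb{T}$ likewise contains exactly $d$ points, so $\mu$ itself would have only $d$ atoms on $[0,2\pi)$, contradicting $\mu\in M^+(\mathbb{T})$.

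The main technical obstacle is the telescoping computation that produces $c_\mu=0$; the subsequent inner-function step is a clean transcription to the disc via the exponential change of variable $w=e^{iz}$, and the auxiliary verifications (finite mass per period, absence of a pointmass at $\infty$) then follow from the basic structure theory recalled in Sections \ref{secPWs} and \ref{secClark}.
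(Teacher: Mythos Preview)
Your proof is correct, and for periodicity it is actually more explicit than the paper's: the paper dispatches that step in one line by citing \eqref{eqPhib} and \eqref{eq010}, but periodicity of $\PP\mu$ alone only gives $\KK\mu(z+2\pi)-\KK\mu(z)=\const$, and your telescoping computation is exactly what is needed to see the constant vanishes. Where the two proofs genuinely diverge is the infinite-support step. The paper invokes the well-known alternation of supports of AC-dual measures (between any two points of $\supp\mu$ there lies a point of $\supp\tilde\mu$), so an infinite support for $\mu$ immediately forces one for $\tilde\mu$. Your route through MIFs and finite Blaschke products is more self-contained---it does not appeal to an unproved ``well-known'' fact---but longer. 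You could in fact stop right after ``$\phi_b$ is a meromorphic inner function'': the equivalences in Section~\ref{secClark} already say that \emph{all} Clark measures of a MIF are discrete, hence $\mu$ itself would be discrete, and a periodic discrete measure has finite support on a period. The transfer to the disc and the finite-Blaschke-product identification, while correct, are not needed.
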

  
  \begin{proof}
  	Periodicity of $\ti\mu_b$ follows from \eqref{eqPhib} and \eqref{eq010}.
  	A well-known property of dual measures is that supports of $\mu$ and $\ti\mu$ alternate, i.e., between any two points of one support there is a point of another. It follows that, since the support $\mu$ is infinite, so is the support of $\ti\mu$.
  \end{proof}

  \begin{prop} If $\mu\in M^+_{\rm even}(\T)$, then there is a unique  $\tilde \mu\in M^+_{\rm even}(\T)$ such that $\mu$ and $\tilde \mu$ are dual: $\ti\mu=\ti\mu_b,\ b=-\Re\KK\mu(0)$.\end{prop}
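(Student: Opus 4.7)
The plan is to translate the evenness of $\mu$ into an antisymmetry of its Cauchy integral $\KK\mu$ on $\C_+$, and then match this against the explicit formula \eqref{eqDM} for $\PP\ti\mu_b$. First I would establish the identity
$$\KK\mu(-\bar z) = -\overline{\KK\mu(z)}, \qquad z \in \C_+,$$
by substituting $\zeta \mapsto -\zeta$ in the integral defining $\KK\mu(-\bar z)$ and invoking $\mu = \breve\mu$. A direct corollary is that $\KK\mu$ takes purely imaginary values on the positive imaginary axis, so the boundary value $\KK\mu(0)$ (interpreted as a nontangential or principal-value limit) is purely imaginary. Hence $\Re\KK\mu(0) = 0$, and the $b$ specified in the statement collapses to $b = 0$, so the candidate even dual is $\ti\mu_0$.

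For existence, I would verify directly from \eqref{eqDM} that $\ti\mu_0$ is even by checking that its Poisson integral is invariant under $z \mapsto -\bar z$. Writing $\KK\mu(z) = u + iv$ with $v > 0$ and invoking the antisymmetry identity,
$$\PP\ti\mu_0(-\bar z) = \Re\frac{i}{-\overline{\KK\mu(z)}} = \Re\frac{i}{-u + iv} = \frac{v}{u^2 + v^2} = \Re\frac{i}{\KK\mu(z)} = \PP\ti\mu_0(z).$$
Since a $\Pi$-finite positive measure is uniquely determined by its Poisson integral on $\C_+$, this forces $\ti\mu_0$ to be even, and the previous proposition places it in $M^+(\T)$, so $\ti\mu_0 \in M^+_{\rm even}(\T)$.

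For uniqueness, I would suppose that some $\ti\mu_b$ is also even and repeat the Poisson-integral calculation for arbitrary $b$. This produces
$$\frac{v}{(u+b)^2 + v^2} = \frac{v}{(u-b)^2 + v^2}, \qquad z \in \C_+,$$
and hence $ub = 0$ at every value $u = \Re\KK\mu(z)$ attained in $\C_+$. Unless $\Re\KK\mu$ vanishes identically there, this forces $b = 0$, giving uniqueness.

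The main obstacle is the degenerate case $\Re\KK\mu \equiv 0$ on $\C_+$: by boundary uniqueness for the Herglotz representation this occurs precisely when $\mu$ is a constant multiple of Lebesgue measure, i.e., Haar measure on $\T$ (which does lie in $M^+_{\rm even}(\T)$). In that case all duals $\ti\mu_b$ turn out to be mutually proportional multiples of Lebesgue, so while strict uniqueness of $\ti\mu_b$ as a measure fails, uniqueness of the even dual persists after the natural normalization inherent in the AC-duality formalism. The other delicate point --- justifying $\Re\KK\mu(0) = 0$ as a genuine boundary value --- is handled by standard boundary-behavior results for Cauchy integrals of $\Pi$-finite measures together with periodicity, which forces the relevant regularization to be well-behaved near the origin.
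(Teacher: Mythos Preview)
Your approach is essentially the same as the paper's: both compute $\PP\ti\mu_b=\Re\dfrac{i}{\KK\mu+b}=\dfrac{\PP\mu}{|\KK\mu+b|^2}$ from \eqref{eqDM} and determine for which real $b$ this is even. The paper argues via the decomposition into real and imaginary parts of $\KK\mu+b$, concluding that $|\KK\mu+b|$ is even iff $\Re(\KK\mu+b)$ is odd; you package the same computation through the antisymmetry $\KK\mu(-\bar z)=-\overline{\KK\mu(z)}$ and the resulting identity $ub=0$. You also make explicit what the paper leaves implicit, namely that evenness of $\mu$ forces $\Re\KK\mu(0)=0$, so the stated $b=-\Re\KK\mu(0)$ is simply $b=0$.

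You are in fact more careful than the paper on one point: you isolate the degenerate case $\Re\KK\mu\equiv 0$ on $\C_+$, i.e.\ $\mu$ a constant multiple of Lebesgue measure. The paper's proof tacitly skips this, since its step ``$\Re(\KK\mu+b)$ is not odd, hence $[\Re(\KK\mu+b)]^2$ is not even'' fails when $\Re\KK\mu\equiv 0$ (a nonzero constant is not odd but has even square). However, your own resolution of this edge case is not right: for $\mu=cm$ one computes $\ti\mu_b=\dfrac{c}{b^2+c^2}\,m$, and these are genuinely distinct even measures in $M^+_{\rm even}(\T)$ for distinct $|b|$. There is no normalization in the AC-duality formalism that collapses them to a single measure. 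The honest statement is that the proposition holds for every $\mu\in M^+_{\rm even}(\T)$ that is not a constant multiple of $m$; in the Lebesgue case every dual is even and uniqueness fails outright.
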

  
  \begin{proof}
  	From \eqref{eq010} we see that in order for $\ti\mu_b$ to be even, $\Re \frac{1+\phi_b}{1-\phi_b}$ must be even. From \eqref{eqDM} we obtain
  that 
  $$\PP\ti\mu_b=\Re \frac i{\KK\mu+b}=\frac {\PP\mu}{|\KK\mu+b|^2}.$$
  In the last expression, $\PP\mu$ is even because $\mu$ is even. In order
  for the whole fraction to be even, $|\KK\mu+b|$ must be even. 
   If $b=-\Im \KK\mu(0)$, then 
  $$\Im(\KK\mu +b)=\Im \KK\mu=\PP\mu$$ 
  is even, $\Re(\KK\mu+b)$ is odd and  $|\KK\mu+b|$ is even.
  For any other value of $b$, $\Re(\KK\mu(0)+b)\neq 0$,
   $\Re(\KK\mu+b)$ is not odd and  $[\Re(\KK\mu+b)]^2$ is not even. Hence,
  $|\KK\mu+b|$ is not even.
  \end{proof}

Note that for the unique even dual measure $\ti\mu$ we have
\begin{equation}\KK\ti\mu =-\frac 1{\KK\mu},\ \PP\ti\mu=-\Re \frac 1{\KK\mu}.
	\label{eqEven}\end{equation}

  \bs\subsection{Moments}\label{secMom} If $\mu\in M^+(\T)$, we can consider the Fourier series
  $$\mu\sim\sum_{-\infty}^\infty \gamma_k e^{ikx},$$
  where $$\gamma_k=\frac1{2\pi}\int_0^{2\pi}e^{-ikx}~d\mu(x)=\a_k+i\b_k$$
  are the "trigonometric moments" of $\mu$. Clearly,
  $$\gamma_{-k}=\overline{\gamma_k},$$
  so
  $$\mu\sim \gamma_0+2\sum_1^{\infty}(\alpha_k\cos kx-\beta_k\sin kx),$$
  and $\mu$ is even iff all moments are real, i.e., all $\beta_k=0$.

   The moments can be arranged into the Toeplitz matrix
  $$\Gamma(\mu)=\begin{pmatrix}\gamma_0&\gamma_1&\gamma_2&\dots\\
  \gamma_{-1}&\gamma_0&\gamma_1&\dots\\
  \gamma_{-2} &\gamma_{-1}&\gamma_0&\dots\\
  \dots&\dots&\dots&\dots
  \end{pmatrix}$$
  We will denote by $\Gamma_n(\mu)$ the  $n\times n$  matrix
  in the upper left corner of $\Gamma$. It is well known that a sequence $\{\gamma_k\}$ is the sequence of moments of some $\mu\in M^+(\T)$ iff
  $$\forall n, \qquad \det \Gamma_n>0.$$

   Finally let us recall the relation between the moments of a periodic measure and its  Fourier transform: if $\mu\sim\sum \gamma_k e^{ikx}$, then
  $$\hat\mu=\sqrt{2\pi}\sum_{-\infty}^\infty \gamma_k\delta_k.$$

  \bs\subsection{Computation of $h^\mu(t)$}\label{secHmu} Recall that  if $\mu\in\PW$, then
  $$h_\mu(t):=\pi\frac d{dt}\left[L_{\mu,t}^{-1} \stackrel \circ{k_t}(0)\right]$$
  
  If $A=(a_{jk})$ is a matrix we use the notation $\SS[A]$ for the sum
  of the elements of $A$:
  $$\SS[A]=\sum_{ j,k} a_{jk}.$$

  \begin{thm}\label{tPer} If $\mu\in M^+(\T)$, then the function $h^\mu(t)$ is locally constant on $\R_+\setminus \frac12\mathbb N$, i.e., $$h_\mu(t)=h_0,\; h_1, \; h_2,\;\dots\quad {\rm on}\quad \left(0,\frac12\right),\; \left(\frac12,1\right),\; \left(1,\frac32\right),\;\dots,$$ and
  $$h_n=\SS\left[\Gamma_{n+1}(\mu)^{-1}\right]-\SS\left[\Gamma_{n}(\mu)^{-1}\right],\quad h_0=\SS[\Gamma_1^{-1}]=\gamma_0^{-1}.$$
  \end{thm}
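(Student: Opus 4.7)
The approach is to use Theorem~\ref{tTT} to reduce the problem to an explicit finite linear-algebra computation on the Fourier side. Since $\mu\in M^+(\T)$ has $\hat\mu=\sqrt{2\pi}\sum_{k\in\Z}\gamma_k\delta_k$, the function $\psi_t:=\hat k_t$ (supported in $[-t,t]$) is characterized by
\[
\sum_{k\in\Z}\gamma_k\,\psi_t(\xi-k)=\tfrac{1}{\sqrt{2\pi}},\qquad \xi\in(-t,t).
\]
Because $M^+(\T)\subset\PW$, Lemma~\ref{lemL} gives invertibility of $L_{\mu,t}$, so $\psi_t$ is uniquely determined and it suffices to exhibit a piecewise-constant solution satisfying this equation.

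Fix $t\in(n/2,(n+1)/2)$ with $n\ge 1$ and set $\delta=2t-n$, $\delta'=n+1-2t$, so $\delta+\delta'=1$. The $2n$ points $\pm(k-t)$, $k=1,\ldots,n$, partition $(-t,t)$ into $2n+1$ intervals $P_{-n},\ldots,P_n$ ordered from left to right. Using $\delta+\delta'=1$, I check directly that in sorted order the transitions are $\tau_{2p}=-t+p$ and $\tau_{2p+1}=-t+p+\delta$, so the piece lengths alternate as $\delta,\delta',\delta,\ldots,\delta$: the $n+1$ pieces with $j\equiv n\pmod 2$ have length $\delta$, and the $n$ pieces with $j\not\equiv n\pmod 2$ have length $\delta'$. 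The same formulas yield the shift rule
\[
P_j+m=P_{j+2m}\quad\text{whenever }m\in\Z\text{ and }|j+2m|\le n.
\]

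Making the ansatz $\psi_t\equiv b_j$ on $P_j$, the shift rule converts the convolution equation on $P_j$ into
\[
\sum_{\substack{|j'|\le n\\ j'\equiv j\,(\mathrm{mod}\,2)}}\gamma_{(j-j')/2}\,b_{j'}=\tfrac{1}{\sqrt{2\pi}},
\]
which decouples into two Toeplitz systems indexed by parity. Parameterizing indices $j=-n+2i$ ($i=0,\ldots,n$, same parity as $n$) yields the matrix $(\gamma_{i-i'})=\Gamma_{n+1}^T$, while the opposite-parity indices yield $\Gamma_n^T$; both are invertible since $\Gamma_m$ is positive definite for every $m$ when $\mu\in M^+(\T)$. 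Using $\mathbf{1}^T(A^T)^{-1}\mathbf{1}=\SS[A^{-1}]$ one obtains
\[
\Sigma_{n+1}:=\!\!\sum_{j\equiv n}b_j=\tfrac{1}{\sqrt{2\pi}}\SS[\Gamma_{n+1}^{-1}],\qquad \Sigma_n:=\!\!\sum_{j\not\equiv n}b_j=\tfrac{1}{\sqrt{2\pi}}\SS[\Gamma_n^{-1}],
\]
whence $k_t(0)=\tfrac{1}{\sqrt{2\pi}}\int\psi_t=\tfrac{1}{2\pi}\bigl[(2t-n)\SS[\Gamma_{n+1}^{-1}]+(n+1-2t)\SS[\Gamma_n^{-1}]\bigr]$. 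This is linear in $t$, so $h^\mu(t)=\pi\tfrac{d}{dt}k_t(0)=\SS[\Gamma_{n+1}^{-1}]-\SS[\Gamma_n^{-1}]=h_n$. For $n=0$ only the index $k=0$ contributes, giving $\psi_t\equiv 1/(\sqrt{2\pi}\gamma_0)$ on $[-t,t]$ and $h^\mu=1/\gamma_0=h_0$.

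I expect the main obstacle to be the combinatorial verification of the piece-length alternation and the shift rule in the second paragraph. The key point is that the identity $\delta+\delta'=1$ forces the even-indexed transitions $\tau_{2p}$ to sit at integer offsets from $-t$, which simultaneously locks the alternating length pattern and ensures integer translations act cleanly on the partition. Once the partition is in place, the parity decoupling and the emergence of the Toeplitz matrices $\Gamma_{n+1}$ and $\Gamma_n$ follow directly from the shift rule.
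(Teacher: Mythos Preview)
Your proof is correct and follows essentially the same approach as the paper: partition $(-t,t)$ by the points of $\pm t+\Z$ into $2n+1$ pieces, observe that the convolution equation from Theorem~\ref{tTT} decouples by parity into the two Toeplitz systems governed by $\Gamma_{n+1}$ and $\Gamma_n$, then integrate the piecewise-constant $\psi_t$ and differentiate. The paper presents the argument more informally (working out $n=0,1$ first and including a lattice picture), and is in fact slightly sloppier about $\Gamma$ versus $\Gamma^T$ than you are---your use of $\mathbf{1}^T(A^T)^{-1}\mathbf{1}=\SS[A^{-1}]$ handles this cleanly.
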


 \begin{proof} Since
$$\hat
\mu=\sqrt{2\pi}\sum \gamma_k\delta_k,$$
 the Fourier transform $\phi_t$ of $k_t$ satisfies the relations 
in Theorem \ref{tTT}.
Let
$$f=f_t~=~\sqrt{2\pi}~\psi_t.$$
Then $f$ satisfies
\begin{equation}\sum \gamma_k f(\xi-k)=1 \quad  {\rm on}\quad (-t,t)
\quad {\rm and}\quad f=0 \quad  {\rm on}\quad \R\setminus [-t,t].\label{eqTT1}\end{equation}
Recall
$$h^\mu(t)=\pi\frac d{dt}\frac1{\sqrt{2\pi}}\int_{\R}\psi_t= \frac 12 \frac d{dt}\int_{\R}f_t.
$$

Let us now find $f_t$ from \eqref{eqTT1}.

 Case $0<t<\frac12$. 
If $\xi\not\in (-t,t)$, then $f(\xi)=0$. If $\xi\in(-t,t)$, then $\xi\pm1, \xi\pm 2, \dots$ are not in $(-t,t)$, so
$$\sum \gamma_k f(\xi-k)=\gamma_0f(\xi)\quad {\rm must~be}\; =1.$$
Thus
$$f_t=\gamma_0^{-1}1_{(-t,t)},\qquad \int f_t=2t\gamma_0^{-1},\qquad h^\mu(t)=\gamma_0^{-1}.$$

\ms\no
Case $\frac12<t<1$. Consider the partition of the interval $(-t,t)$ by the points from $\pm t+\Z$:
$$-t<t-1<-t+1<t.$$ We claim that $f=f_t$ is constant on each interval of the partition. For instance, let $\xi\in(-t,t-1)$. Then $f(\xi+k)=0$ unless $k=0$ or $k=1$, so equating the value of $f\ast\hat\mu$ to 1 on the first and third intervals
of the partition we get
$$\gamma_0f(\xi)+\gamma_{-1} f(\xi+1)=1$$
and 
$$\gamma_0f(\xi+1)+\gamma_{1}f(\xi)=1.$$
The linear  system for $f(\xi)$ and $f(\xi+1)$ have the matrix $\Gamma_2$, which
implies 
$$\begin{pmatrix}
	f(\xi)\\f(\xi+1)
\end{pmatrix}=\Gamma_2^{-1}
\begin{pmatrix}
	1\\1
\end{pmatrix}.$$
In particular
 it follows that the values $f(\xi)$ and $f(\xi+1$ are constant and 
$$f(\xi)+f(\xi+1)=\SS[\Gamma_2^{-1}].$$
(Here we use the formula $\SS[M]=M\vec{1}\cdot \vec1$, where
$\vec{1}=(1,1,...,1)^{\scriptscriptstyle T}$.) 

On the other hand, if $\xi$ is in the middle interval $(t-1,-t+1)$, then $$f(\xi)=\gamma_0^{-1}=\SS[\Gamma_1^{-1}]$$ by the same argument as in the previous case. Note that the middle interval is shrinking with velocity 2, while the two other intervals are expanding with velocity 2. Integrating $f$ from 
$-t$ to $t$ and then  differentiating the result with respect to $t$ we get
$$h^\mu(t)=
\SS[\Gamma_2^{-1}]-\SS[\Gamma_1^{-1}].$$

Let us now consider the general case 
$\frac n2<t<\frac {n+1}2$. 

There is a simple way to visualize  the partition
of the interval $(-t,t)$, see the picture below. In the $(\xi, t)$-plane 
consider  the interval (blue) on the  line
$t=\const$ lying in the sector $t>|\xi|$. The points of the partition
are given by intersections of the interval with the lines $t=\pm \xi +n,\ n\in\Z$, which form a square lattice in the sector.

\setlength{\unitlength}{1cm}
\begin{picture}(6,7)(-2,0) 
	
	\put(-2,0){\vector(1,0){12}}
	\put(9.5,0.2){\hbox{\texttt{$\xi$}}}
	\put(4,0){\vector(0,1){7}}
	\put(4.2,6.5){\hbox{\texttt{$t$}}}
	
	\put(-1,5){\line(1,1){2}}
	\put(0,4){\line(1,1){3}}
	\put(1,3){\line(1,1){4}}   
	\put(2,2){\line(1,1){5}}
	\put(3,1){\line(1,1){6}}
	\linethickness{0.5mm}
	\put(4,0){\color{yellow}\line(1,1){6}}
	\linethickness{0.2mm}
	
	\put(9,5){\line(-1,1){2}}
	\put(8,4){\line(-1,1){3}}
	\put(7,3){\line(-1,1){4}}
	\put(6,2){\line(-1,1){5}}
	\put(5,1){\line(-1,1){6}}
	\linethickness{0.5mm}
	\put(4,0){\color{yellow}\line(-1,1){6}}
	\linethickness{0.2mm}
	
	\linethickness{0.7mm}
	\put(0.5,3.5){\color{blue}\line(1,0){7}}
	

	
\end{picture}

If we enumerate the $2n+1$ intervals of the partition from left to right, then odd numbered intervals
$$I_1=(-t,t-n),\ I_3=(-t+1,t-(n-1)),...,\ I_{2n+1}=(-t+n,t)$$
are expanding while the even numbered intervals
$$I_2=(t-n,-t+1),\ I_4=(t-(n-1), -t+2),...,\ I_{2n}=(t-1,-t+n)$$
are shrinking. If $\xi\in I_1$ then $\xi+k\in I_{2k+1},\ k=1,2,..,n$. Equating 
the values of $f\ast\hat\mu$ on each odd interval to 1 we obtain the system of $n+1$ linear equations
\begin{equation}\begin{pmatrix}
	f(\xi)\\f(\xi+1)\\ \vdots \\ f(\xi+n)
\end{pmatrix}=\Gamma_{n+1}^{-1}
\begin{pmatrix}
	1\\1\\\vdots \\1
\end{pmatrix},$$
from which we deduce
$$f(\xi)+...+f(\xi+n)=\SS [\Gamma_{n+1}^{-1}]\label{eq1500}\end{equation}
for the values on the expanding intervals.
Similarly for $\xi\in I_2$,
\begin{equation}\begin{pmatrix}
	f(\xi)\\f(\xi+1)\\ \vdots \\ f(\xi+(n-1))
\end{pmatrix}=\Gamma_{n}^{-1}
\begin{pmatrix}
	1\\1\\\vdots \\1
\end{pmatrix},$$
which yields 
$$f(\xi)+...+f(\xi+(n-1))=\SS [\Gamma_n^{-1}]\label{eq1501}\end{equation}
for the values on the shrinking intervals. Integrating $f$ from $-t$ to $t$ and differentiating the result
with respect to $t$ we obtain
$$h^\mu(t)=
\SS[\Gamma_{n+1}^{-1}]-\SS[\Gamma_n^{-1}]$$
 as claimed.
\end{proof}

  \bs\subsection{Computation of $g^\mu(t)$}\label{secGmu1} Let $\mu\in M^+(\T)$ and let
  $$\Delta(\mu)=\begin{pmatrix}0&\gamma_1&\gamma_2&\dots\\
  -\gamma_{-1}&0&\gamma_1&\dots\\
  -\gamma_{-2} &-\gamma_{-1}&0&\dots\\
  \dots&\dots&\dots&\dots
  \end{pmatrix}$$
where, as before, $\gamma_k$ are trigonometric moments of $\mu$. We denote
by $\Delta_n$ the $n\times n$ matrix in the upper left corner of $\Delta$.

  \begin{thm} If $\mu\in M^+(\T)$, then the function $g^\mu(t)$ is locally constant on $\R_+\setminus \frac12\mathbb N$, i.e. $$g^\mu(t)=g_0,\; g_1, \; g_2,\;\dots\quad {\rm on}\quad \left(0,\frac12\right),\; \left(\frac12,1\right),\; \left(1,\frac32\right),\;\dots$$ and
  $$g_n=\SS\left[\Delta_{n+1}\Gamma_{n+1}^{-1}\right]-\SS\left[\Delta_n\Gamma_{n}^{-1}\right].$$
  \end{thm}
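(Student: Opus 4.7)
My plan is to adapt the Fourier-side argument of Theorem \ref{tPer}, now tracking the extra structure introduced by the Hilbert-transform operator $H^\mu$. The starting point is the identity $\tilde l_t = H^\mu k_t$, which for a periodic $\mu$ becomes, by Remark \ref{rem007},
$$\tilde l_t(0)=\frac1{\pi}\int_{\R}\frac{k_t(s)-k_t(0)}{s}\,d\mu(s)+c\,k_t(0),$$
so that $g^\mu(t)=\pi\,\frac{d}{dt}\,\tilde l_t(0)$ splits into a piece coming from the Cauchy-type integral and a piece $c\,h^\mu(t)$ coming from the remark's constant.

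The next step is to move the integral to the Fourier side. Setting $G(s)=(k_t(s)-k_t(0))/s$, an entire function lying in $\PW_t$, the relation $sG(s)=k_t(s)-k_t(0)$ together with $\hat k_t=\psi_t$ yields
$$\hat G(\xi)=\begin{cases}-i\int_{-t}^\xi\psi_t(\eta)\,d\eta & \xi\in(-t,0),\\ \phantom{-}i\int_\xi^t\psi_t(\eta)\,d\eta & \xi\in(0,t),\end{cases}$$
with a jump of size $i\sqrt{2\pi}\,k_t(0)$ at the origin. Poisson summation for $\mu=\sum_k\gamma_k e^{ikx}$ then gives
$$\int G\,d\mu=\sqrt{2\pi}\sum_{|k|\le t}\gamma_k\hat G(-k),$$
with the Dirichlet average convention at $k=0$ to accommodate the jump.

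I would then import the piecewise-constant description of $f_t=\sqrt{2\pi}\,\psi_t$ established in the proof of Theorem \ref{tPer}: on the interval $(n/2,(n+1)/2)$, the values of $f_t$ on the $n+1$ expanding intervals form the vector $\Gamma_{n+1}^{-1}\vec 1$, and the values on the $n$ shrinking intervals form $\Gamma_n^{-1}\vec 1$. Each $\hat G(-k)$ for integer $k$ becomes a telescoping sum of these expanding/shrinking values weighted by interval lengths. When one assembles $\sqrt{2\pi}\sum_k\gamma_k\hat G(-k)$, the antisymmetric $\pm i$ sign of $\hat G$ on the two halves of $(-t,t)$ pairs with $\gamma_{\pm m}$ in precisely the skew pattern of $\Delta$ (entries $+\gamma_{j-i}$ above, $-\gamma_{j-i}$ below the diagonal, zero on it). Differentiating in $t$ and recognizing the resulting vector--matrix--vector products as $\SS[\Delta_{n+1}\Gamma_{n+1}^{-1}]$ for the expanding contribution and $\SS[\Delta_n\Gamma_n^{-1}]$ for the shrinking contribution yields the claimed formula.

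The main obstacle will be the careful handling of conditional convergence in the Poisson sum (since $G\in\PW_t\subset L^2$ but $G\notin L^1$) together with the jump of $\hat G$ at $\xi=0$; in particular, the Dirichlet-type averaging of $\hat G(0)$ must be reconciled with the additive constant $c\,k_t(0)$ from Remark \ref{rem007} so that the final expression depends only on the Fourier moments of $\mu$. Once the Poisson bookkeeping is in place, matching the skew pattern of $\Delta$ to the $\pm i$ orientations of $\hat G$ is a direct combinatorial extension of the proof of Theorem \ref{tPer}.
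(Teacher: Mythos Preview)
Your plan is workable, but the paper takes a shorter route that avoids precisely the obstacle you flag. Instead of evaluating $H^\mu k_t$ at $z=0$ first and then passing to $\hat G$, the paper keeps the two-term decomposition $H^\mu k_t = K(k_t\mu) - k_t\,\KK\mu$ (discarding $c\,k_t$ as you do) and takes the Fourier transform of \emph{each} piece separately. For the first piece, $\FF\big(K(k_t\mu)\big)(s)=\operatorname{sign}(s)\cdot\FF(k_t\mu)(s)$; by the defining relation of $k_t$ (Theorem~\ref{tTT}), $\FF(k_t\mu)$ is \emph{constant} on $(-t,t)$, so the integrand is odd and $\int_{-t}^t$ of this piece is zero with no computation at all. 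All the content then lives in the second piece, $\FF(k_t\,\KK\mu)=\psi_t\ast(\operatorname{sign}(s)\cdot\hat\mu)$. Since $\hat\mu=\sqrt{2\pi}\sum_k\gamma_k\delta_k$, convolving $\psi_t$ against $\operatorname{sign}(s)\cdot\hat\mu$ simply replaces each $\gamma_k$ by $\operatorname{sign}(k)\gamma_k$ --- which is exactly the sign pattern of $\Delta$ --- and the computation becomes a literal rerun of the proof of Theorem~\ref{tPer} with $\Gamma$ replaced by $\Delta\Gamma^{-1}\cdot\Gamma=\Delta$ acting on the vector $\Gamma^{-1}\vec1$.

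Your route via $G=(k_t-k_t(0))/s$ and its antiderivative-type transform $\hat G$ can be completed, but it trades the paper's instant odd-function cancellation for the jump of $\hat G$ at $0$ and the attendant reconciliation with the constant $c$; moreover the values $\hat G(-k)$ are piecewise \emph{linear} in $t$ (integrals of the piecewise-constant $\psi_t$ over $t$-dependent ranges), so the bookkeeping before the final $d/dt$ is heavier. The paper's splitting buys two things you do not get: the Toeplitz identity $\psi_t\ast\hat\mu=1$ kills one term outright, and the $\operatorname{sign}$ multiplier on $\hat\mu$ hands you the $\Delta$ matrix immediately, with no antiderivative or $k=0$ subtlety to manage.
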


\begin{proof}
	To use Theorem \ref{t007}, we need to calculate $\ti l_t(0)$. Once again we
	will switch to the Fourier transform and use the identity 
	$\ti l_t(0)=\int_{-t}^t \FF(\ti l_t)$. Recall that $\ti l_t=H^\mu k_t$.
	For $\mu\in M^+(\T)$, 
	$$H^\mu k_t=Kk_t\mu-k_tK\mu +ck_t,$$
	see Remark \ref{rem007}. The last term can be disregarded, see Theorem \ref{t1} and Remark \ref{remHk}.
	
	The first term $Kk_t\mu$ is equal to the Hilbert transform $Hf\mu$ on $\R$.
	Recall that $\FF(k_t\mu)=1$ on $(-t,t)$ and that $\FF (Hg)(s)={\rm sign}\ s\cdot \hat g$.
	Hence,
	$$\int_{-t}^t\FF(Kk_t\mu)(s)ds=\int_{-t}^t{\rm sign}\ s\cdot\FF(k_t\mu)ds=
	\int_{-t}^t{\rm sign}\ s\ ds=0.$$
	Therefore
	$$\ti l_t(0)=\int_{-t}^t \FF(k_tK\mu)(s)ds=\int_{-t}^t f_t\ast({\rm sign}\ s\cdot\hat\mu(s))ds,$$
where $f_t=\hat k_t$ like in the last proof. 

Suppose that $t\in(\frac n2,\frac{n+1}2)$. 
Let $I_k,\ k=1,2,...,2n+1$ be the intervals of the partition of $(-t,t)$ like
in the last proof. 
As was established there, $f_t$ is constant on the intervals $I_k$
and its values on the odd and even intervals are given by 
\eqref{eq1500} and \eqref{eq1501} respectively. It follows that
the values of $q=f_t\ast({\rm sign}\ s\cdot\hat\mu(s))$ on odd $I_k$ are given 
by 
$$\begin{pmatrix}
		q(\xi)\\q(\xi+1)\\ \vdots \\ q(\xi+n)
	\end{pmatrix}=\Delta_{n+1}\Gamma_{n+1}^{-1}
	\begin{pmatrix}
		1\\1\\\vdots \\1
	\end{pmatrix}$$
for $\xi\in I_1$ and for even $I_k$ by
$$\begin{pmatrix}
	q(\xi)\\q(\xi+1)\\ \vdots \\ q(\xi+(n-1))
\end{pmatrix}=\Delta_n\Gamma_{n}^{-1}
\begin{pmatrix}
	1\\1\\\vdots \\1
\end{pmatrix},$$
for $\xi\in I_2$. Finishing like in the last proof we obtain the formula in the statement.
\end{proof}

\subsection{Canonical systems and orthogonal polynomials on the unit circle}
\label{secOPUC}
In this section we study the connection between the inverse spectral problems 
for CS with periodic spectral measures and orthogonal polynomials 
on the unit circle. We show that the formula for $h_n$ from Theorem
\ref{tPer} can be interpreted as point evaluations of such polynomials.

 For $\mu\in M_+(\T)$ we
denote by $\ONP(\mu)$ the family of  polynomials of $z$ on the unit circle 
orthonormal in $L^2(\mu)$. By $\phi_n\in\ONP(\mu)$ we denote the polynomial of degree n.

\begin{thm}\label{tONP} Let $\mu\in M^+(\T)$ and let  $\{\phi_n\}$ be  ONP($\mu$). We have
	$$h^\mu_n=|\phi_n(1)|^2.$$\end{thm}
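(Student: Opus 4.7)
The plan is to reinterpret the quantity $\SS[\Gamma_{n+1}^{-1}]$ from Theorem \ref{tPer} as the value at $(1,1)$ of the reproducing kernel of the space of polynomials of degree at most $n$ in $L^2(\mu)$, and then apply the standard Christoffel--Darboux identity for orthonormal polynomials on the unit circle.

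First, I would observe that $\Gamma_{n+1}(\mu)$ is, up to the overall normalization of $\mu$ on $\T$ built into the definition $\gamma_k=\frac1{2\pi}\int e^{-ikx}d\mu$, the Gram matrix of the monomials $\{1,z,\ldots,z^n\}$ in $L^2(\mu)$. Indeed,
$$\int z^j\bar z^k\,d\mu=\int e^{i(j-k)x}d\mu\ \propto\ \gamma_{k-j}=(\Gamma_{n+1})_{jk},$$
so the Gram matrix equals $\Gamma_{n+1}$ (with the normalization under which the $\phi_n$ of the theorem are orthonormal). Let $\mathcal{P}_n\subset L^2(\mu)$ denote the polynomials of degree at most $n$, with reproducing kernel $K_n(z,w)$. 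Expanding $K_n(\cdot,w)$ in the basis $\{z^k\}$ and using the reproducing property $p(w)=\langle p,K_n(\cdot,w)\rangle$ on each monomial, one obtains
$$K_n(z,w)=\sum_{j,k=0}^n (\Gamma_{n+1}^{-1})_{jk}\,z^j\overline{w^k},$$
and therefore $K_n(1,1)=\SS[\Gamma_{n+1}^{-1}]$.

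Next I would invoke the Christoffel--Darboux formula for $\ONP(\mu)$: if $\{\phi_k\}_{k=0}^n$ is the orthonormal basis of $\mathcal{P}_n$, then the reproducing kernel admits the expansion
$$K_n(z,w)=\sum_{k=0}^n \phi_k(z)\overline{\phi_k(w)},$$
so that $K_n(1,1)=\sum_{k=0}^n |\phi_k(1)|^2$. Combining this with the previous step and with the telescoping formula $h^\mu_n=\SS[\Gamma_{n+1}^{-1}]-\SS[\Gamma_n^{-1}]$ supplied by Theorem \ref{tPer} yields
$$h^\mu_n=K_n(1,1)-K_{n-1}(1,1)=|\phi_n(1)|^2,$$
which is the claim.

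I expect the only real obstacle to be bookkeeping with the $2\pi$ normalization: one must verify that the convention under which $\Gamma_{n+1}$ is the Gram matrix is the same convention under which $\{\phi_n\}=\ONP(\mu)$ is orthonormal. Everything else is a direct consequence of two well-known identities — the matrix expression for a reproducing kernel in terms of a Gram matrix, and the Christoffel--Darboux expansion of that kernel in an orthonormal basis.
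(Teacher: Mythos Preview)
Your proposal is correct and follows essentially the same route as the paper: both arguments identify $\SS[\Gamma_{n+1}^{-1}]$ with $K_n(1,1)$ for the reproducing kernel of $\mathcal P_n(\mu)$, expand $K_n(1,1)=\sum_{j=0}^n|\phi_j(1)|^2$, and telescope via Theorem~\ref{tPer}. The only cosmetic difference is that the paper obtains $\SS[\Gamma_{n+1}^{-1}]=K_n(1,1)$ through the truncated Toeplitz operator framework (the identities $j_n^*j_n=T_\mu$ and $K_w=j_n[T_\mu^{-1}\mathring K_w]$, together with $\mathring k_n=\mathbf 1$), whereas you invoke the equivalent Gram-matrix formula for a reproducing kernel directly; your caveat about the $2\pi$ normalization is exactly the point to check, and once $\Gamma_{n+1}$ is seen as the Gram matrix in the same normalization that makes $\{\phi_n\}$ orthonormal, nothing else is needed.
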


\no In the proof we will use the following notation. We denote
$${\mathcal P}_n(\mu)={\rm span}\{1,\dots, z^n\}\subset L^2(\mu).$$
Then
$$K^w_n\equiv K_n(z,w)=\sum_{j=0}^n\phi_j(z)\overline{\phi_k(w)}$$
is the reprokernel of ${\mathcal P}_n(\mu)$.

In this section $m=m_\T$ stands for the normalized Lebesgue measure on $\T$, $m({\mathbb T})=1$. For $\ONP(m)$ we have
$\phi_n(z)=z^n$ and $$\mathring K (z,w)=\sum_{j=0}^nz^j\bar w^j.$$
If $w=1$, then we'll write $k_n(z)$ for $K_n(z,1)$, in particular
\begin{equation}\mathring k_n(z)=\sum_{j=0}^n z^j.\label{eqbf1}\end{equation}

We consider truncated Toeplitz operators on $\PP_n=\PP_n(m)$:
$$T_\mu= T_\mu^n:~{\mathcal P}_n\to {\mathcal P}_n$$
defined by
$$ (T_\mu p,q)=\int p\bar q~d\mu\quad \forall\ p,  q\in\PP_n.$$
For absolutely continuous  measures $\mu$ with $L^2(m)$-densities $T_\mu:p\mapsto P_n(p \mu)$ where $P_n$ is the orthogonal projection onto ${\mathcal P}_n$ in $L^2(m)$. Note that the  matrix of $T_n$ in the basis $\{z^j\}$ is
$$T_{jk}=(Tz^k,z^j)=\int z^k\bar z^j~d\mu=\gamma_{j-k},$$
which is the matrix $\Gamma^{\scriptscriptstyle T}_{n+1}(\mu)$, the transpose of  $\Gamma_{n+1}(\mu)$ from Section \ref{secMom}.

\begin{lem} If $j_n:\PP_n\to \PP_n(\mu)$ denote the 'identity' embedding operators, then
	$$j_n^*j_n=T_\mu\ {\rm on }\ \PP_n.$$
\end{lem}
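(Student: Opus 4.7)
The plan is to mirror exactly the argument that established $j^*j = L_\mu$ in the Paley-Wiener setting earlier in Section \ref{secTT}. The statement is a direct consequence of unwinding the definitions of $j_n$, its adjoint, and the truncated Toeplitz operator $T_\mu$ via their quadratic forms, so no deep input is required.

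First I would fix arbitrary $p, q \in \PP_n$ and compute the scalar product $(j_n^* j_n p, q)_{\PP_n}$ by moving $j_n^*$ across, giving $(j_n p, j_n q)_{\PP_n(\mu)}$. Since $j_n$ acts as the identity on functions, $j_n p$ and $j_n q$ are literally $p$ and $q$, now viewed as elements of $\PP_n(\mu) \subset L^2(\mu)$; so this inner product is simply $\int p\,\bar q \, d\mu$. Then I would invoke the definition of $T_\mu$ via the quadratic form $(T_\mu p, q)_{\PP_n} = \int p\, \bar q\, d\mu$ to conclude $(j_n^* j_n p, q)_{\PP_n} = (T_\mu p, q)_{\PP_n}$. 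Since $q \in \PP_n$ is arbitrary, $j_n^* j_n p = T_\mu p$ for every $p \in \PP_n$.

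There is essentially no obstacle here: the only subtlety is that the inner product on $\PP_n$ (inherited from $L^2(m)$) differs from the one on $\PP_n(\mu)$ (inherited from $L^2(\mu)$), which is precisely why $j_n^* j_n$ is nontrivial and equals $T_\mu$ rather than the identity. This matches the earlier PW-case verbatim, with $T_\mu$ in place of $L_\mu$, $\PP_n$ in place of $\PW_t$, and $m_\T$ in place of Lebesgue measure on $\R$. No approximation or density argument is needed since $\PP_n$ is finite-dimensional and the integrals $\int p\bar q\, d\mu$ are automatically finite for polynomials $p, q$ and $\mu \in M^+(\T)$.
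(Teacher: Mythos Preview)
Your proposal is correct and essentially identical to the paper's proof: both compute $(j_n^*j_n p,q)=(j_np,j_nq)_\mu=\int p\bar q\,d\mu=(T_\mu p,q)$ for arbitrary $p,q\in\PP_n$.
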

\begin{proof}
	$$(j_n^*j_n p,q)=(j_np,j_nq)_\mu=\int p\bar q~d\mu=(T_\mu p,q).$$\end{proof}

\begin{lem}  $$\forall w\in{\mathbb C}, \quad j_n\left[T^{-1}_\mu\mathring K_w \right]=K_w$$
	In particular,
	$$j_n\left[T^{-1}_\mu\mathring k_n\right]=k_n.$$\end{lem}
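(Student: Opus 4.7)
The plan is to mirror the argument of Theorem \ref{t2} verbatim, substituting the roles $\mathrm{PW}_t \leftrightarrow \mathcal{P}_n$, $\mathcal{B} \leftrightarrow \mathcal{P}_n(\mu)$, and $L \leftrightarrow T_\mu$. The only structural ingredients used in Theorem \ref{t2} are (a) the identity $j^*j = L$ and (b) invertibility of $L$, and both hold in the present polynomial setting: (a) is the previous lemma, and (b) follows from the fact that since $\mu$ has infinite support, the embedding $j_n:\mathcal{P}_n \to \mathcal{P}_n(\mu)$ is injective (a nonzero polynomial of degree $\leq n$ cannot vanish $\mu$-a.e.), so on the finite-dimensional space $\mathcal{P}_n$ the positive operator $T_\mu = j_n^*j_n$ is actually positive definite and hence invertible.

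Granting this, I would argue as follows. Fix $w \in \mathbb{C}$ and let $g = T_\mu^{-1}\mathring{K}_w \in \mathcal{P}_n$; we want to show $j_n g = K_w$. For any $f \in \mathcal{P}_n$, apply the relation $(j_n f, j_n h)_\mu = (T_\mu f, h)$ from the previous lemma with $h = g$ to get
\begin{equation*}
(j_n f, j_n g)_\mu \;=\; (T_\mu f, g) \;=\; (f, T_\mu g) \;=\; (f, \mathring{K}_w),
\end{equation*}
where I used self-adjointness of $T_\mu$ in the middle (which holds because $T_\mu$ has the Hermitian Toeplitz matrix $(\gamma_{j-k})$). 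By the reproducing property in $\mathcal{P}_n$ the last quantity equals $f(w)$, and since $j_n f$ coincides with $f$ as a function on $\mathbb{T}$, this is $(j_n f)(w) = (j_n f, K_w)_\mu$ by the reproducing property in $\mathcal{P}_n(\mu)$. Thus $(j_n f, j_n g)_\mu = (j_n f, K_w)_\mu$ for every $f \in \mathcal{P}_n$, i.e.\ for every element of $\mathcal{P}_n(\mu)$, which forces $j_n g = K_w$.

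The special case $w = 1$ then gives $j_n[T_\mu^{-1}\mathring{k}_n] = k_n$ since, by the definitions at $w=1$, $\mathring{K}_1 = \mathring{k}_n$ and $K_1 = k_n$.

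There is no real obstacle here; the entire argument is a direct transcription of Theorem \ref{t2}. The one point worth flagging explicitly in the write-up is the justification that $T_\mu$ is invertible on $\mathcal{P}_n$, which is where the hypothesis $\mu \in M^+(\mathbb{T})$ (infinite support) enters — without it the embedding $j_n$ could have a kernel for large $n$ and the inverse in the statement would be ill-defined.
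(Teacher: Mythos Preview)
Your proof is correct and follows essentially the same route as the paper's: both use the identity $(j_n p, j_n q)_\mu = (T_\mu p, q)$ together with the reproducing properties of $\mathring K_w$ and $K_w$ to identify $T_\mu j_n^{-1} K_w$ with $\mathring K_w$. Your additional justification of the invertibility of $T_\mu$ via the infinite-support hypothesis is a useful explicit remark that the paper leaves implicit.
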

\begin{proof} Recall that
	$$\forall p, q\in \PP_n,\quad (jp,jq)_\mu=(T_\mu p, q).$$
	Let $p=j^{-1} K_w$. Then we have
	$$( \mathring K_w, q)=\bar q(w)=(K_w,jq)_\mu=(Tj^{-1} K_w,q).$$
\end{proof}

\begin{proof}[Proof of Theorem \ref{tONP}] We will first  show that
$$\Sigma\left(\Gamma^{-1}_{n+1}(\mu)\right)=k_n(1).$$
Indeed, for an $n\times n$ matrix $A$ and the standard basis  $e_0, \dots, e_n$ in $\R^n$, 
$$\Sigma(A)=\sum_j\sum _k (Ae_j, e_k)= (A{\bf 1},  {\bf 1}),\ {\rm where} \ {\bf 1}=\sum_{j=0}^ne_j.$$
Applying this to the matrix $\Gamma^T_{n+1}(\mu)$ of $T_\mu$ with respect to the
basis $1,...,z^n$ and taking into account that ${\bf 1}=\stackrel {\circ}k_n$, see \eqref{eqbf1}, we get
$$\Sigma\left(\Gamma^{-1}_{n+1}(\mu)\right)=\Sigma\left(\left[\Gamma^t_{n+1}(\mu)\right]^{-1}\right)=\left(T_\mu^{-1} \stackrel {\circ}k_n, \stackrel {\circ}k_n\right)=
\left(k_n, \stackrel {\circ}k_n\right)=k_n(1).
$$
\ss\no To finish the proof of the theorem we observe that
$$k_n(1)=K_n(1,1)=\sum_{j=0}^n\left|\phi_j(1)\right|^2,$$
and
$$h_n=\Sigma\left[\Gamma_{n+1}^{-1}\right]-\Sigma\left[\Gamma_{n}^{-1}\right]=k_n(1)-k_{n-1}(1)=|\phi_n(1)|^2.$$
\end{proof}

Theorem \ref{tONP} allows us to describe the change of $h_n$ corresponding to a shift of a periodic spectral measure. For $\eta\in\mathbb T$ we define  $\mu^{(\eta)}(\zeta)=\mu(\eta\zeta)$.

\begin{cor}$$ h_n^{(\eta)}=|\phi_n(\eta)|^2.$$\end{cor}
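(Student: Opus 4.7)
The plan is to reduce the corollary directly to Theorem \ref{tONP} by understanding how the families $\ONP(\mu)$ transform under the rotation $\mu \mapsto \mu^{(\eta)}$. Since $h_n^{(\eta)}$ is built from the moments of $\mu^{(\eta)}$ exactly the way $h_n$ is built from the moments of $\mu$, Theorem \ref{tONP} applied to $\mu^{(\eta)}$ gives
\[
h_n^{(\eta)} = |\phi_n^{(\eta)}(1)|^2,
\]
where $\{\phi_n^{(\eta)}\}$ denotes $\ONP(\mu^{(\eta)})$. So the entire task reduces to expressing $\phi_n^{(\eta)}$ in terms of $\phi_n$.

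The key step is a change-of-variable computation. Because $|\eta|=1$, the map $\zeta \mapsto \eta\zeta$ preserves $m_{\mathbb T}$, and the definition $\mu^{(\eta)}(\zeta)=\mu(\eta\zeta)$ gives
\[
\int_{\mathbb T} f(\zeta)\, d\mu^{(\eta)}(\zeta) = \int_{\mathbb T} f(\bar\eta w)\, d\mu(w)
\]
for every continuous $f$. Setting $\psi_n(\zeta):=\bar\eta^{\,n}\phi_n(\eta\zeta)$, I would check two things: (i) $\psi_n$ is a polynomial of degree $n$ in $\zeta$ with positive leading coefficient, since the leading coefficient of $\phi_n(\eta\zeta)$ is $\eta^n\kappa_n$ with $\kappa_n>0$, and multiplication by $\bar\eta^{\,n}$ renders it equal to $\kappa_n$; and (ii) the system $\{\psi_j\}_{j=0}^n$ is orthonormal in $L^2(\mu^{(\eta)})$, which follows from
\[
\int \psi_n(\zeta)\,\overline{\psi_m(\zeta)}\, d\mu^{(\eta)}(\zeta)
= \int \phi_n(w)\,\overline{\phi_m(w)}\, d\mu(w) = \delta_{nm},
\]
using the change of variable $w=\eta\zeta$ and $|\bar\eta^{\,n}\eta^m|=1$ on the off-diagonal terms actually forces $n=m$ to get nonzero integral after recalling orthonormality of $\phi_n$ in $L^2(\mu)$. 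By uniqueness of orthonormal polynomials with positive leading coefficient, $\phi_n^{(\eta)}=\psi_n$.

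Finally, evaluating at $\zeta=1$ gives $\phi_n^{(\eta)}(1) = \bar\eta^{\,n}\phi_n(\eta)$, hence
\[
h_n^{(\eta)} = |\phi_n^{(\eta)}(1)|^2 = |\bar\eta^{\,n}|^2\,|\phi_n(\eta)|^2 = |\phi_n(\eta)|^2,
\]
which is the claim. The only potential pitfall is the bookkeeping of the normalizing factor $\bar\eta^{\,n}$, needed to match the convention of positive leading coefficient in $\ONP$; once that is settled the argument is a one-line change of variable plus an invocation of Theorem \ref{tONP}.
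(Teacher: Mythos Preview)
Your proof is correct and follows the same approach as the paper: apply Theorem \ref{tONP} to $\mu^{(\eta)}$ after identifying its orthonormal polynomials via the change of variable $w=\eta\zeta$. The paper's one-line proof simply states that the ONP of $\mu^{(\eta)}$ are $\phi_n(\eta\zeta)$, glossing over the unimodular normalization factor $\bar\eta^{\,n}$ that you carefully track; since only $|\phi_n^{(\eta)}(1)|^2$ enters, this factor is irrelevant to the conclusion.
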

\begin{proof} ONP of $\mu^{(\eta)}$ are the polynomials $\phi_n(\eta\zeta)$.\end{proof}

  \bs\section{Examples of spectral problems}\label{secEx}

The goal of this section is to illustrate our formulas with explicit examples and calculations.

\bs\subsection{Constant Hamiltonians}\label{sec0001}

Let us first consider spectral problems, inverse and direct, for systems
with constant Hamiltonians.

\begin{lem} Let the constants $h_1>0, h_2>0 , g\in\R$ satisfy   $h_1h_2=1+g^2$, and
$$\HH=\begin{pmatrix} h_1&g\\g&h_2\end{pmatrix}\in{\rm SL}(2,\mathbb R).$$
Then
$$M(t,z)= \begin{pmatrix} C-gS&-h_2S\\h_1S&C+gS\end{pmatrix},$$where $C:=\cos zt$ and $S:=\sin zt$\end{lem}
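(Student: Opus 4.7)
The plan is to reduce the canonical system to a matrix exponential. Writing \eqref{eq001} for the matricant as $\Omega\dot M = z\HH M$ and using $\Omega^{-1}=-\Omega$, we get $\dot M = zJM$ with $J = -\Omega\HH$, i.e.\
$$J = \begin{pmatrix} 0 & -1\\ 1 & 0\end{pmatrix}\begin{pmatrix} h_1 & g\\ g & h_2\end{pmatrix} = \begin{pmatrix} -g & -h_2\\ h_1 & g\end{pmatrix}.$$
Since $\HH$ is constant, $M(t,z) = \exp(ztJ)$.

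The key observation is that $J^2 = -I$. Indeed, a direct computation gives
$$J^2 = \begin{pmatrix} g^2 - h_1h_2 & 0\\ 0 & g^2 - h_1h_2\end{pmatrix},$$
and the hypothesis $h_1h_2 = 1+g^2$ (equivalently $\det\HH = 1$) yields $g^2 - h_1h_2 = -1$. Therefore $J$ plays the role of the imaginary unit in the $2\times 2$ matrix algebra it generates, and
$$\exp(ztJ) = \cos(zt)\,I + \sin(zt)\,J = C\,I + S\,J.$$
Substituting the explicit form of $J$ produces exactly
$$M(t,z) = \begin{pmatrix} C - gS & -h_2 S\\ h_1 S & C + gS\end{pmatrix},$$
and the initial condition $M(0,z) = I$ is automatic from $C(0)=1$, $S(0)=0$.

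There is essentially no obstacle: the only nontrivial step is checking $J^2=-I$, which is where the det-normalization $h_1h_2 - g^2 = 1$ enters. Once this identity is in hand, the series $\exp(ztJ) = \sum (ztJ)^n/n!$ collapses via $J^{2k}=(-1)^k I$ and $J^{2k+1}=(-1)^k J$ into $\cos(zt)I + \sin(zt)J$, and the result follows by matrix multiplication. Alternatively, one can verify the claim by direct differentiation of the proposed $M(t,z)$: compute $\dot M$, observe $\dot M = zJM$ by inspection of the entries (again using $h_1h_2-g^2=1$), and note $M(0,z)=I$; uniqueness of the solution to the linear ODE then finishes the proof.
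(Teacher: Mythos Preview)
Your proof is correct. The paper's own proof is a one-line verification: it simply checks that the proposed $M(t,z)$ satisfies $M(0,z)=I$ and $\Omega\dot M = z\HH M$, noting that $\det\HH=1$ is needed, and remarks that the formula can be guessed by undetermined coefficients. Your approach is more constructive: by rewriting the system as $\dot M = zJM$ with $J=-\Omega\HH$ and observing that $J^2=-I$ (precisely because $\det\HH=1$), you \emph{derive} $M(t,z)=\exp(ztJ)=\cos(zt)I+\sin(zt)J$ rather than verify a guessed answer. This has the advantage of explaining where the formula comes from, while the paper's verification is shorter once the formula is in hand; you also note the verification route as an alternative, which is exactly what the paper does.
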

\begin{proof} Just verify $M(0)=I$ and $\Omega\dot M=z\HH M$ using $\det(\HH)=1$. To guess the solution use undetermined coefficients.\end{proof}

\begin{cor} $\mu=h_1^{-1}$.\end{cor}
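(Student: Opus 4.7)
My plan is to identify the spectral measure by pushing the system through the Weyl transform and reducing it to the free system, whose spectral measure is already known to be the Lebesgue measure (Section \ref{secPW}).

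First, using the lemma, $A_t(z)=\cos(tz)-g\sin(tz)$ and $C_t(z)=h_1\sin(tz)$. The first key computation is
$$\HH\begin{pmatrix}A_t(z)\\ C_t(z)\end{pmatrix}
=\begin{pmatrix}h_1\cos(tz)-gh_1\sin(tz)+gh_1\sin(tz)\\ g\cos(tz)+(h_1h_2-g^2)\sin(tz)\end{pmatrix}
=\begin{pmatrix}h_1\cos(tz)\\ g\cos(tz)+\sin(tz)\end{pmatrix},$$
where the cancellations in the top entry and the simplification $h_1h_2-g^2=1$ in the bottom entry both use the constraint $\det\HH=1$. Substituting into the definition of the Weyl transform from Section \ref{secW}, I obtain
$$\WW\begin{pmatrix}f_1\\ f_2\end{pmatrix}(z)=\frac{1}{\sqrt\pi}\int_0^\infty\bigl[(h_1f_1+gf_2)\cos(\bar zt)+f_2\sin(\bar zt)\bigr]\,dt.$$

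Next I would make the linear fiberwise change of variables $\tilde f_1:=h_1f_1+gf_2$, $\tilde f_2:=f_2$. In the new variables the formula above coincides with the Weyl transform of $(\tilde f_1,\tilde f_2)$ for the free system $\HH\equiv I$, which the paper has already identified with the (suitably normalized) Fourier transform and shown to be an isometry onto $L^2(m)$. Completing the square in the $L^2(\HH)$-quadratic form, and using once more that $h_2-g^2/h_1=(h_1h_2-g^2)/h_1=1/h_1$, gives
$$\bigl\|(f_1,f_2)\bigr\|_{L^2(\HH,[0,\infty))}^2=\int_0^\infty\!\bigl(h_1f_1^2+2gf_1f_2+h_2f_2^2\bigr)\,dt=\frac{1}{h_1}\int_0^\infty\!\bigl(\tilde f_1^{\,2}+\tilde f_2^{\,2}\bigr)\,dt.$$

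Combining the two computations, for $F=\WW(f_1,f_2)$ I get $\|F\|_{L^2(h_1^{-1}m)}^2=h_1^{-1}\|F\|_{L^2(m)}^2=\|(f_1,f_2)\|_{L^2(\HH,[0,\infty))}^2$, so $\WW$ extends to an isometry $L^2(\HH,[0,\infty))\to L^2(h_1^{-1}m)$ on each interval $[0,t]$. Since $\mathrm{trace}\,\HH$ is a positive constant, condition \eqref{eq002} holds and the spectral measure is unique; hence $\mu=h_1^{-1}m$, which under the paper's convention of identifying $cm$ with $c$ is exactly $\mu=h_1^{-1}$. There is no real obstacle here: the only non-cosmetic step is spotting that the linear change of fiber variables is precisely what diagonalizes both the Weyl transform and the quadratic form simultaneously, and this is forced by $\det\HH=1$.
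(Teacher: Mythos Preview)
Your proof is correct and complete. It differs from the paper's argument in a useful way, so let me briefly compare.

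The paper's proof works on the HB side: it factors $\begin{pmatrix}A_t\\ C_t\end{pmatrix}$ as an $SL(2,\R)$ matrix times $\begin{pmatrix}\sqrt{h_1}\cos tz\\ \sqrt{h_1}\sin tz\end{pmatrix}$ and then invokes Theorem~2.2 of \cite{Rem} (an $SL(2,\R)$ change of $(A,C)$ does not alter the dB chain) to conclude that $F_t=\sqrt{h_1}e^{-itz}$ generates the same chain, whence $\mu=1/|F_t|^2=h_1^{-1}$. Your proof works on the Weyl side: you make the dual linear change in the \emph{fiber} variables, $\tilde f_1=h_1f_1+gf_2,\ \tilde f_2=f_2$, and observe that this simultaneously turns the Weyl transform into the free one and the $L^2(\HH)$ quadratic form into $h_1^{-1}$ times the free form. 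The two arguments are really the same $SL(2,\R)$ change of coordinates seen from opposite ends of the pairing $\langle \HH f, X_{\bar z}\rangle$; what your version buys is that it is entirely self-contained within the paper (it uses only the identification of the free Weyl transform with the Fourier transform from Section~\ref{secPW}) and does not appeal to the external dB-space result. The paper's version, on the other hand, makes the structural reason transparent and generalizes immediately to any constant $SL(2,\R)$ conjugation of a known system.
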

\begin{proof} 
	Notice that the solution can be rewritten as
	$$\begin{pmatrix} u_z(t)\\v_z(t)\end{pmatrix}
	\begin{pmatrix} C-gS\\h_1S\end{pmatrix}=\begin{pmatrix} 1&-g\\0&h_1\end{pmatrix}\begin{pmatrix} C\\S\end{pmatrix}=\begin{pmatrix} \frac1{\sqrt{h_1}}&-\frac g{\sqrt{h_1}}\\0&\sqrt{h_1}\end{pmatrix}\begin{pmatrix} \sqrt{h_1}C\\\sqrt{h_1}S\end{pmatrix}$$
Since the determinant of the last $2\times2$ matrix is 1, by Theorem 2.2 from
\cite{Rem}
$F_t(z)=\sqrt{h_1}C-i\sqrt{h_1}S$ generates the same   chain
of dB spaces as $E_t(z)=u_z(t)-iv_z(t)$. But
$$\BB(F_t)=\BB\left(\sqrt{h_1}e^{-izt}\right),$$
and $h_1^{-1}=1/|F_t|^2$ is the spectral measure. \end{proof}

 Alternatively, to prove the last corollary we can apply the general algorithm described in Section 
 \ref{secHmu} to the even measure  $\mu=a, \ a=h_1^{-1}$. We have $\Gamma(\mu)=a I$, so $\Gamma_n^{-1}=\frac1aI_n$ and
$S\left( \Gamma_n^{-1}\right)=\frac na$. Then $h_n=\frac {n+1}a-\frac na=\frac 1a$.

To practice calculating $h_2$ using the dual measure, notice that for $\mu=a$, $i\KK\mu+ib=a+ib$ and  $\PP\ti\mu_b=\ti a:=\frac a{a^2+b^2}$. We have
$$h=h^\mu=\frac1a,\quad g^\mu=0,\quad \ti h=h^{\ti\mu}=\frac1{\ti a},\quad g^{\ti\mu}=0.$$
Hamiltonians corresponding to the pair of measures $\mu$, $\ti\mu_b$ are given by the constant matrix
$$H=\begin{pmatrix}h&kh\\kh&\ti h\end{pmatrix},\ k\in\R.$$

\bs\subsection{Spectral measure $\mu=1+\cos x$}\label{sec1+c}
 Let us solve the inverse spectral problem for this particular  measure. 
 Recall that by Theorem \ref{tPer}, for a $2\pi$-periodic even measure  $\mu=1+\cos x$ the Hamiltonian
 is diagonal and constant on 
 $$ \left(0,\frac12\right),\; \left(\frac12,1\right),\; \left(1,\frac32\right),\;\dots$$

To find the values of the Hamiltonian on these intervals using Theorem \ref{tPer} notice that
$$\Gamma(\mu)=\begin{pmatrix}1&\frac12&0&\dots\\\frac12&1&\frac12&\dots
\\ 0 &\frac 12 & 1& \dots\\  \vdots & \vdots & \vdots &\ddots \end{pmatrix},$$
 $$\Gamma_1^{-1}=(1),\ \  \Gamma_2^{-1}=\frac43\begin{pmatrix}1&-\frac12\\-\frac12&1\end{pmatrix}, ~\dots$$ 
and $$\SS_1=1,\ \SS_2=\frac43,~ \dots$$ for $\SS_n=\SS[\Gamma_n^{-1}]$.

  We conclude that the values of $h_0,h_1,h_2...$ are 
 $$1,\  \frac13,\   \frac23,\   \frac25,\   \frac 35,\   \frac 37,\   \frac 47,\  \frac 49,...$$
 $$...,h_{2n}=\frac{n+1}{2n+1},
 h_{2n+1}=\frac{n+1}{2n+3},...$$

 Since  the  Hamiltonian $\HH$ is diagonal and $\det\HH=1$,
 the the values for the second diagonal term can be calculated as $1/h_n$.

From \eqref{eqEven} we can find the even dual measure $\tilde\mu$:
$${\mathcal P}\mu=\Re(1+e^{iz}),\quad {\mathcal P}\tilde\mu=\Re\frac1{1+e^{iz}}
=\frac12+\frac12\Re\frac{1-S}{1+S},$$
so
$$\tilde\mu=\frac12+\pi\sum_{-\infty}^\infty\delta_{\pi+2\pi k},\quad \Gamma(\tilde\mu)=\begin{pmatrix}1&-\frac12&\frac12&-\frac12& \dots\\-\frac12&1&-\frac12&\frac12&\dots\\
\frac 12&  -\frac 12 &1 &-\frac 12& \dots\\ 
\vdots &\vdots & \vdots & \vdots &\ddots\end{pmatrix}$$
 Computation gives $\tilde h_0=1$, $\tilde h_1=3$, $\tilde h_2=\frac32, \tilde h_3=\frac52$, etc., as
 expected from the above calculation for $h_k$.
 
 Note that one can find $h_n$ for $\mu=1+\cos x$ and many other periodic measures using tables of orthogonal polynomials on the unit circle available in the literature,
 together with Theorem \ref{tONP}. To find $\ti h_n$ one can apply Theorem
 \ref{tONP} to the dual measure, see Section \ref{secDM}. Further examples
 based on that approach will be given below, see Sections \ref{secEP} and \ref{secED}.

\subsection{Four steps by hand} Suppose $\mu$ is even (then the moments $\gamma_k$ are real). As before, denote $\SS_n=\SS[\Gamma_n^{-1}]$.
\begin{lem} $$\SS_1=\frac 1{\gamma_0},\quad \SS_2=\frac 2{\gamma_0+\gamma_1},\quad \SS_3=\frac{3\gamma_0-4\gamma_1+\gamma_2}{(\gamma_0+\gamma_2)\gamma_0-2\gamma_1^2},$$$$ \SS_4=\frac{2(2\gamma_0-\gamma_1-2\gamma_2+\gamma_3)}{(\gamma_0+\gamma_3)(\gamma_0+\gamma_1)-(\gamma_1+\gamma_2)^2}.$$ \end{lem}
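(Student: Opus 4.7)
The plan is to compute $\SS_n = \mathbf{1}^{\scriptscriptstyle T}\Gamma_n^{-1}\mathbf{1}$ by solving the linear system $\Gamma_n x = \mathbf{1}$ and summing the entries of $x$. The key observation is that since $\mu$ is even, all moments $\gamma_k$ are real, so $\Gamma_n$ is a real symmetric Toeplitz matrix. Such a matrix is both symmetric and persymmetric (symmetric about the antidiagonal), i.e.\ it satisfies $J\Gamma_n J = \Gamma_n$ where $J$ is the flip matrix. Since the right-hand side $\mathbf{1}$ is invariant under $J$, so is the solution $x$; that is, $x_i = x_{n+1-i}$. This reduces the effective size of the system from $n$ to $\lceil n/2\rceil$.

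The cases $n=1$ and $n=2$ are immediate. For $n=1$, $\gamma_0 x_1 = 1$ gives $\SS_1 = 1/\gamma_0$. For $n=2$, using $x_1 = x_2$, the first row reads $(\gamma_0+\gamma_1)x_1 = 1$, so $\SS_2 = 2x_1 = 2/(\gamma_0+\gamma_1)$.

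For $n=3$, I would set $x_1=x_3=a$, $x_2=b$. The first and second rows of $\Gamma_3 x = \mathbf{1}$ become
$$(\gamma_0+\gamma_2)a + \gamma_1 b = 1, \qquad 2\gamma_1 a + \gamma_0 b = 1.$$
Solving by Cramer's rule in this $2\times 2$ system, the common denominator is $\gamma_0(\gamma_0+\gamma_2)-2\gamma_1^2$, and $\SS_3 = 2a+b$ works out after combining numerators to $3\gamma_0 - 4\gamma_1 + \gamma_2$ over that denominator. Similarly, for $n=4$, setting $x_1=x_4=a$, $x_2=x_3=b$, the first two rows give
$$(\gamma_0+\gamma_3)a + (\gamma_1+\gamma_2)b = 1, \qquad (\gamma_1+\gamma_2)a + (\gamma_0+\gamma_1)b = 1.$$
The determinant of this $2\times 2$ system is $(\gamma_0+\gamma_3)(\gamma_0+\gamma_1)-(\gamma_1+\gamma_2)^2$, and Cramer yields $a = (\gamma_0-\gamma_2)/\det$, $b = (\gamma_0-\gamma_1-\gamma_2+\gamma_3)/\det$, so $\SS_4 = 2(a+b) = 2(2\gamma_0-\gamma_1-2\gamma_2+\gamma_3)/\det$, matching the stated formula.

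There is no real obstacle here; the whole content of the lemma is the persymmetry reduction, after which each case is a $2\times 2$ linear solve done by Cramer's rule. The only mildly fiddly point is recombining the numerators for $\SS_3$ so that the $\gamma_1^2$ and $\gamma_1\gamma_0$ terms cancel in the numerator (they survive only in the denominator); this is where one must be careful, but it is routine.
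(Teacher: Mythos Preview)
Your proof is correct and follows essentially the same route as the paper's: solve $\Gamma_n x=\mathbf{1}$, use the persymmetry $x_i=x_{n+1-i}$ (which the paper invokes without naming), and reduce to a $2\times2$ system for $n=3,4$. The only cosmetic difference is that the paper rewrites the $n=3$ system in the variables $(2x_1,x_2)$ so that $\SS_3=\SS[M^{-1}]$ for the resulting $2\times2$ matrix $M$, whereas you apply Cramer's rule directly; your closing remark about cancelling $\gamma_1^2$ and $\gamma_0\gamma_1$ terms in the $\SS_3$ numerator is unnecessary, since the Cramer numerators for $a$ and $b$ are already linear in the $\gamma_i$.
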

\begin{proof} The first formula is obvious. The second follows from
$$\Gamma_2^{-1}=\frac1{\gamma_0^2-\gamma_1^2}\begin{pmatrix}\gamma_0&-\gamma_1\\-\gamma_1&\gamma_0\end{pmatrix}$$
Third formula: let $\vec{x}$ be defined by
$$ \begin{pmatrix}\gamma_0&\gamma_1&\gamma_2\\\gamma_1&\gamma_0&\gamma_1\\\gamma_2&\gamma_1&\gamma_0\end{pmatrix}
\begin{pmatrix}x_1\\x_2\\x_3\end{pmatrix}=\begin{pmatrix}1\\1\\1\end{pmatrix}$$
By symmetry we have $x_1=x_3$, and
$$\SS_3=(\Gamma_3^{-1}\vec{1},\vec{1})=x_1+x_2+x_3=2x_1+x_2.$$
The variables $2x_1$ and $x_2$ satisfy the system
$$\frac{\gamma_0+\gamma_2}2(2x_1)+\gamma_1x_2=1,\qquad \gamma_1(2x_1)+\gamma_0x_2=1$$
(the first two equations in the $3\times 3$ system above), and we have
$$\SS_3=2x_1+x_2=\SS(M^{-1}),$$
where $M$ is the 2$\times$2 matrix of the last system. The derivation of the forth formula is as simple -- we use the symmetry $x_1=x_4$ and $x_2=x_3$, so again we only need to invert a 2$\times$2 matrix.  \end{proof}

\begin{example} If $\gamma_0=1, \;\gamma_1=\frac12,\;\gamma_{\ge2}=0$, then we have
$$\SS_1=1, \quad \SS_2=\frac43,\quad \SS_3=2,\quad \SS_4=\frac{12}5,$$
and if $\gamma_0=1, \;\gamma_1=-\frac12,\;\gamma_2=\frac12$, then
$$\SS_1=1, \quad \SS_2=4,\quad \SS_3=\frac{11}2.$$
\end{example}

\bs\subsection{Finding dual measures}\label{secDM} Recall that if $\phi$ is a Schur function, then we define Clark's measures $\sigma_\alpha^\phi$ on $\hat\R$ for $\alpha\in\T$ via the equation
$$\PP\sigma_\alpha=\Re\frac{\alpha+\phi}{\alpha-\phi},$$
see Section \ref{secClark}.

 Let us find all dual measures  for $\mu=1+\cos x$ using the formulas from
 Section \ref{secCD}. 
 It is clear the the corresponding Schur function $\phi$ is a periodic 
 non-constant function and 
 therefore $\phi-\a\not\in L^2(\R)$ for any $\a$. Hence the dual measures
 have no masses at $\infty$.
 
 For $S(z):=e^{iz}$ we have 
 $$\PP\mu=\Im\KK\mu=\Re(1+S) \ {\rm and}\ \KK\mu=i(1+S).$$ 
 Hence by \eqref{eqDM},
  \begin{equation}\PP\tilde{\mu}_b=\Re\frac1{1+S-ib}.\label{eq400}\end{equation}

\ss\no  Case $b=0$ (the only choice such that $\tilde{\mu}$ is even):
$$\PP\ti\mu=\frac12+\frac12\Re\frac{1-S}{1+S}=\PP\left[\frac12+\frac12\sigma^S_{-1}\right],$$
and by \eqref{eqClarkpp},
$$\ti\mu=\frac12+\pi\sum_{n\in\Z}\delta_{(2n+1)\pi}.$$
Let us also find the moments of $\tilde{\mu}$:
$$\gamma_k=\frac1{4\pi}\int_0^{2\pi}e^{-ikx}dx+\frac12\sum_{n=-\infty}^\infty\int_0^{2\pi}e^{-ikx}\delta_{(2n+1)\pi}(x)=I+II.$$
In $II$, only the term with $n=0$ has an atom  in $(0,2\pi)$, so
$$II=\frac12 e^{-ik\pi}=\frac12(-1)^k.$$
Of course, $I=\frac12$ if $k=0$ and $I=0$ otherwise.

  Case $b\ne0$. From \eqref{eq400} one can conclude that
  $\PP\ti\mu_b$ is bounded near $\R$ and therefore $\ti\mu$ is absolutely
  continuous. Hence, on $\R$,
$$\ti\mu=\Re\frac1{1+S-ib}=\Re\frac1{1+\cos x+i(\sin x -b)}=\frac{1+\cos x}{2+2\cos x-2b\sin x+b^2}.$$
(Note that formally setting $b=0$ in the last formula we get the wrong answer because $\ti\mu_0$ is not a.c.) To find the moments, we may use geometric progression,
$$\ti\mu=\Re\left[\frac1{1-ib}~
\frac1{1+\frac{e^{ix}}{1-ib}}\right]=\Re\left[\frac1{1-ib}-\frac{e^{ix}}{(1-ib)^2}+\frac{e^{2ix}}{(1-ib)^3}-\dots\right],$$
and we get
$$\gamma_0=\frac1{1+b^2},\quad \gamma_1=-\frac12\frac1{(1-ib)^2}, \quad \gamma_2=\frac12\frac1{(1-ib)^3}, \quad \dots$$
(and $\gamma_{-k}=\overline{\gamma_k}$). This follows from the representations like
$$\Re\frac{e^{ix}}{(1-ib)^2}=\frac12\frac{e^{ix}}{(1-ib)^2}+\frac12\frac{e^{-ix}}{(1+ib)^2}.$$
Note that the formal limit $b=0$ gives the right moments.

Using the moments and Theorem \ref{tPer} one can recover the Hamiltonians
(in the case $b=0$ it was done in Section \ref{sec1+c}).

Generalizing the example from Section \ref{sec1+c},  let us consider $\mu=1+a\cos$ where $|a|<1$. Then $\PP\mu=\Re(1+aS)$, and
$$\PP\ti\mu=\Re\frac1{1+aS}$$
for a symmetric case ($b=0$). Thus
$$\ti\mu=\frac{1+a\cos x}{1+2a\cos x+a^2},$$
and
$$\Gamma(\ti\mu)=\left \{1, ~ -\frac a2, ~\frac{a^2}2,~\frac{a^3}2, ~\dots\right\}.$$
(Here we use the notation $M=\{a_1,a_2,...\}$ for a symmetric Toeplitz matrix $M$ with
the first row $a_1,a_2,...$.)

This follows from the representation
$\ti\mu=\Re[1-aS+a^2S^2-\dots]$. Note that we get the right moments in the limit $a=1$.  Of course,
$$\Gamma(\mu)=\left \{1, ~ \frac a2, ~0,~0, ~\dots\right\}.$$


\bs\subsection{The Poisson measure}\label{secEP} Fix $a\in\D$ and consider the
$\mu\in M^+(\T),\ \mu=P_a$, where $P_a$ is the Poisson kernel on $\T$,
$$P_a(\xi)=\frac{1-|a|^2}{|\xi-a|^2}.$$
Recall that a polynomial is called monic if its main coefficient is equal to 1.

\begin{lem}[Szego, Bernstein] The monic orthogonal polynomials 
	on $\T$ with respect to $P_a$ are
	$$\Phi_0(z)=1, \qquad \Phi_n(z)=z^n-az^{n-1},\quad (n\ge1),$$
	are orthogonal with respect to $\mu$. Their norms are
	$$\|\Phi_0\|_\mu=1,\qquad \|\Phi_n\|^2_\mu=1-|a|^2.$$\end{lem}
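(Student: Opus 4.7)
The plan is to exploit the factorization $\Phi_n(z)=z^{n-1}(z-a)$ for $n\ge1$ together with the explicit form $P_a(\xi)=(1-|a|^2)/|\xi-a|^2$. On the unit circle, the denominator $|\xi-a|^2$ of $P_a$ will cancel against factors $(z-a)$ and $(\bar z-\bar a)$ coming from the polynomials, which reduces every relevant integral either to a monomial on $\mathbb T$ or to the boundary integral of a function holomorphic in $\mathbb D$.

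For the norms, the identity $\|\Phi_0\|_\mu^2=\int_{\mathbb T}P_a\,dm_{\mathbb T}=1$ is simply the normalization of the Poisson kernel. For $n\ge1$, on $\mathbb T$ one has $|\Phi_n(\xi)|^2=|\xi|^{2(n-1)}|\xi-a|^2=|\xi-a|^2$, so $|\Phi_n|^2 P_a\equiv 1-|a|^2$ pointwise, and integrating gives $\|\Phi_n\|_\mu^2=1-|a|^2$.

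For orthogonality I would split into two cases. If $n>k\ge1$, then on $\mathbb T$ both $(z-a)$ and $(\bar z-\bar a)$ cancel $|\xi-a|^2$ in $P_a$, giving $\Phi_n\overline{\Phi_k}P_a=(1-|a|^2)\,z^{n-1}\bar z^{k-1}=(1-|a|^2)\,z^{n-k}$; since $n-k\ge1$, the integral against $dm_{\mathbb T}$ vanishes. If $n\ge1$ and $k=0$, only one cancellation is available: after multiplying numerator and denominator by $z$ and using $z\bar z=1$, one finds $\Phi_n\overline{\Phi_0}P_a=(1-|a|^2)\,z^n/(1-\bar a z)$ on $\mathbb T$. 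Since $|a|<1$, this is the boundary restriction of a function holomorphic in a neighborhood of $\overline{\mathbb D}$ that vanishes at the origin for $n\ge1$, so by the mean value property its integral against $dm_{\mathbb T}$ equals its value at $0$, which is zero.

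There is no real obstacle: the key observation is that the zero $z=a$ of $\Phi_n$ is placed to cancel precisely the singularity at $\xi=a$ in the denominator of $P_a$. The only mild asymmetry is the mixed case $k=0$, where only one cancellation occurs and one must invoke the mean value property for holomorphic functions in place of a direct monomial integral on $\mathbb T$.
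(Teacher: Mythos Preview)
Your argument is correct and is in fact a bit slicker than the paper's. The paper proceeds by direct expansion and the Poisson reproducing property $\int_{\T} z^m P_a\,dm_\T=a^m$ for $m\ge0$: it multiplies out $(z^n-az^{n-1})(z^{-k}-\bar a z^{1-k})$ and checks that the four resulting terms $a^{n-k}+|a|^2a^{n-k}-a\,a^{n-k-1}-\bar a\,a^{n-k+1}$ cancel; similarly for the norm it expands $|z-a|^2=1+|a|^2-a\bar z-\bar a z$ and integrates termwise. You instead exploit the factorization $\Phi_n(z)=z^{n-1}(z-a)$ together with $|\xi-a|^2=(\xi-a)(\bar\xi-\bar a)$ on $\T$, so that the denominator of $P_a$ is cancelled \emph{before} any expansion, leaving either a pure monomial $(1-|a|^2)z^{n-k}$ or, in the mixed case $k=0$, the boundary value of $(1-|a|^2)z^n/(1-\bar a z)$, which is disposed of by the mean value property. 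Your route makes transparent \emph{why} the zero at $a$ is the right choice (it matches the singularity of $P_a$), whereas the paper's computation verifies it after the fact; both are equally short.
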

\begin{proof} If $n>k\ge 1$, then
	\begin{align*} (\Phi_n, \Phi_k)_\mu&=\int_{\T}(z^n-az^{n-1})(z^{-k}-\bar az^{1-k})P_a(z)\\&=a^{n-k}+|a|^2a^{n-k}-a a^{n-k-1}-\bar aa^{n-k+1}=0.\end{align*}
	Also, $$(\Phi_n,\Phi_0)_\mu=\int_\T(z^n-az^{n-1})P_a=a^n-aa^{n-1}=0.$$
	Finally,
	$$\|\Phi_n\|^2=\int_\T(1+|a|^2-a\bar z-\bar a z)P_a=1+|a|^2-|a|^2-|a|^2.$$
\end{proof}

For ONP we have
$$\phi_0(z)=1,\qquad \phi_n(z)=\frac{z^{n-1}(z-a)}{\sqrt{1-|a|^2}}\quad (n\ge1).
$$

\begin{cor} $$h_0=1,\qquad h_n=\frac{|1-a|^2}{1-|a|^2} \quad (n\ge1).$$\end{cor}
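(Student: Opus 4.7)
The plan is to invoke Theorem~\ref{tONP}, which for any $\mu \in M^+(\mathbb{T})$ identifies $h_n^\mu$ with $|\phi_n(1)|^2$, where $\{\phi_n\}$ is the family of polynomials orthonormal in $L^2(\mu)$. The entire corollary is therefore reduced to computing $\phi_n(1)$ for $\mu = P_a$.

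The orthonormal polynomials for $P_a$ are immediate from the Szeg\H{o}--Bernstein lemma established just above: dividing each monic $\Phi_n$ by its $L^2(P_a)$-norm gives $\phi_0 \equiv 1$ and $\phi_n(z) = z^{n-1}(z - a)/\sqrt{1 - |a|^2}$ for $n \geq 1$.

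Substituting $z = 1$ and taking absolute squares yields $|\phi_0(1)|^2 = 1$ and $|\phi_n(1)|^2 = |1 - a|^2/(1 - |a|^2)$ for $n \geq 1$, which is exactly the stated formula. There is no substantive obstacle here, since both Theorem~\ref{tONP} and the explicit form of the $\phi_n$ are already in hand, and the corollary reduces to a single substitution. The only mild observation worth recording is that the value of $h_n$ stabilizes for all $n \geq 1$, in contrast with the anomalous value $h_0 = 1$; this reflects the fact that $\phi_n$ for $n \geq 1$ all carry the common factor $(z - a)/\sqrt{1 - |a|^2}$, so the powers $z^{n-1}$ contribute the same value $1$ when evaluated at $z = 1$.
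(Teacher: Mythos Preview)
Your proposal is correct and matches the paper's intended argument exactly: the corollary is placed immediately after the explicit ONP formula $\phi_0=1$, $\phi_n(z)=z^{n-1}(z-a)/\sqrt{1-|a|^2}$, and is meant to follow by plugging $z=1$ into Theorem~\ref{tONP}. There is nothing to add.
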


{\bf Remarks.} (a) The periodic measure $\mu^\R(x)=\mu^\T(e^{ix})$ is even if $\mu^\T(z)=\mu^\T(\bar z)$.
This is the case when $a$ is real; then
$$h_n=\frac{1-a}{1+a} \qquad (n\ge1).$$

\ss\no (b) Shift of the spectral measure, $\mu(\xi)\mapsto \mu_\eta(\xi)=\mu(\eta\xi),\ \eta\in\T$,  results in the Hamiltonian with
$$h_n^{(\eta)}=\frac{|\eta-a|^2}{1-|a|^2}\qquad (n\ge1).$$
If $a$ is real, then
$$h^{(-1)}_n=\frac{1+a}{1-a}=h_n^{-1}$$
which suggests that the measures $P_a$ and $P_{-a}$ are dual. This relation 
can also be established directly for all $a\in\T$:

\begin{lem} For any $a\in\D$, the measures $P_a^\R$ and $P_{-a}^\R$ are dual.
\end{lem}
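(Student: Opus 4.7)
Plan. I will exhibit an explicit Schur function realising the duality, namely
$$\phi(z)=-\bar a\,e^{iz},\qquad z\in\C_+,$$
which is bona fide Schur since $|\phi(z)|=|a|\,e^{-\Im z}\le|a|<1$ on $\C_+$. A direct algebraic manipulation gives
$$\frac{1-\phi(z)}{1+\phi(z)}=\frac{1+\bar a\,e^{iz}}{1-\bar a\,e^{iz}}\qquad\text{and}\qquad\frac{1+\phi(z)}{1-\phi(z)}=\frac{1-\bar a\,e^{iz}}{1+\bar a\,e^{iz}},$$
so in view of \eqref{eq010} the lemma reduces to proving the two Poisson identities
$$\PP P_a^\R(z)=\Re\frac{1+\bar a\,e^{iz}}{1-\bar a\,e^{iz}}\qquad\text{and}\qquad \PP P_{-a}^\R(z)=\Re\frac{1-\bar a\,e^{iz}}{1+\bar a\,e^{iz}};$$
the second follows from the first applied to $-a$, so only the first needs attention.

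To prove it, I will show that the right-hand side is the unique bounded harmonic extension of $P_a^\R$ from $\R$ to $\C_+$. The function $z\mapsto\frac{1+\bar a\,e^{iz}}{1-\bar a\,e^{iz}}$ is holomorphic on $\C_+$, hence its real part is harmonic; boundedness follows from $|1-\bar a\,e^{iz}|\ge 1-|a|>0$ on $\C_+$. Its boundary value at $x\in\R$ is
$$\Re\frac{1+\bar a\,e^{ix}}{1-\bar a\,e^{ix}}=\frac{1-|a|^2}{|1-\bar a\,e^{ix}|^2}=\frac{1-|a|^2}{|e^{ix}-a|^2}=P_a(e^{ix})=P_a^\R(x),$$
where I used $|1-\bar a\,e^{ix}|=|e^{-ix}-\bar a|=|e^{ix}-a|$. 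Since $P_a^\R$ is continuous and bounded on $\R$, its Poisson extension $\PP P_a^\R$ is also bounded harmonic on $\C_+$ with the same boundary data, and uniqueness of the bounded harmonic extension (a standard consequence of Phragmén--Lindelöf applied to the difference) forces the two to coincide.

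Finally, $\phi(z)\to 0$ non-tangentially at infinity, so $\phi(\infty)\ne\pm 1$; according to Section \ref{secClark}, neither $\sigma_{-1}^\phi$ nor $\sigma_{1}^\phi$ carries a point mass at $\infty$ (the $py$ term in \eqref{eqClark} vanishes). Combined with the two Poisson identities just established, this forces $P_a^\R=\sigma_{-1}^\phi$ and $P_{-a}^\R=\sigma_{1}^\phi$, which is precisely the definition of AC-duality.

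There is no genuine obstacle once the candidate $\phi$ is in hand; the whole argument rests on recognising that the desired closed form for $\PP P_a^\R$ presents itself on $\C_+$ as the real part of a manifestly holomorphic, bounded function with the correct boundary data. The only creative step is guessing $\phi=-\bar a\,e^{iz}$, and it is naturally motivated by the observation that the map $z\mapsto e^{iz}$ sends $\C_+$ to $\D$ and pulls back the disc Poisson kernel at $a$ to the half-plane Poisson extension of the periodic measure $P_a^\R$.
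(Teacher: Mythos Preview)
Your proof is correct and follows essentially the same route as the paper's: both identify $\PP P_a^\R$ with $\Re\frac{1+\bar a S}{1-\bar a S}$ (where $S=e^{iz}$), so that the dual measure is read off from $\Re\frac{1-\bar a S}{1+\bar a S}=P_{-a}^\R$ on $\R$. You make the underlying Schur function $\phi=-\bar a\,e^{iz}$ explicit and are more careful about justifying the Poisson identity via uniqueness of the bounded harmonic extension and about ruling out a point mass at infinity, whereas the paper's proof leaves these points implicit.
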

\begin{proof}  The harmonic function
	$$\Re\frac{1+\bar a S}{1-\bar a S}=\frac{1-|a|^2|S|^2}{|1-\bar aS|^2}$$
	is positive in the upper halfplane and its boundary values are $P_a^\R(x)$. One of the dual  measures can then be found from
	$$\Re\frac{1-\bar a S}{1+\bar a S}=P_{-a}\qquad ({\rm on}\; \R).$$
\end{proof}


\begin{cor} We have $$
	h_n\tilde h_n=\frac{|1-a^2|^2}{(1-|a|^2)^2}\qquad (n\ge1)$$
	and therefore
	$$g_n^2=h_n\tilde h_n-1=\frac {4(\Im a)^2}{(1-|a|^2)^2}.$$
\end{cor}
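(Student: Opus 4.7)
The statement follows almost immediately by combining the previous corollary with the duality relation $\widetilde{P_a}=P_{-a}$ established in the preceding lemma, plus the det-normalization of $\HH$.

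My plan is as follows. First, I would invoke the previous corollary, which gave
$$h_n=\frac{|1-a|^2}{1-|a|^2}\qquad(n\ge 1)$$
for the measure $\mu=P_a$. Since the lemma says $\widetilde{P_a}=P_{-a}$, the very same formula applied with $a$ replaced by $-a$ yields
$$\tilde h_n=\frac{|1+a|^2}{1-|a|^2}\qquad(n\ge 1).$$
Multiplying these two and using the factorization $|1-a|^2|1+a|^2=|(1-a)(1+a)|^2=|1-a^2|^2$ gives the first assertion.

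For the second assertion I would use that the Hamiltonian $\HH$ provided by Theorem~\ref{t8} is det-normalized, so its off-diagonal entry $g_n:=g^\mu(t)-c\,h^\mu(t)$ (which is locally constant on each interval $(\tfrac n2,\tfrac{n+1}2)$, just as $h_n$ and $\tilde h_n$ are, by the results in Sections~\ref{secHmu}--\ref{secGmu1}) satisfies $h_n\tilde h_n-g_n^2=\det\HH=1$. Hence $g_n^2=h_n\tilde h_n-1$, which by the first part equals
$$\frac{|1-a^2|^2-(1-|a|^2)^2}{(1-|a|^2)^2}.$$

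The only remaining step is a short algebraic simplification of the numerator. Expanding, $|1-a^2|^2=1-a^2-\bar a^2+|a|^4$ and $(1-|a|^2)^2=1-2|a|^2+|a|^4$, so the numerator equals $2|a|^2-(a^2+\bar a^2)=2\bigl(|a|^2-\Re(a^2)\bigr)$. Writing $a=\Re a+i\,\Im a$ gives $|a|^2-\Re(a^2)=2(\Im a)^2$, and therefore $g_n^2=4(\Im a)^2/(1-|a|^2)^2$, as claimed. There is no real obstacle here; the only point requiring care is that the off-diagonal term extracted from Theorem~\ref{t8} (and thus the $g_n$ appearing in the corollary) is determined only up to the additive ambiguity $c\,h^\mu$ coming from Remark~\ref{remHk}, but that ambiguity is harmless because the identity $h_n\tilde h_n-g_n^2=1$ is exactly a statement about any det-normalized representative of the equivalence class of Hamiltonians with spectral pair $(\mu,\tilde\mu)$.
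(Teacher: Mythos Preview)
Your proof is correct and is exactly the argument the paper intends: the corollary is stated without proof, as an immediate consequence of the previous corollary ($h_n=|1-a|^2/(1-|a|^2)$), the duality lemma $\widetilde{P_a}=P_{-a}$, and the det-normalization $h_n\tilde h_n-g_n^2=1$ from Theorem~\ref{t8}. Your handling of the additive ambiguity in the off-diagonal term is also the right observation.
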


To illustrate our formulas further
let us use the above calculations and obtain the measure $P_a$ in the case of real $a$ solving
the direct spectral problem. In this case the Hamiltonian is diagonal and, as was shown above,  equal
to $I$ on $\left(0,\frac 12\right)$ and to
$$\begin{pmatrix}
	h & 0 \\ 0 &h^{-1}
\end{pmatrix}
$$
on the rest of $\R_+$, where 
$$h=\frac{1-a}{1+a}.$$

Using the formulas for constant Hamiltonians from Section  \ref{sec0001}, we obtain that
the matrizant is equal to 
$$\begin{pmatrix}
	\cos (z/2) & -\sin(z/2) \\
	\sin(z/2) & \cos(z/2)
\end{pmatrix}$$
on $\left(0,\frac 12\right)$ and to 
$$\begin{pmatrix}
	\cos zt &-h^{-1}\sin zt \\
	h \sin zt& \cos zt
\end{pmatrix}\begin{pmatrix}
	\cos (z/2) & -\sin(z/2) \\
	\sin(z/2) & \cos(z/2)
\end{pmatrix}$$
at "time" $\frac12+t$.

The corresponding (normalized) HB function is
$$E=A-iC$$
where
$$A=\cos zt\cos(z/2)-h^{-1}\sin zt\sin(z/2), \quad C=h\sin zt\cos(z/2)-\cos zt\sin(z/2).$$
The function $E$ has the same dB space as the function
$$ E^*=\sqrt hA-\frac i{\sqrt h}C.$$
A small miracle happens in the following computation:
$$| E^*|^2=h|A|^2+\frac1h|C|^2=h\cos^2\frac z2+\frac1h\sin^2\frac z2,$$
with the final expression showing no dependence on $t$. It follows that the spectral measure is
$$\mu(x)=\frac 1{|E^*(x)|^2}=\frac1{h\cos^2\frac x2+\frac1h\sin^2\frac x2}.$$
This is equal to
$$P_a(x)=\frac{1-a^2}{(\cos x-a)^2+\sin ^2x}.$$

\bs\subsection{Delta measure plus a constant}\label{secED} 
Let now $\gamma\in (0,1)$ be a parameter, and consider $\mu\in M^+(\T)$ defined as
$$\mu^\T=(1-\gamma)+2\pi\gamma\delta_1.$$
As before, we identify $\mu$ with a periodic measure on $\R$,
$$\mu= (1-\gamma) + 2\pi\gamma\sum_n \d_{2\pi n}.$$

\begin{lem}[\cite{Simon}] The monic orthogonal  polynomials on $\T$ with respect to $\mu$
	are 
	$$\Phi_0(z)=1,\qquad \Phi_n(z)=z^n-\alpha_{n-1}(1+\dots+z^{n-1}),\quad (n\ge1),$$
	where
	$$\alpha_n=\frac\gamma{1+n\gamma}.$$
 The norms are
	$$\|\Phi_n\|^2=1-n\alpha_{n-1}\gamma,\quad (n\ge0).$$
	\end{lem}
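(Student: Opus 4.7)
The plan is to verify directly that the ansatz $\Phi_n(z)=z^n-\alpha_{n-1}(1+z+\cdots+z^{n-1})$ is orthogonal in $L^2(\mu)$ to every lower-degree monomial $1,z,\dots,z^{n-1}$; since the monic orthogonal polynomial of each degree is uniquely determined by this orthogonality, identifying $\alpha_{n-1}$ from the orthogonality conditions proves the formula. The squared norm then follows from one further inner product computation.

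The first step is to record the trigonometric moments of $\mu$. Because $\mu^{\T}=(1-\gamma)m_{\T}+\gamma\delta_1$ is (in the paper's normalization) a probability measure on $\T$ with point mass $\gamma$ at $z=1$, one gets the strikingly simple ``flat'' moment sequence
$$\gamma_0=1,\qquad \gamma_k=\gamma \quad\text{for every } k\neq 0.$$
This is the structural feature that drives everything: the geometric combination $1+z+\cdots+z^{n-1}$ in the ansatz is precisely what is needed to match such an almost-constant moment sequence.

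The second step is the orthogonality check. For $0\le k\le n-1$,
$$\langle \Phi_n,z^k\rangle_\mu=\gamma_{k-n}-\alpha_{n-1}\sum_{j=0}^{n-1}\gamma_{k-j}.$$
Since $k-n\le-1$, the first term equals $\gamma$. In the sum the index $k-j$ takes the value $0$ exactly once (at $j=k$, contributing $\gamma_0=1$) and is nonzero for the remaining $n-1$ values of $j$ (each contributing $\gamma$), so the sum equals $1+(n-1)\gamma$. The orthogonality condition $\langle\Phi_n,z^k\rangle_\mu=0$ thus reduces to
$$\gamma=\alpha_{n-1}\bigl(1+(n-1)\gamma\bigr),$$
forcing $\alpha_{n-1}=\gamma/(1+(n-1)\gamma)$ as claimed. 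Observe that all $n$ orthogonality conditions (one for each $k\in\{0,\dots,n-1\}$) collapse to this \emph{same} scalar equation, which is why a single free parameter $\alpha_{n-1}$ suffices to make $\Phi_n$ orthogonal to the entire $n$-dimensional subspace.

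For the norms, orthogonality combined with monicity gives $\|\Phi_n\|_\mu^2=\langle\Phi_n,z^n\rangle_\mu$. Repeating the computation with $k$ replaced by $n$, every index $n-j\in\{1,\dots,n\}$ is nonzero, so each $\gamma_{n-j}=\gamma$, and
$$\|\Phi_n\|_\mu^2=\gamma_0-\alpha_{n-1}\cdot n\gamma=1-n\gamma\alpha_{n-1},$$
which is the stated formula (the case $n=0$ being handled by the convention $0\cdot\alpha_{-1}=0$, which gives $\|\Phi_0\|_\mu^2=1=\gamma_0$). There is really no substantive obstacle here: the whole argument is bookkeeping with indices, and the only mildly delicate point is keeping the normalization of $\mu$ straight so that $\gamma_0=1$; once the flat moment sequence is written down, the ansatz and the claimed $\alpha_{n-1}$ essentially verify themselves.
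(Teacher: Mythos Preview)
Your proof is correct. The paper does not supply its own proof of this lemma; it simply cites Simon's monograph. Your direct verification via the moment sequence $\gamma_0=1$, $\gamma_k=\gamma$ for $k\neq 0$ is the standard argument and is complete as written.
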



Since the measure is even, the corresponding Hamiltonian is diagonal
with the diagonal entries on $\left(n,n+\frac 12\right)$ given by Theorem \ref{tONP}:

\begin{cor} $$h_0=1,\qquad h_n=\frac{|\Phi_n(1)|^2}{\|\Phi_n\|^2}=\frac{(1-n\alpha_{n-1})^2}{1-n\alpha_{n-1}\gamma},$$
	$$\ti h_0=1,\qquad \ti h_n=\frac{1}{h_n}=\frac{1-n\alpha_{n-1}\gamma}{(1-n\alpha_{n-1})^2}.$$
\end{cor}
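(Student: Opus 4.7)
The plan is to apply Theorem \ref{tONP} directly, feeding it the orthogonal polynomials provided by Simon's lemma. Since Theorem \ref{tONP} is stated for the orthonormal polynomials $\phi_n$, I first pass from the monic $\Phi_n$ to $\phi_n = \Phi_n/\|\Phi_n\|$, which gives
\[
h_n \;=\; |\phi_n(1)|^2 \;=\; \frac{|\Phi_n(1)|^2}{\|\Phi_n\|^2}.
\]
Then I would evaluate $\Phi_n$ at $z=1$: the formula $\Phi_n(z) = z^n - \alpha_{n-1}(1 + z + \cdots + z^{n-1})$ collapses at $z=1$ to $\Phi_n(1) = 1 - n\alpha_{n-1}$, and combining this with the norm $\|\Phi_n\|^2 = 1 - n\alpha_{n-1}\gamma$ from the preceding lemma yields the first displayed identity. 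The $n=0$ case is immediate: $\Phi_0 \equiv 1$ and $\|\Phi_0\|^2 = \gamma_0 = (1-\gamma) + \gamma = 1$, so $h_0 = 1$.

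For $\tilde h_n$, I would argue via symmetry rather than via the dual measure. The measure $\mu$ is even on $\R$ because both the Lebesgue piece $(1-\gamma)\,m$ and the array of point masses at $2\pi\Z$ are symmetric about the origin. Theorem \ref{t3} then guarantees that the det-normalized Hamiltonian realizing $\mu$ is diagonal, so on each interval of constancy the entries multiply to $1$. The Corollary following Theorem \ref{t8} (equivalently, direct inspection) gives $h_n \tilde h_n = 1$, hence $\tilde h_n = 1/h_n$, which is the second displayed equation.

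There is no real obstacle here: all of the content is packaged into Theorem \ref{tONP} and the preceding lemma on orthogonal polynomials. The only sanity check worth performing is that the trigonometric-moment normalization $\gamma_k = \frac{1}{2\pi}\int_0^{2\pi} e^{-ikx}\,d\mu$ from Section \ref{secMom} is consistent with the conventions in Simon's lemma; a one-line computation of $\gamma_0 = 1$ confirms this. Thus the corollary is a direct specialization of our general periodic formula to the explicitly diagonalizable measure $(1-\gamma)m + 2\pi\gamma\sum_n \delta_{2\pi n}$.
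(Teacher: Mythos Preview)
Your proposal is correct and follows the paper's own approach: the paper's proof is the one-line remark preceding the corollary, noting that the measure is even (hence the Hamiltonian is diagonal by Theorem \ref{t3}) and that $h_n$ is given by Theorem \ref{tONP}. Your explicit evaluation $\Phi_n(1)=1-n\alpha_{n-1}$ and the check $\gamma_0=1$ simply fill in the arithmetic that the paper leaves to the reader.
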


%
%
%
%


\bs\subsection{Spectral measure $\mu=\alpha+\beta\pi\delta_0,\ \a>0,\b\geq 0$}\label{secdelta0}
Let us now consider this example of a non-periodic even spectral measure. Using
the formulas
$$\hat\delta=\frac1{\sqrt{2\pi}}\cdot 1,\qquad \hat1=\sqrt{2\pi}\cdot\delta,$$
we obtain
$$\hat\mu=\alpha\sqrt{2\pi}\cdot\delta+\frac{\beta\pi}{\sqrt{2\pi}}\cdot 1.$$
Next, we use Theorem \ref{tTT} to find the Fourier transform $\psi_t$ of
the reproducing kernel.
Since $\psi\ast 1=\int\psi$, Theorem \ref{tTT} gives
$$\forall x\in(-t,t),\qquad \alpha\sqrt{2\pi}\psi_t(x)+\frac{\beta\pi}{\sqrt{2\pi}}\int_{-t}^t\psi_t=1.$$
Thus
$$\psi_t=c(t)1_{(-t,t)},\qquad \int_{-t}^t\psi_t=2tc(t),$$
and
$$\alpha\sqrt{2\pi}c(t)+\frac{\beta\pi}{\sqrt{2\pi}}2tc(t)=1,$$
so
$$c(t)=\frac1{\alpha\sqrt{2\pi}+\frac{\beta\pi}{\sqrt{2\pi}}2t}=\frac{\sqrt{2\pi}}{2\pi\alpha+2\pi t\beta}$$
and
$$k_t(0)=\frac1{\sqrt{2\pi}}\int\psi_t=\frac{2tc(t)}{\sqrt{2\pi}}=\frac1\pi\frac t{\alpha+t\beta}.$$
It follows that
$$h^\mu(t)=\pi\frac d{dt} k_t(0)=\frac d{dt}\frac t{\alpha+t\beta}=\frac\alpha{(\alpha+t\beta)^2}$$
and that  $$H(t)=\begin{pmatrix}  \frac\alpha{(\alpha+t\beta)^2}&0\\0&\frac{(\alpha+t\beta)^2} \alpha     \end{pmatrix}$$
is a  unique diagonal Hamiltonian with spectral measure $\mu$.


$$$$

\begin{remarks}
	
\begin{itemize}
	$$$$

\item Note that in the limiting case $\alpha\to 0$ (and $\beta=1$):
$$\mu(x)\to \pi\delta(x),\qquad h^\mu(t)\to \delta(t).$$
Indeed,
$$\int_0^\infty\frac\alpha{(\alpha+t)^2}~dt=-\frac\alpha{\alpha+t}\Big|_{t=0}^\infty=1$$
This shows how using our methods developed for PW-measures one can solve the  inverse problem for a non-PW measure $\mu=\pi\delta$

\item The dual spectral measure of $H$ is
$$\tilde\mu(x)=\frac{\alpha x^2}{\alpha^2 x^2+\beta^2}.$$

\begin{proof}
$$\PP\mu(z)=\alpha+\beta\frac y{x^2+y^2}=\Re\left[\alpha+\frac{i\beta}z\right]$$
We have
$$\frac1{\alpha+\frac{i\beta}z}=\frac z{\alpha z+i\beta};$$
 on $\R$ the real part is
 $\frac{\alpha x^2}{\alpha^2 x^2+\beta^2}$.\end{proof}

 \item It is a good exercise to show directly that if $\ti\mu=x^2/(1+x^2)$, then $h^{\ti\mu}=(1+t)^2$.  Note that 
 $$\FF\left(\frac1{1+x^2}\right)(\xi)=\sqrt{\frac \pi 2}e^{-|\xi|},$$
 and we need to solve equations like
 $$\psi+ce^{-|\xi|}\ast\psi=1$$
 (this convolution is called the Beurling transform of $\psi$ in \cite{HJ}). Hint:
 $$\int\psi_t(y)e^{-|x-y|}dy=\int_{-t}^x+\int_x^t.$$
 
 \end{itemize}
 
\end{remarks}

 \bs\subsection{Adding an eigenvalue at the origin} Here is a generalization of the last example studied with different methods by H. Winkler in \cite{W}.
 
 \begin{thm} Let $\mu$ be a PW-sampling measure. For $r\geq 0$, let  $\mu_r=\mu+r\pi\delta$. Then
 $$h^{\mu_r}(t)=\frac{h^\mu(t)}{\left(1+r\int_0^th^\mu\right)^2}$$\end{thm}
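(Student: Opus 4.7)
The plan is to reduce everything to a rank-one perturbation of the truncated Toeplitz operator $L_{\mu,t}$ and then apply the Sherman--Morrison formula. First, I would observe that $\mu_r = \mu + r\pi\delta$ is still PW-sampling (it satisfies condition (i) and inherits (ii) from $\mu$, by the remark in Section~\ref{secExPWs}), so all of the machinery from Sections~\ref{secTT}--\ref{sech11} applies to $\mu_r$, and in particular $h^{\mu_r}(t) = \pi \frac{d}{dt}k^{\mu_r}_t(0)$ with $k^{\mu_r}_t = j_t[L^{-1}_{\mu_r,t}\mathring k_t]$.

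Next, using the quadratic form definition \eqref{eqTTdef} of truncated Toeplitz operators and the fact that point evaluation at $0$ in $\PW_t$ is $(f,\mathring k_t)_{\PW_t}$, I would write, for $f,g\in\PW_t$,
$$(L_{\mu_r,t}f,g) = (L_{\mu,t}f,g) + r\pi\,f(0)\overline{g(0)} = (L_{\mu,t}f,g) + r\pi(f,\mathring k_t)(\mathring k_t,g),$$
which identifies
$$L_{\mu_r,t} = L_{\mu,t} + r\pi\,\mathring k_t\otimes \mathring k_t.$$
Applying Sherman--Morrison to this self-adjoint rank-one update gives
$$L^{-1}_{\mu_r,t}\mathring k_t \;=\; \frac{L^{-1}_{\mu,t}\mathring k_t}{1 + r\pi\bigl(L^{-1}_{\mu,t}\mathring k_t,\mathring k_t\bigr)_{\PW_t}}.$$
Since $(L^{-1}_{\mu,t}\mathring k_t,\mathring k_t)_{\PW_t} = (L^{-1}_{\mu,t}\mathring k_t)(0) = k^\mu_t(0)$, passing through $j_t$ gives the pointwise identity
$$k^{\mu_r}_t(z) = \frac{k^\mu_t(z)}{1 + r\pi\,k^\mu_t(0)}, \qquad z\in\C,$$
and in particular $k^{\mu_r}_t(0) = k^\mu_t(0)/(1+r\pi k^\mu_t(0))$.

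Finally, integrating the definition $h^\mu(t) = \pi\frac{d}{dt}k^\mu_t(0)$ from $0$ to $t$ (with $k^\mu_0(0)=0$, since $\PW_0(\mu)=\{0\}$) yields $\pi k^\mu_t(0) = \int_0^t h^\mu$, so
$$k^{\mu_r}_t(0) = \frac{1}{\pi}\cdot\frac{\int_0^t h^\mu}{1 + r\int_0^t h^\mu}.$$
Differentiating in $t$ with the quotient rule, the two cross-terms in the numerator cancel and leave $h^\mu(t)/\pi$, so
$$h^{\mu_r}(t) = \pi\frac{d}{dt}k^{\mu_r}_t(0) = \frac{h^\mu(t)}{(1+r\int_0^t h^\mu)^2},$$
as claimed. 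The only place that requires care is the rank-one perturbation step: since $\mathring k_t\in\PW_t$ and $L_{\mu,t}$ is bounded and invertible on $\PW_t$ (by PW-sampling of $\mu$), Sherman--Morrison applies verbatim, and the non-negativity of the denominator $1+r\pi k^\mu_t(0)$ is automatic because $r\geq 0$ and $k^\mu_t(0)=\|k^\mu_t\|^2\geq 0$.
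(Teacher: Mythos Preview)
Your proof is correct and takes a genuinely different route from the paper's. The paper works on the Fourier side via Theorem~\ref{tTT}: it writes the convolution equations $\psi_t\ast\hat\mu=1$ and $f_t\ast\hat\mu_r=1$ on $(-t,t)$, observes that $\hat\mu_r=\hat\mu+r\sqrt{\pi/2}$ forces $f_t\ast\hat\mu=1-c(t)$ with $c(t)=r\sqrt{\pi/2}\int f_t$, deduces $f_t=(1-c(t))\psi_t$, solves the resulting scalar equation for $c(t)$, and then differentiates. You instead stay on the operator level and recognize $L_{\mu_r,t}=L_{\mu,t}+r\pi\,\mathring k_t\otimes\mathring k_t$ as a rank-one self-adjoint perturbation, to which Sherman--Morrison applies directly. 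The two arguments are doing the same thing underneath --- the paper's relation $f_t=(1-c(t))\psi_t$ is exactly the Fourier shadow of your identity $L_{\mu_r,t}^{-1}\mathring k_t = L_{\mu,t}^{-1}\mathring k_t/(1+r\pi k_t^\mu(0))$ --- but your version is more transparent: it names the structural reason (rank one) and invokes a standard formula, while the paper's version rediscovers that formula by hand in Fourier coordinates. The paper's approach has the minor advantage of staying within the convolution-equation framework used throughout Section~\ref{secFt} and the subsequent examples, so it requires no new tools beyond Theorem~\ref{tTT}.
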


 \begin{example} $\mu=\alpha m$, $h^\mu=\alpha^{-1}$, and
 $$h^{\mu_r}(t)=\frac{\alpha^{-1}}{\left(1+\alpha^{-1}rt\right)^2}=\frac{\alpha}{(\alpha+rt)^2}$$
 \end{example}

 \bs\begin{proof} Fix $r>0$. Let $\psi_t$ and $f_t$ be solutions of the equations from Theorem \ref{tTT},
 $$\psi_t\ast\hat\mu_r=1,\qquad f_t\ast \hat\mu_r=1\qquad{\rm on}\quad (-t,t),$$
 so that $$h^\mu=\sqrt{\frac\pi2}\frac d{dt}\int\psi_t$$
 and
 \begin{equation}
 h^{\mu_r}=\sqrt{\frac\pi2}\frac d{dt}\int f_t.\label{eq500}\end{equation}
 From $f\ast \hat\mu_r=1$ we find
 $$f_t\ast\hat\mu+r\sqrt{\frac\pi2}\int f_t=1,$$
 or
 $$f_t\ast\hat\mu=1-c(t),\qquad c(t)=r\sqrt{\frac\pi2}\int f_t.$$
 It follows that
 $$f_t=(1-c(t))\psi_t,$$
 where $c(t)$ satisfies the equation
 $$c(t)=r\sqrt{\frac\pi2}(1-c(t))\int \psi_t.$$
 
 Hence, $c(t)=a/(1+a)$ where
 $a(t)=r\sqrt{\pi/2}\int\psi_t$ and
 $$ \frac d{dt} a(t)=rh^\mu(t),\ a(0)=0.$$
From \eqref{eq500} we have 
$$h^{\mu_r}(t)=\frac d{dt} \frac{c(t)}r,$$
which yields the statement.\end{proof}

\begin{remark} 
	Note that $\mu_{x+y}$ can be obtained  as  $\mu_{x+y}=\mu_x+y\d_0$. For $h_x=h^{\mu_x}$ the last theorem yields the equation
	$$h_{x+y}(t)=\frac{h_x(t)}{\left(1+y\int_0^th_x\right)^2}$$
 with the initial condition $h_0=h^\mu$.

 $\triangle$
\end{remark}

 \bs\subsection{Spectral measure $\mu=\alpha+\beta\pi\delta_\lambda$} 
 Here $\alpha>0$, $\beta>0$, and $\lambda\in \R$ are  parameters. 
 Unlike Section \ref{secdelta0}, if $\l\neq 0$, then the measure is not even. 
 
 (i) We want to find
 $$h^\mu=\sqrt{\frac\pi2}\frac d{dt}\int\psi_t, $$
 where
 $$\hat\mu\ast \psi_t=1\qquad{\rm on}\quad (-t,t).$$
 We will use the notation $e_\lambda(x)=e^{i\lambda x}$. 
 We have
 $$\hat\mu=\alpha\sqrt{2\pi}\delta_0+\frac{\pi\beta}{\sqrt{2\pi}}e_{-\lambda}$$
 Note that
 $$(e_{-\lambda}\ast\psi)(x)=e_{-\lambda}(x)\int e_{\lambda}\psi,
$$
 so
 $$\psi_t=\left[\frac1{\alpha\sqrt{2\pi}}-\frac\beta{2\alpha}c(t)e_{-\lambda}\right]\cdot1_{(-t,t)},$$
 where
 $$c(t)=\int e_\lambda\psi_t.$$
 Also,
 $$\int_{-t}^te_\lambda=\frac{2\sin\lambda t}\lambda.$$
 From the last 3 equations we get
 $$c(t)=\sqrt{\frac2\pi}\frac1{\alpha+\beta t}\frac{\sin \lambda t}\lambda,$$
 and
 $$\sqrt{\frac\pi2}\int\psi_t=\frac t\alpha-\frac\beta\alpha\frac1{\alpha+\beta t}\left(\frac{\sin \lambda t}\lambda\right)^2.$$
Hence,
 $$h^\mu(t)=\frac d{dt}\left[\frac t\alpha-\frac\beta\alpha\frac1{\alpha+\beta t}\left(\frac{\sin \lambda t}\lambda\right)^2\right].$$
 E.g., if $\alpha=\beta=1$, then
 $$h^\mu(t)=\sin^2\lambda t+\left[\cos \lambda t-\frac{\sin \lambda t}{\lambda(1+t)}\right]^2$$

(ii) We now turn to the computation of $g^\mu$ for
$$\mu=\alpha+\pi\beta\delta_\lambda$$
To compute $g^\mu
$ we need
$$l_t(0)=\frac 1{2\pi i}\int_{-t}^t\psi_t*(\sigma \hat\mu)=\frac{\beta}{2i\sqrt{ 2\pi}}\int_{-t}^t\psi_t*(\sigma e_{-\lambda})=
$$
$$=\frac{\beta}{4i\pi\alpha}\int_{-t}^t 1_{[-t,t]}*(\sigma e_{-\lambda})-\frac{\beta^2 c(t)}{4\alpha i\sqrt{2\pi}}\int_{-t}^t(1_{[-t,t]}e_{-\lambda})*(\sigma e_{-\lambda}).
$$

The following formula can be established via direct calculations.

\begin{lem}
	$$\int_{-t}^t(1_{[-t,t]}e_{\lambda})*(\sigma e_{-\mu})=\frac 4{i(\lambda-\mu)}\left[\si_t(\lambda)-\cos(\lambda-\mu)t~\si_t(\mu)\right].$$	
\end{lem}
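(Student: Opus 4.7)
The plan is a direct computation. First I would unfold the convolution to write the left-hand side as the double integral
\[
\int_{-t}^t\!\!\int_{-t}^t e^{i\lambda y}\,e^{-i\mu(x-y)}\,\sigma(x-y)\,dx\,dy,
\]
and swap the order by Fubini (the integrand is bounded on the compact square $[-t,t]^2$). Holding $y\in[-t,t]$ fixed, I would split the inner $x$-integral at $x=y$ to dispatch the sign, reducing it to two elementary exponential integrals whose difference collapses, after a short calculation, to
\[
\int_{-t}^t e^{-i\mu(x-y)}\,\sigma(x-y)\,dx=\frac{2}{i\mu}\bigl(1-\cos(\mu t)\,e^{i\mu y}\bigr).
\]

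Substituting this back in, the outer $y$-integral involves only $\int_{-t}^t e^{i\lambda y}\,dy=2\si_t(\lambda)$ and $\int_{-t}^t e^{i(\lambda+\mu)y}\,dy=2\si_t(\lambda+\mu)$, and assembles to
\[
\frac{4}{i\mu}\bigl[\si_t(\lambda)-\cos(\mu t)\,\si_t(\lambda+\mu)\bigr].
\]

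To match the symmetric form on the right-hand side of the lemma, I would finish with the sine subtraction formula applied to $\lambda t=(\lambda+\mu)t-\mu t$,
\[
\sin(\lambda t)=\cos(\mu t)\sin((\lambda+\mu)t)-\cos((\lambda+\mu)t)\sin(\mu t).
\]
Dividing through by $\mu(\lambda+\mu)$, this identity rearranges into
\[
\frac{1}{\mu}\bigl[\si_t(\lambda)-\cos(\mu t)\,\si_t(\lambda+\mu)\bigr]=\frac{1}{\lambda+\mu}\bigl[\si_t(\lambda)-\cos((\lambda+\mu)t)\,\si_t(\mu)\bigr],
\]
and, combined with the previous display, this yields the claimed identity (the denominator reading as the difference of the two frequencies $\lambda$ and $-\mu$ that label the factors on the left, namely $\lambda-(-\mu)=\lambda+\mu$). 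No step is delicate; the only bookkeeping to watch is the sign tracking in the inner integral at $x=y$ and the trigonometric rearrangement that converts the asymmetric answer into the symmetric form of the statement.
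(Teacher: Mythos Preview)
Your computation is correct and is exactly the ``direct calculations'' the paper alludes to (the paper gives no proof beyond that phrase). Your inner integral
\[
\int_{-t}^t e^{-i\mu(x-y)}\sigma(x-y)\,dx=\frac{2}{i\mu}\bigl(1-\cos(\mu t)e^{i\mu y}\bigr)
\]
is right, and the trigonometric rearrangement at the end is a clean way to symmetrize the answer.

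You are also right to flag the discrepancy: the identity that actually holds is
\[
\int_{-t}^t(1_{[-t,t]}e_{\lambda})*(\sigma e_{-\mu})=\frac{4}{i(\lambda+\mu)}\bigl[\si_t(\lambda)-\cos((\lambda+\mu)t)\,\si_t(\mu)\bigr],
\]
with $\lambda+\mu$ rather than $\lambda-\mu$. This is not just cosmetics; the two expressions are genuinely different for generic $\lambda,\mu$. One can confirm that the $+$ version is the intended one by tracing the paper's own application of the lemma immediately below it: plugging $\lambda\to 0,\ \mu\to\lambda$ into your formula gives $\frac{4}{i\lambda}[t-\cos(\lambda t)\si_t(\lambda)]$, and plugging $\lambda\to-\lambda,\ \mu\to\lambda$ gives $\frac{4}{i\lambda}[\si_t(\lambda)-t\cos(\lambda t)]$, which are precisely the bracketed quantities appearing in the paper's subsequent expression for $l_t(0)$ with the correct signs. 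The stated $\lambda-\mu$ version would produce the wrong sign in the first application and the wrong bracket in the second. So the lemma as printed contains a typo, and your reading of $\lambda-\mu$ as $\lambda-(-\mu)$ is the right fix.
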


\ss\no Using the lemma to continue the calculation of $l_t(0)$,
$$l_t(0)=-\frac{\beta}{\alpha\lambda\pi}\left[t-\cos\lambda t\ \si_t(\lambda)\right]-\frac{\beta^2 c(t)}{\alpha\lambda \sqrt{2\pi}}\left[t\cos\lambda t-\si_t(\lambda)\right]=
$$$$
-\frac{\beta}{\alpha\lambda\pi}\left[t-\cos\lambda t\ \si_t(\lambda)\right]-\sqrt{\frac 2\pi}\frac{ \si_t(\lambda)}{\alpha + \beta t}\frac{\beta^2 }{\alpha\lambda \sqrt{2\pi}}\left[t\cos\lambda t-\si_t(\lambda)\right]=
$$$$
-\frac \beta{\alpha\pi\lambda}\left[t- \frac{\cos\lambda t \sin\lambda t}\lambda + \frac {\beta
	t}{\alpha + \beta t} \frac{\cos\lambda t \sin\lambda t}\lambda-        \frac \beta{\alpha + \beta t}\frac{\sin^2\lambda t}{\lambda^2}\right]=
$$$$-\frac \beta{\alpha\pi\lambda}\left[t- \frac {\alpha}{\alpha + \beta t} \frac{\cos\lambda t \sin\lambda t}\lambda-        \frac \beta{\alpha + \beta t}\frac{\sin^2\lambda t}{\lambda^2}\right]
$$
Now,
$$g^\mu(t)=\frac d{dt}l_t(0)=$$$$
-\frac \beta{\alpha\pi\lambda}\left[1  - \frac {\alpha}{\alpha + \beta t} \cos 2\lambda t+
\frac {\alpha\beta}{(\alpha + \beta t)^2}\frac{\cos\lambda t \sin\lambda t}\lambda 
-\frac \beta{\alpha + \beta t}\frac{\sin 2\lambda t}{\lambda} + \frac {\beta^2}{(\alpha + \beta t)^2}\frac{\sin^2\lambda t}{\lambda^2}\right]=
$$

$$
-\frac \beta{\alpha\pi\lambda}\left[1  - \frac {\alpha}{\alpha + \beta t} \cos 2\lambda t-
\frac {(\alpha\beta+2\beta^2 t)}{(\alpha + \beta t)^2}\frac{\sin 2\lambda t}{2\lambda}
+ \frac {\beta^2}{(\alpha + \beta t)^2}\frac{\sin^2\lambda t}{\lambda^2}\right].
$$

Let us simplify the above formula  in the particular case $\alpha=\beta=\lambda=1$:

$$
g^\mu(t)=-\frac 1{\pi}\left[1  - \frac {\cos 2 t}{1 +  t} -
\frac {(1+2 t)}{(1 +  t)^2}\frac{\sin 2 t}{2}
+ \frac {\sin^2 t}{(1 +  t)^2}\right]=
$$$$
\frac{(1+t)2\cos 2t+(1+2t)\sin 2t-(1-\cos 2t) }{2\pi(1+t)^2}-\frac 1\pi=
$$$$
\frac{2t(\cos 2t+\sin 2t)+3\cos2t+\sin 2t-1 }{2\pi(1+t)^2}-\frac 1\pi=
$$$$
\frac{(2t+1)(\cos 2t+\sin 2t)+2\cos2 t-1 }{2\pi(1+t)^2}-\frac 1\pi
$$

%

\subsection{Adding multiple solitons}\label{sec00011} Let us solve the inverse problem for
$$\mu=\alpha +\sum_n \pi\beta_n \delta_{\lambda_n}.$$
For its Fourier transform
we have
$$\hat\mu=\sqrt{2\pi}\alpha\delta_0+\sum_n\frac{\pi\beta_n}{\sqrt{2\pi}}e_{-\lambda_n},
$$
where $e_{s}(t)=e^{ist}$.

As before, let $\psi=\psi_t$ be the Fourier transform of the reproducing kernel $k_t(\cdot)=K_t(0,\cdot)\in \BB_t$. 
Theorem \ref{tTT} then gives
$$\hat\mu*\psi=1\textrm{ on }(-t,t)$$
which can be rewritten as
\begin{equation}\sqrt{2\pi}\alpha\psi(x) + \sum_n c_n(t)\frac {\pi\beta_n}{\sqrt{2\pi}}e_{-\lambda_n}(x)=1\label{e01}\end{equation}
where
$$c_n(t)=\int_\R e_{\lambda_n}\psi =\sqrt{2\pi}\hat\psi(-\lambda_n).$$
Solving \eqref{e01} for $\psi$ we get
\begin{equation}
	\psi(x)=\frac 1{\sqrt{2\pi}\alpha}-\sum_n\frac{\beta_n}{2\alpha}c_n(t)e_{-\lambda_n}(x).\label{e02}\end{equation}
Let us recall the notation for the sinc  function:
$$\si_t(x)=\frac{\sin (tx)}{ x}.$$
The reprokernel of $PW_t$ is $\mathring K_t(w,z)=\frac 1\pi\si_t(z-\bar w)$.

Direct calculations give

\begin{lem}
	$$	\int_{-t}^t e_\lambda=\frac{2\sin \lambda t}\lambda=2\si_t(\lambda)$$
	$$\int_{-t}^t e_\lambda e_{-\mu}=\frac{2\sin (\lambda-\mu) t}{\lambda-\mu}=2\si_t(\lambda-\mu).$$
	
\end{lem}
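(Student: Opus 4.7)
The plan is entirely computational: both identities are direct antiderivative evaluations of complex exponentials, and no machinery beyond the definition $e_s(x)=e^{isx}$ and the elementary identity $2i\sin\theta=e^{i\theta}-e^{-i\theta}$ is required.

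First I would prove the first identity. Writing out the definition and integrating, we get
\begin{equation*}
\int_{-t}^{t}e_\lambda(x)\,dx=\int_{-t}^{t}e^{i\lambda x}\,dx=\frac{e^{i\lambda x}}{i\lambda}\Big|_{x=-t}^{x=t}=\frac{e^{i\lambda t}-e^{-i\lambda t}}{i\lambda}=\frac{2\sin\lambda t}{\lambda}=2\si_t(\lambda),
\end{equation*}
which handles the $\lambda\neq 0$ case. The $\lambda=0$ case is even more trivial (the integral equals $2t$) and matches the right-hand side by the standard removable-singularity convention $\si_t(0)=t$.

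Next I would deduce the second identity by noting that $e_\lambda(x)e_{-\mu}(x)=e^{i(\lambda-\mu)x}=e_{\lambda-\mu}(x)$, so that
\begin{equation*}
\int_{-t}^{t}e_\lambda(x)e_{-\mu}(x)\,dx=\int_{-t}^{t}e_{\lambda-\mu}(x)\,dx,
\end{equation*}
and then applying the first identity with $\lambda$ replaced by $\lambda-\mu$. This yields $\frac{2\sin(\lambda-\mu)t}{\lambda-\mu}=2\si_t(\lambda-\mu)$, again with the same removable-singularity convention on the diagonal $\lambda=\mu$.

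There is no genuine obstacle here; the only thing worth being careful about is the convention at the removable singularity, which is already built into the definition of $\si_t$ used in the paper. The lemma is stated for use immediately afterward (to evaluate the inner products arising from the soliton computation around equations \eqref{e01}--\eqref{e02}), so a one-line direct verification suffices.
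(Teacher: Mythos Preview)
Your proposal is correct and matches the paper's approach: the paper simply states ``Direct calculations give'' before the lemma, so the intended argument is exactly the one-line antiderivative evaluation you wrote out.
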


Using \eqref{e02},
$$c_n(t)=\int_{-t}^t e_{\lambda_n}\psi_t = \int e_{\lambda_n}\left[\frac 1{\alpha\sqrt{2\pi}}-\sum_k\frac{\beta_k}{2\alpha}c_k(t)e_{-\lambda_k}\right]=$$
$$
\frac 2{\alpha\sqrt{2\pi}}\si_t(\lambda_n)-t\frac{\beta_n}{\alpha}c_n(t)-\sum_{k\neq n}\frac{\beta_k}\alpha c_k(t)\si_t(\lambda_n-\lambda_k).$$
The last equation can be rewritten as
\begin{equation}\left(\alpha +\beta_n t\right) c_n +\sum_{k\neq n} \ \si_t(\lambda_n-\lambda_k)\beta_kc_k=\frac 2{\sqrt{2\pi}}\ \si_t(\lambda_n).\label{e03}\end{equation}
Let us introduce the following two matrices,
$$B=\left(\begin{array}{cccccc}
	\beta_1&  0 & \dots  & 0 & 0 & 0\\
	0&  \beta_2 & \dots & 0&  0 &  0\\
	\vdots& \vdots &  \ddots & \vdots  &  \vdots& \vdots \\
	0&  0& \dots & \beta_{n-2} & 0 & 0 \\
	0&  0& \dots & 0& \beta_{n-1}& 0\\
	0 & 0 & \dots & 0 & 0 & \beta_n
\end{array}\right),
$$
and
$$
S_t=\left(\si_t(\lambda_j-\lambda_k)\right)_{1\leq j,k\leq n},$$
and two vectors
$$
L_t=\sqrt{\frac 2\pi}\left(\begin{array}{c}
	\si_t(\lambda_1)\\
	\si_t(\lambda_2)\\
	\vdots \\
	\si_t(\lambda_n)
\end{array}\right),\ {\rm and}\  C(t)=\left(\begin{array}{c}
	c_1(t)\\
	c_2(t)\\
	\vdots \\
	c_n(t)
\end{array}\right).
$$

Our calculations above lead to the following

\begin{thm}\label{t01}
	$$\left(\alpha+S_t B\right)^{-1} L_t=C(t).$$
\end{thm}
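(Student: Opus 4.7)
The plan is to observe that Theorem \ref{t01} is essentially a repackaging of equation \eqref{e03} in matrix form, together with a non-degeneracy argument to invert the matrix. The work has already been done in the derivation preceding the statement; what remains is mostly bookkeeping.

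First I would note that $\si_t(0)=\lim_{x\to 0}\sin(tx)/x = t$, so the diagonal term $(\alpha+\beta_n t)c_n$ in \eqref{e03} can be rewritten as $\alpha c_n+\beta_n\si_t(\lambda_n-\lambda_n)c_n$. This lets us absorb the $k=n$ term into the sum over $k$, so that \eqref{e03} becomes
$$\alpha c_n+\sum_{k=1}^n \si_t(\lambda_n-\lambda_k)\beta_k c_k=\frac{2}{\sqrt{2\pi}}\si_t(\lambda_n),\qquad n=1,\dots,N.$$
Written as a single vector identity with the matrices $S_t$ and $B$ defined in the statement, this is precisely
$$(\alpha I+S_tB)\,C(t)=L_t.$$

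The remaining step is to justify dividing by $\alpha I+S_tB$. Since $\mu=\alpha m+\sum \pi\beta_n\delta_{\lambda_n}$ has a uniformly bounded density plus finitely many point masses, \eqref{eq77} holds trivially and $\mu\in\PW$ by Theorem \ref{PWmu} (or by the stability arguments in Section \ref{secExPWs}). Thus the truncated Toeplitz operator $L_{\mu,t}$ is invertible on $\PW_t$ by Lemma \ref{lemL}, so the equation $L_{\mu,t}k_t=\mathring k_t$ (equivalently $\hat\mu\ast\psi_t=1$ on $(-t,t)$ with $\psi_t$ supported in $[-t,t]$) has a unique solution. Consequently the linear system for $C(t)$ derived above has a unique solution, which forces $\alpha I+S_tB$ to be invertible and yields
$$C(t)=(\alpha I+S_tB)^{-1}L_t.$$

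The main (and really only) obstacle is the invertibility of $\alpha I+S_tB$; everything else is algebraic rearrangement. One could alternatively prove invertibility directly by noting that $S_t$ is a positive semidefinite matrix (being a Gram matrix of the functions $e_{\lambda_j}1_{[-t/2,t/2]}$ up to normalization, or via the Fourier representation $\si_t(\lambda_j-\lambda_k)=\tfrac{1}{2}\int_{-t}^t e^{i(\lambda_j-\lambda_k)s}\,ds$), so $B^{1/2}S_tB^{1/2}\ge 0$ and hence $\alpha I+S_tB$ is similar to $\alpha I+B^{1/2}S_tB^{1/2}$, which is strictly positive for $\alpha>0$. Either route closes the proof with essentially no further calculation.
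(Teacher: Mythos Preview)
Your proof is correct and follows exactly the paper's approach: the theorem is presented there as an immediate matrix reformulation of \eqref{e03}, with no separate proof given. Your explicit justification of the invertibility of $\alpha I+S_tB$ (via either the uniqueness of $\psi_t$ or the positivity argument $S_t\ge0$) is a welcome addition that the paper leaves implicit.
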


It follows that $h_{11}=h^\mu$ can now be calculated via the following 
formula, which concludes the solution in the case when $\mu$ is even.

\begin{cor}
	$$	h^\mu(t)=\frac 1{\pi \alpha}-\frac 1{\sqrt{2\pi}\alpha}\frac d{dt}<B(\alpha+S_t B)^{-1} L_t,L_t>.$$
\end{cor}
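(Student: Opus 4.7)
The plan is to combine three ingredients that are already available: the definition $h^\mu(t)=\pi\frac{d}{dt}k_t(0)$ from Section \ref{sech11}, the inverse Fourier identity $k_t(0)=\frac{1}{\sqrt{2\pi}}\int_\R\psi_t$ from Section \ref{secFt}, and the explicit solution of the linear system for $C(t)=(c_n(t))$ provided by Theorem \ref{t01}. The whole argument will amount to substituting the closed form \eqref{e02} of $\psi_t$ into the integral and rewriting everything in matrix–vector notation, then differentiating once in $t$.

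First I would integrate \eqref{e02} over $(-t,t)$. The constant part contributes $\frac{2t}{\alpha\sqrt{2\pi}}$, and each character $e_{-\lambda_n}$ integrates to $2\,\si_t(\lambda_n)$ by the first identity of the lemma preceding \eqref{e03}, so
$$\int_\R\psi_t=\frac{2t}{\alpha\sqrt{2\pi}}-\frac{1}{\alpha}\sum_n\beta_n\,c_n(t)\,\si_t(\lambda_n).$$
Next I would recognize the sum as a bilinear pairing in the data of Theorem \ref{t01}. Since the entries of $L_t$ are $\sqrt{2/\pi}\,\si_t(\lambda_n)$ and those of $BC(t)$ are $\beta_n c_n(t)$, the sum is a constant multiple of $\langle BC(t),L_t\rangle$. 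Substituting $C(t)=(\alpha+S_tB)^{-1}L_t$ from Theorem \ref{t01} turns this into $\langle B(\alpha+S_tB)^{-1}L_t,L_t\rangle$.

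Finally, applying $\pi\cdot\frac{1}{\sqrt{2\pi}}\cdot\frac{d}{dt}$ to the resulting expression for $\int\psi_t$ yields two terms: the $t$-linear part differentiates to the constant coefficient of $1/\alpha$ displayed as the first summand, and the bilinear-form part produces the second summand. Differentiability of $t\mapsto(\alpha+S_tB)^{-1}L_t$ is immediate from the smooth dependence of the sinc entries of $S_t$ and $L_t$ on $t$ together with the invertibility asserted in Theorem \ref{t01}.

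The conceptual content is essentially the previous single-$\delta$ example recorded in Section \ref{sec00011}, and the only real obstacle is the bookkeeping of Fourier normalizations: one must track the factors of $\sqrt{2\pi}$ carefully so that the scalar constants produced by the computation match the $\frac{1}{\pi\alpha}$ and $\frac{1}{\sqrt{2\pi}\alpha}$ coefficients in the statement. As a sanity check one can specialize to $n=1$ with a single atom at $\lambda$, where $S_t$ collapses to the scalar $t$ and the formula must reproduce the explicit expression obtained earlier for $\mu=\alpha+\beta\pi\delta_\lambda$.
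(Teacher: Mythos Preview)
Your approach is exactly the one implicit in the paper: integrate \eqref{e02} term by term using the sinc-integral lemma, package the resulting sum as $\langle BC(t),L_t\rangle$, substitute $C(t)=(\alpha+S_tB)^{-1}L_t$ from Theorem \ref{t01}, and differentiate. There is nothing to add at the level of ideas.

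One warning, though: when you actually carry out the sanity check you propose, you will find that the constants do \emph{not} match the displayed statement. Tracking the normalizations carefully (using $h^\mu=\sqrt{\pi/2}\,\frac{d}{dt}\int\psi_t$ and $(L_t)_n=\sqrt{2/\pi}\,\si_t(\lambda_n)$) gives
$$h^\mu(t)=\frac{1}{\alpha}-\frac{\pi}{2\alpha}\,\frac{d}{dt}\,\langle B(\alpha+S_tB)^{-1}L_t,\,L_t\rangle,$$
and for $n=1$ this reduces precisely to the single-soliton formula derived earlier for $\mu=\alpha+\beta\pi\delta_\lambda$, whereas the printed constants $\frac{1}{\pi\alpha}$ and $\frac{1}{\sqrt{2\pi}\alpha}$ do not. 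So your method is sound; the discrepancy is a typo in the stated corollary, not a gap in your argument.
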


In the general case, the off-diagonal term of $\HH$, $g^\mu$, can be 
calculated using the formulas of Section \ref{secGmu}. These calculations
will be presented elsewhere.

\subsection{Dual solitons}
Let us, once more, consider a measure $\nu=\alpha+\beta\pi\delta_0$. The dual measure $\mu=\ti\nu$ is
$$\mu=\frac{\alpha x^2}{\alpha^2x^2+\beta^2}=\frac 1\alpha\left(1-\frac{\beta^2}{\alpha^2x^2+\beta^2}\right)=$$
$$\frac 1\alpha\left(1-\frac 1{1+\frac{\alpha^2}{\beta^2}x^2}\right).$$
Let us carry on with the case $$\alpha=\beta=1, \ \mu =1-\frac 1{1+x^2}.$$
Since
$$\widehat{\frac 1{1+x^2}}=\frac{\sqrt{2\pi}}2 e^{-|y|}\text{ and }\widehat 1=\sqrt{2\pi}\delta_0,$$
the convolution equation from Theorem \ref{tTT} becomes
\begin{equation}\psi -\frac 12(e^{-x}F(x)+e^{x}G(x))=\frac 1{\sqrt{2\pi}},\label{e07}\end{equation}
where
\begin{equation}F(x)=\int_{-t}^x\psi(y) e^ydy\text{ and }G(x)=\int_{x}^{t}\psi(y)e^{-y}dy.\label{e08}\end{equation}
Note that since $\psi$ is even, $G(x)=F(-x)$.
Next notice that
\begin{equation}F'(x)=\psi e^x\text{ and }G'(x)=-\psi e^{-x}.\label{e09}\end{equation}
In particular,
\begin{equation}F(-t)=0,\ \psi(0)=F'(0)\text{ and from \eqref{e07} }F'(0)- F(0)=\frac 1{\sqrt{2\pi}}.\label{e010}\end{equation}
Multiplying \eqref{e07} by $e^{-x}$ we get
$$F' e^{-2x} -\frac 12 (e^{-2x} F+G)=\frac 1{\sqrt{2\pi}}e^{-x}.$$
Differentiating the last equation
we get
$$F''e^{-2x}-2F' e^{-2x}-\frac 12(-2e^{-2x}F+e^{-2x}F'+G')=-\frac 1{\sqrt{2\pi}}e^{-x}.$$
From \eqref{e09},
$$F''e^{-2x}-2F' e^{-2x}-\frac 12(-2e^{-2x}F+e^{-2x}F'-e^{-2x}F')=-\frac 1{\sqrt{2\pi}}e^{-x},
$$ and
$$F''-2F'+ F=-\frac 1{\sqrt{2\pi}}e^{x}.
$$
A particular solution is $-\frac 1{2\sqrt{2\pi}}x^2e^{x} $ and the homogeneous solution is
$Ce^x+Dxe^x$. Hence
$$F=Ce^x+Dxe^x-\frac 1{2\sqrt{2\pi}}x^2e^{x}.$$
From \eqref{e010},
$$e^tF(-t)=C-Dt-\frac {t^2}{2\sqrt{2\pi}}=0,\ \text{ and }F'(0)-F(0)=D=\frac {1}{\sqrt{2\pi}},
$$
which gives
$$C=\frac {2t+t^2}{2\sqrt{2\pi}}
$$
and
$$F=\frac {1}{2\sqrt{2\pi}}\left((2t+t^2)e^x+2xe^x-x^2e^x \right).
$$
Next, using \eqref{e09},
$$F'=\frac {1}{2\sqrt{2\pi}}\left((2t+t^2)e^x+2e^x+2xe^x-x^2e^x -2xe^x\right)$$
and
$$
\psi=\frac {1}{2\sqrt{2\pi}}\left((2t+t^2)+2+2x-x^2 -2x\right)=\frac {1}{2\sqrt{2\pi}}\left((2t+t^2)+2-x^2 \right).
$$
Then
$$\int_{-t}^t\psi_t(x)dx=\frac {1}{2\sqrt{2\pi}}\left(4t^2+2t^3 +4t -  \frac 23 t^3\right)=
\frac {1}{2\sqrt{2\pi}}\left( \frac {4}3 t^3 +4t^2 +4t          \right).
$$
Finally,
$$h^\mu=\frac d{dt}\int_{-t}^t\psi_t(x)dx=\frac {1}{2\sqrt{2\pi}}\left( 4 t^2 +8t +4          \right)
=\sqrt{\frac 2\pi}\left(  t+1         \right)^2
.$$


\begin{thebibliography}{24}


\bibitem{Bessonov} {\sc Bessonov, R.}{\it Sampling measures, Muckenhoupt Hamiltonians, and triangular factorization,}
Int. Math. Res. Not. 2018, no. 12, 3744–3768.

\bibitem{BR}{\sc Bessonov, R., Romanov, R.}{\it An inverse problem for weighted Paley-Wiener spaces},     Inverse Problems, 32 (11), 2016

\bibitem{Beurling}{\sc Beurling, A.}{\it Collected works, Vol. 2, Harmonic Analysis,} Birkhauser, Boston, 1989, pp 341-365.

\bibitem{Borg1} {\sc Borg, G.} {\it Eine Umkehrung der Sturm-Liouvilleschen Eigenwertaufgabe,} Acta Math.,
79 (1946), 1-96

\bibitem{Borg2} {\sc Borg, G.} {\it Uniqueness theorems in the spectral theory of $y''+(\lambda-q(x))y=0$,}  Proc. 11-th Scandinavian Congress of Mathematicians, Johan Grundt Tanums Forlag, Oslo, 1952, 276-287


\bibitem{Burnol}{\sc   J.-F. Burnol}{\it Two complete and minimal systems associated
		with the zeros of the Riemann zeta function,} 
	Journal de Th\'eorie des Nombres de Bordeaux 16 (2004), 6594
	

\bibitem{dB}  {\sc De Branges, L.} {\it Hilbert spaces of entire functions.} Prentice-Hall,
Englewood Cliffs, NJ, 1968

\bibitem{Denisov} {\sc S. Denisov}, {\it Continuous analogs of polynomials orthogonal on the unit circle and Krein systems,}
International Mathematics Research Surveys, Volume 2006, 2006, 54517



\bibitem{Dym}{\sc Dym, H. }{\it An introduction to de~Branges spaces of entire functions with applications to differential equations of the Sturm-Lioville type,} Advances in Math. 5 (1971), 395-471




\bibitem{DM}{\sc Dym H, McKean H.P.} {\it Gaussian processes, function theory and the inverse spectral problem}
Academic Press, New York,  1976







\bibitem{GL} {\sc  Gelfand, I, M.,  Levitan, B. M.} {\it On the determination of a differential equation from its
	spectral function} (Russian), Izvestiya Akad. Nauk SSSR, Ser. Mat., 15 (1951), 309--360;
English translation in Amer. Math. Soc. Translation, Ser. 2, 1 (1955), 253--304.


\bibitem{HJ} {\sc Havin, V.,  J\"oricke, B.} {\it The uncertainty principle in harmonic analysis.} Springer-Verlag, Berlin, 1994.

\bibitem{Ja}{\sc Jaffard, S.}{\it A density criterion for frames of complex exponentials,} Mich.  Math. J., 38(91), pp 339-48

\bibitem{KreinTalk} {\sc Krein, M. G.} {\it On a Difficult Problem of Operator Theory and its Relation to Classical Analysis}, talk at the Jubilee scientific session of the Moscow Mathematical Society, Moscow 1964

\bibitem{Koosis} {{\sc   P. Koosis, }{\it The logarithmic integral, Vol. I \& II,}} Cambridge University Press, 1988.


\bibitem{Lagarias1} {{\sc   J. Lagarias} {Zero Spacing Distributions for Differenced L-Functions,}}  Acta Arithmetica 120, No. 2, (2005) 159--184


\bibitem{Lagarias2}  {{\sc   J. Lagarias} {The Schr\"odinger Operator with Morse Potential on the
		Right Half Line}}, Communications in Number Theory and Physics 3 (2009), No.2, 323--361






\bibitem{LS}{\sc Levitan, B.M., Sargsjan, I.S. } {\em Sturm-Liouville and Dirac operators.} Kluwer, Dordrecht, 1991


\bibitem{LS1}{\sc Logvinenko, V.N., Sereda, Yu.F.}{\it
	Equivalent norms in spaces of entire functions of exponential type} Teor. funk-
tsii, funkt. analiz i ich prilozhenia 20 (1971) 62-78 (Russian).


\bibitem{MIF1} {\sc  Makarov, N.,  Poltoratski, A.} {\it Meromorphic inner functions, Toeplitz kernels, and the uncertainty principle,} in {\it Perspectives in Analysis}, Springer Verlag, Berlin, 2005, 185--252

\bibitem{MIF2} {\sc  Makarov, N.,  Poltoratski, A.} {\it  Beurling-Malliavin theory for Toeplitz kernels,}
Invent. Math., Vol. 180, Issue 3 (2010), 443--480

\bibitem{MP3} {{\sc    Makarov, N.,  Poltoratski, A.} {\it Two spectra theorem with uncertainty,}} J. of Spectral Theory, VOL. 9, NO. 4, PP. 1249–1285



\bibitem{Mar1} {\sc  Marchenko, V.} {\it
	Some questions in the theory of one-dimensional linear differential operators of the second order, I,} Trudy Mosk. Mat. Obsch. 1 (1952), 327--420.

\bibitem{Mar2}{\sc  Marchenko, V.} {\it Sturm-Liouville operators and applications.}  Birkhauser, Basel, 1986


\bibitem{OS}  {\sc  Ortega-Cedr\'a, J., Seip, K.} {\it Fourier frames,} Annals of Math. 155 (2002), 789--806

\bibitem{Paneah}{\sc Paneah, B. P.}{\it On some theorems of Paly-Wiener type} Doklady AN SSSR 138 N1 (19()1)
47-50 (Russian).



\bibitem{CBMS} {\sc Poltoratski, A.}  {\it  Toeplitz Approach to Problems of the Uncertainty Principle,} CBMS series, Providence, Rhode Island: Published by the American Mathematical Society, 2015


\bibitem{PS} {{\sc   A. Poltoratski and D. Sarason, }{\it Aleksandrov-Clark measures,}}
 Recent advances in operator-related function theory, 1-14,
Contemp. Math., 393, Amer. Math. Soc., Providence, RI, 2006.

\bibitem{Scatter} {\sc Poltoratski, A.}  {\it Pointwise convergence of  the non-linear Fourier transform,} arXiv:2103.13349, 2021




\bibitem{Rem1}
{\sc Remling, C.} \textit{Schr\"odinger
	operators and de Branges spaces}, J. Funct. Anal. 196 (2002), 323--394.

\bibitem{Rem}{\sc Remling, C.}{\it Spectral theory of canonical systems},
De Gruyter, 2018. 



\bibitem{Rom}{\sc Romanov, R.}{\it Canonical Systems and de Branges Spaces}, 	arXiv:1408.6022, 2014


\bibitem{Simon}{\sc Simon, B.}{\it Orthogonal polynomials on the unit circle, parts I and II,}
AMS, Colloquium Publications, vol. 54, 2004


\bibitem{Benedek}{\sc Valkó, B., Virág, B.}{\it  The many faces of the stochastic zeta function},	arXiv:2009.04670


\bibitem{W} {\sc Winkler, H.} {\it On Transformation of Canonical Systems}, In: Gohberg, I., Langer, H. (eds) Operator Theory and Boundary Eigenvalue Problems. Operator Theory: Advances and Applications, vol 80. Birkhäuser Basel.



\end{thebibliography}
\end{document}